\newcolumntype{P}[1]{>{\centering\arraybackslash}p{#1}}
\newcolumntype{M}[1]{>{\centering\arraybackslash}m{#1}}
\newcommand{\done}{\item[\checkmark]}
\newcommand{\crossed}{\item[$\times$]}
\newtheorem{theorem}{Theorem}[section]
\newtheorem{lemma}[theorem]{Lemma}
\newtheorem{remark}{Remark}[section]
\newtheorem{corollary}[theorem]{Corollary}
\newtheorem{definition}[theorem]{Definition}
\newcommand{\set}[1]{{\dis\left\{#1\right\}}}
\newcommand{\norm}[2]{||{#1}||_{#2}}
\def\Om{{\Omega}}
\def\om{{\omega}}
\def\Re{\operatorname{{Re}}}
\def\Im{\operatorname{{Im}}}
\def\be{\begin{equation}}
\def\ee{\end{equation}}
\def\bq{\begin{eqnarray}}
\def\eq{\end{eqnarray}}
\def\bqs{\begin{eqnarray*}}
	\def\eqs{\end{eqnarray*}}
\def\wa{\begin{aligned}}
	\def\ali{\end{aligned}}
\def\ben{\begin{enumerate}}
	\def\enu{\end{enumerate}}
\def\bmu{\begin{multicols}}
	\def\emu{\end{multicols}}
\def\dis{\displaystyle}
\begin{document}

\begin{abstract}
In this paper, we study the defocusing nonlinear Schr\"{o}dinger equation with a locally distributed damping on a smooth bounded domain as well as on the whole space and on an exterior domain.  We first construct approximate solutions using the theory of monotone operators.  We show that approximate solutions decay exponentially fast in the $L^2$-sense by using the multiplier technique and a unique continuation property.  Then, we prove the global existence as well as the $L^2$-decay of solutions for the original model by passing to the limit and using a weak lower semicontinuity argument, respectively.  The distinctive feature of the paper is the monotonicity approach, which makes the analysis independent from the commonly used Strichartz estimates and allows us to work without artificial smoothing terms inserted into the main equation.
We in addition implement a precise and efficient algorithm for studying the exponential decay established in the first part of the paper numerically.  Our simulations illustrate the efficacy of the proposed control design.
\end{abstract}
	\author[Cavalcanti]{M.M. Cavalcanti }
	\address{ Department of Mathematics, State University of Maring\'a,
		87020-900, Maring\'a, PR, Brazil.}
	\email{mmcavalcanti@uem.br}
	\thanks{M.M. Cavalcanti's research was partially supported by the CNPq
		Grant 300631/2003-0}
	
	\author[Corr\^{e}a]{W.J. Corr\^{e}a}
	\address{ Federal Technological University of Paran\'{a}, Campus Campo Mour\~{a}o, 87301-006, Campo Mour\~{a}o, PR, Brazil.}
	\email{wcorrea@utfpr.edu.br}
        \thanks{W. J. Corrêa’s research was partially supported by the CNPq Grant
          438807/2018-9.}

	\author[\"{O}zsar\i]{T. \"{O}zsar\i}
	\address{Department of Mathematics,
	  Izmir Institute of Technology,
	  Izmir, 35430 Turkey}
	\email{turkerozsari@iyte.edu.tr}
	\thanks{T. Özsarı's research was supported by TÜBİTAK 1001 Grant \#117F449}

        \author[Sep\'ulveda]{M. Sep\'ulveda}
	\address{Centro de Investigación en Ingeniería Matemática \& Departamento de Ingenier\'ia matem\'atica, Universidad de Concepción, Chile
	}
        \email{mauricio@ing-mat.udec.cl}

	\author[V\'ejar]{R. V\'ejar-Asem }
	\address{Centro de Investigación en Ingeniería Matemática \& Departamento de Ingenier\'ia matem\'atica,, Universidad de Concepción, Chile
	}
        \email{rodrigovejar@ing-mat.udec.cl}
        \thanks{R. Véjar Asem acknowledges support by CONICYT-PCHA/Doctorado Nacional/2015-21150799.}
	
	\title[Stabilization of dNLS]{Exponential stability for the nonlinear Schr\"{o}dinger equation with locally distributed damping}
	
	\maketitle

	
	\section{Introduction}
	
	\setcounter{equation}{0}
	
	This paper is concerned with the stabilization of defocusing nonlinear Schr\"{o}dinger equations (dNLS)
	\begin{eqnarray}
	\label{problema}
	\begin{cases}
	i\,\partial_t y+\Delta
	y-|y|^p\,y+i\,a(x)\,y=0\quad \hbox{ in } \Omega\,\times (0,T),\\
	y(0)=y_0 \quad\hbox{ in }\Omega, \end{cases}
	\end{eqnarray} where $\Omega$ is a general domain, and $a$ is a nonnegative function that may vanish on some parts of the domain.  We first study  (dNLS) on a bounded domain $ \Omega $ in $\mathbb{R}^N$ with boundary $\Gamma $ of class $C^2$.  In this case we assume $y=0 \text{ on }\Gamma.$  Then, we extend the theory to unbounded domains in the particular cases $ \Omega=\mathbb{R}^N $ and $ \Omega $ being an exterior domain.

The nonlinear Schrödinger equation (NLS), central to classical field theory, gained fame when its one dimensional version was shown to be integrable in \cite{zak71}.  Contrary to its linear type, it does not describe the time evolution of a quantum state \cite{guo}.  It is rather used in other areas such as the transmission of light in nonlinear optical fibers and planar wavequides, small-amplitude gravity waves on the surface of deep inviscid water, Langmuir waves in hot plasmas, slowly varying packets of quasi-monochromatic waves in weakly nonlinear dispersive media, Bose-Einstein condensates, Davydov's alpha-helix solitons, and plane-diffracted wave beams in the focusing
regions of the ionosphere (see for instance \cite{Sulem99}, \cite{mal}, \cite{pitaevskii}, \cite{bala85}, and \cite{Gurevich}).

The NLS model without a damping term can describe an evolution without any mass and energy loss such as a laser beam propagated in the Kerr medium with no power losses.   However, it is always true that some absorption by the medium is indispensable even in the visible spectrum \cite{Fibich15}.  The effect of the absorption can be modelled by adding a linear (e.g., $iay$, $a>0$) or nonlinear (e.g., $ia|y|^qy$, $a>0$, $q>0$) damping term into the model, depending on the physical situation. A localized damping, where the damping coefficient $a=a(x)$ depends on the spatial coordinate, can be used to obtain better physical information by distinguishing the spatial region where the absorption takes place or is detected, due to for example some impurity in the medium, from the rest of the domain.
\subsection{Assumptions}  Throughout the paper (without any restatement) we will assume the following:
The power index $ p$ can be taken as any positive number. The nonnegative real valued function $a(\cdot)\in W^{1,\infty}(\Om)$ represents a localized	dissipative effect.

	\noindent If $\Omega$ is a bounded domain we will assume that
$a$ satisfies the geometric condition $a(x)\geq\,a_0>0$ (for some fixed $a_0\in \mathbb{R}_+$) for a.e. $x$ on a subregion $\omega\subset \Omega$  that contains $\overline{\Gamma(x^0)}$, where
	\begin{equation}\label{gamma0}
	\Gamma(x^0)=\{x\in\,\Gamma:\, m(x)\,\cdot\,\nu(x)>\,0\}.
	\end{equation} Here, $m(x):=x-x^0$ ($x_0\in \mathbb{R}^N$ is some fixed point), and $\nu(x)$
	represents the unit outward normal vector at the point $x\in \Gamma$.
	
	On the other hand, if $\Omega$ is the whole space, we assume $a(x) \geq a_0 >0$ in $\mathbb{R}^N \backslash B_{R^\prime}$, where $B_{R^\prime}$ represents a ball of radius $R^\prime>0$. We assume the same if $ \Omega $ is an exterior domain: { $ \Omega:=\mathbb{R}^N \setminus \mathcal{O}, $  where  $\mathcal{O} \subset\subset B_{R'}$ being $\mathcal{O}$ a compact star-shaped obstacle, namely, the following condition is verified:  $m(x)\cdot \nu(x) \leq 0$ on $ \Gamma_0,   $ where $\Gamma_0$  is the boundary of the obstacle $ \mathcal{O} $ which is }     smooth and associated with Dirichlet boundary condition  as in Lasiecka et al. \cite{las03}. {In this case, the observer $ x_0 $ must be taken in the interior of the obstacle $\mathcal{O}$. Regarding to the localized dissipative effect,  we consider $a(x) \geq  a_0 > 0$ in $\Omega\backslash B_{R'}$.}
	
	Moreover, {in all cases}, we assume that the damping coefficient $ a(\cdot) $ satisfies:
		\begin{equation}
		\label{anabla}|\nabla\,a(x)|^2 \lesssim \, a(x),\,\forall\,x\in\,\Omega.
		\end{equation}  The above assumption on the function $a(\cdot)$ was used for the wave equation with Kelvin-Voight damping; see for instance Liu \cite[Remark 3.1]{Liu} and Burq and Christianson \cite{Burq15}.
\begin{remark}
   The assumption $p>0$ is in parallel with the general theory of defocusing nonlinear Schrödinger equations when the initial datum is considered at the $H^1$-level.  On the contrary, it is well known that solutions of the focusing nonlinear Schrödinger equation (fNLS) may blow-up if $p\ge 4/N$ even in the presence of a weak damping acting on the whole domain for arbitrary initial data.  The main result of this paper can be extended to the case of the focusing problem via a Gagliardo-Nirenberg argument for the allowable range $p<4/N$.  The critical case $p= 4/N$ can also be treated with a smallness condition on the initial datum.  These are rather classical arguments and will be omitted here.
\end{remark}
%
%
\subsection{A few words on the previous work}
The stabilization problem for the linear and nonlinear Schrödinger equations (NLS) received significant attention in the last three decades. Tsutsumi \cite{Tsutsumi3} studied the stabilization of the weakly damped NLS posed on a bounded domain at the energy and higher levels. His results were extended to the weakly damped NLS posed on a bounded domain subject to inhomogeneous Dirichlet/Neumann boundary conditions in a series of papers by Özsarı  et al. \cite{Ozsar}, Özsarı \cite{Ozsari2}, \cite{Ozsari3}, and to the weakly damped NLS posed on the half-line subject to nonlinear boundary sources by Kalantarov \& Özsarı \cite{Kalantarov}.  In addition, Lasiecka \& Triggiani \cite{Lasiecka5} proved the exponential stability at the $L^2-$level for the linear Schrödinger equation with a nonlinear boundary dissipation.

\smallskip

In all of the work mentioned above, damping was assumed to be effective on the whole domain.  However, there has also been some progress regarding the stabilization with only a localized internal damping.  The stabilization problem in $L^2-$topology for the defocusing Schr\"odinger equation with a localized damping of the form $ia(x) y$ on the whole Euclidean space in dimensions one and two were treated by Cavalcanti et al. \cite{Cavalcanti}, \cite{Cavalcanti2},  \cite{Cavalcanti0}, and Natali \cite{Natali}, \cite{Natali1}.  Cavalcanti et al \cite{Cavalcanti4} considered an analogous structure of damping for the defocusing Schr\"odinger posed in a two dimensional compact Riemannian manifold without boundary.
Dehman et al. \cite{Dehman} studied the stabilization of the energy solutions for the defocusing cubic Schr\"odinger equations with a locally supported damping on a two dimensional boundaryless compact Riemannian manifold as well. For this purpose, the authors considered a damping term given by $i a(x) (I-\Delta)^{-1}a(x)\partial_t y$. Similar results on three dimensional compact manifolds were obtained by Laurent \cite{Laurent}.  Bortot et al. \cite{Bortot}  established uniform decay rate estimates for the Schr\"odinger equation posed on a compact Riemannian manifold subject to a locally distributed nonlinear damping. Bortot \& Cavalcanti \cite{Bortot1} extended these results to connected, complete and noncompact Riemannian manifolds.  Rosier \& Zhang \cite{Rosier1} obtained the local stabilization of the semilinear Schrödinger equation posed on $n$-dimensional rectangles. Burq \& Zworski \cite{burq2017rough} studied the exponential decay of the linear problem on 2 - Tori at the $ L^2 $ - level. In addition, we would like to cite Aloui et al. \cite{Aloui2}, who obtained the uniform stabilization of the strongly dissipative linear Schr\"odinger equation, and the recent work of Bortot \& Corr\^ea \cite{Bortot2017} for the treatment of the corresponding nonlinear model. It is worth mentioning that in  \cite{Aloui2} and \cite{Bortot2017} the authors considered a strong damping given by the structure $i a(x) (-\Delta)^{1/2} a(x) y$ which provides a local smoothing effect that was crucial in their proof.

\smallskip
\subsection{Motivation}
The main goal of the present paper is to achieve stabilization with the (natural) weaker dissipative effect $i a(x) y$ instead of relying on a strong dissipation such as $i a(x) (-\Delta)^{1/2} a(x) y$.  It will turn out that the assumption \eqref{anabla} enables us to avoid using such strong dissipation. We want to achieve stabilization in all dimensions $N\ge 1$ and for all power indices $p>0$.  For this purpose, we first construct approximate solutions to problem (\ref{probF2u}) by using the theory of monotone operators.  We show that these approximate solutions decay exponentially fast in the $L^2$-sense by using the multiplier technique and a unique continuation property.  Then, we prove the global existence as well as the $L^2$-decay of solutions for the original model by passing to the limit and using a weak lower semicontinuity argument, respectively. Here it should be noted that our nonlinear structure $ f(| y|^ 2)y $ ($f(s)=s^{p/2}$) is much more general than those treated to date in the context of stabilization with a locally supported damping. The current paper complements the work of Aloui et al. \cite{Aloui2} on unbounded domains, because we prove the global exponential decay for dNLS, while  \cite{Aloui2} obtained only a local exponential decay in the linear setting. In addition, we implement a precise and efficient algorithm for studying the exponential decay established in the first part of the paper numerically.  Our simulations illustrate the efficacy of the proposed control design.

\subsection{Main result}
 We adapt to the following notion of weak solutions for problem (\ref{problema}).
	 	 \begin{definition}\label{def1}
	 	 	Let $y_0\in L^2(\Omega)$ and set $\mathcal{X}=H_0^1(\Omega)\cap L^{p+2}(\Omega)$. Then,  $y\in\,L^{\infty}(0,T;\mathcal{X})\,\cap\,C([0,T];L^2(\Omega))$
	 	 	is said to be a \textit{weak solution} of problem (\ref{problema}) if $y$ satisfies $y(0,\cdot)=y_0(\cdot)$ in $L^2(\Omega)$, and
 	\begin{eqnarray}
	 	 	\label{weak1}&&\int_0^T \left[-( y(t),\partial_t\,\varphi(t))_{L^2(\Omega)}+i\,(\nabla\,y(t),\nabla\varphi(t))_{L^2(\Omega)}\right]\,dt\\
	 	 	\nonumber&&+i\int_0^T\left[\langle|\,y(t)\,|^{p}\,y(t),\varphi(t)\rangle_{L^{(p+2)'}(\Om);\, L^{p+2}(\Om)}-i(a(x)\,y(t),\varphi(t))_{L^2(\Omega)}\right]
	 	 	\,dt=0
	 	 	\end{eqnarray} for all $\varphi\in\,C_0^{\infty}(0,T;\mathcal{X})$.
	 	 \end{definition}
	 	 The mass functional for the defocusing NLS is given by $E_0(y(t)):=\frac12\,||y(t)||_{L^2\left(\Omega\right)}^2.$		 	
	 	 \begin{theorem}[Existence and stabilization]\label{theorem 2.1}
	 	 	Let $y_0\in \mathcal{X}=H_0^1(\Omega)\cap L^{p+2}(\Omega)$. Then, (\ref{problema}) admits a weak solution $y$ in the sense of Definition \ref{def1}.  Moreover, there are $C,\gamma>0$ (depending on $\norm{y_0}{H_0^1(\Omega)}$) such that the following exponential decay rate estimate
	 	 	\begin{eqnarray*}
	 	 		E_0(y(t)) \leq C e^{-\gamma t}E_0(y_0),t\geq T_0,
	 	 	\end{eqnarray*}  holds true for this weak solution provided $T_0>0$ is sufficiently large.
	 	 \end{theorem}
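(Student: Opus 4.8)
The plan is to establish the theorem in three stages: (i) construct regular approximate solutions $y_\varepsilon$ of a truncated version of \eqref{problema}; (ii) prove the exponential decay directly for the $y_\varepsilon$ by combining a mass-dissipation identity, a localized multiplier (observability) estimate, and a unique continuation argument; and (iii) pass to the limit $\varepsilon\to0$ to recover a weak solution in the sense of Definition \ref{def1} and, via weak lower semicontinuity, to transfer the decay estimate to it. For stage (i) I would replace the nonlinearity $f(|y|^2)y=|y|^py$ by a globally Lipschitz truncation $f_\varepsilon(|y|^2)y$ (for instance $f_\varepsilon(s)=\min\{s,\varepsilon^{-1}\}^{p/2}$), chosen so that the stationary/resolvent problem associated with the implicit time discretization is governed by a maximal monotone, coercive operator and can be solved by the theory of monotone operators. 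The defocusing sign of $f$ and the nonnegativity of $a$ are exactly what drive monotonicity and coercivity here, and they also furnish a priori bounds, uniform in $\varepsilon$, in $L^\infty(0,T;\mathcal X)$ through the mass and energy identities; the regularity of the $y_\varepsilon$ is what makes the subsequent multiplier computations rigorous.

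The core of the argument is the decay for $y_\varepsilon$. Testing the equation with $\bar y_\varepsilon$ and taking real parts gives the mass-dissipation law
\[
\frac{d}{dt}E_0(y_\varepsilon)=-\int_\Omega a(x)\,|y_\varepsilon|^2\,dx\le 0 .
\]
Next I would run the multiplier method with the main multiplier $h(x)\,m(x)\cdot\nabla\bar y_\varepsilon$, where $m(x)=x-x^0$ and $h$ is a smooth cutoff equal to $1$ on a neighborhood of $\overline{\Gamma(x^0)}$ and supported in $\omega$, supplemented by the lower-order multipliers $\bar y_\varepsilon$ and $(\Div m)\,\bar y_\varepsilon$, and then take the appropriate real/imaginary parts of the resulting Rellich--Morawetz-type identities. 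The Dirichlet condition $y_\varepsilon=0$ on $\Gamma$ together with the geometric condition $m\cdot\nu>0$ on $\Gamma(x^0)$ gives the boundary contributions a favorable sign, while the terms produced by the damping that carry a factor $\nabla a$ are absorbed by assumption \eqref{anabla}: since $|\nabla a|\lesssim a^{1/2}$, each such term is bounded by $\big(\int a|y_\varepsilon|^2\big)^{1/2}$ times an $H^1$ quantity and is therefore controlled by the dissipation itself. This is precisely where \eqref{anabla} replaces the strong smoothing damping $ia(-\Delta)^{1/2}a\,y$ of earlier works. The outcome is an observability inequality
\[
\int_0^{T} E_0(y_\varepsilon(t))\,dt\le C\int_0^{T}\!\!\int_\Omega a(x)\,|y_\varepsilon|^2\,dx\,dt+C\,(\mathrm{l.o.t.}),
\]
where the lower-order terms involve a norm of $y_\varepsilon$ strictly weaker than $L^2(0,T;L^2(\omega))$.

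To absorb the lower-order terms I would use a compactness--uniqueness scheme: if the inequality failed with the l.o.t.\ deleted, a suitably normalized sequence of solutions would converge weakly to a limit $z$ solving the Schrödinger equation with the potential $V:=|z|^p$ (controlled in a Lebesgue space by the $\mathcal X$-bound) and satisfying $az\equiv0$, hence $z\equiv0$ on $\omega\times(0,T)$; a unique continuation property for the Schrödinger operator then forces $z\equiv0$, contradicting the normalization once compactness guarantees the limit is nontrivial. With the l.o.t.\ absorbed, combining the observability inequality with the dissipation law and using that $E_0(y_\varepsilon)$ is nonincreasing (so $\int_0^{T_0}E_0\ge T_0\,E_0(T_0)$) yields $E_0(y_\varepsilon(T_0))\le\kappa\,E_0(y_\varepsilon(0))$ with $\kappa=\kappa(T_0)<1$ for $T_0$ large; iterating over $[mT_0,(m+1)T_0]$ via the flow property gives $E_0(y_\varepsilon(t))\le Ce^{-\gamma t}E_0(y_\varepsilon(0))$ with constants independent of $\varepsilon$ and depending on the initial data only through the uniform $\mathcal X$-bound, i.e.\ through $\norm{y_0}{H_0^1(\Omega)}$. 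I expect this unique continuation step to be the main obstacle: it must hold for all $N\ge1$ and all $p>0$ with only $L^\infty(0,T;\mathcal X)$ regularity, so the potential $V=|y|^p$ lies merely in a Lebesgue space, and one needs a Carleman-based UCP (and, on unbounded domains, a delicate compactness argument) robust enough to cover this generality.

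Finally, the uniform bounds give, along a subsequence, $y_\varepsilon\rightharpoonup y$ weakly-$*$ in $L^\infty(0,T;\mathcal X)$; the equation bounds $\partial_t y_\varepsilon$ in a negative-order space, so the Aubin--Lions lemma yields strong convergence in $L^2_{loc}$ and convergence a.e., which suffices to identify the limit of $f_\varepsilon(|y_\varepsilon|^2)y_\varepsilon$ with $|y|^py$ and to pass to the limit in the weak formulation \eqref{weak1}. Together with $y\in C([0,T];L^2(\Omega))$ and $y(0)=y_0$ this shows $y$ is a weak solution in the sense of Definition \ref{def1}. The decay transfers to $y$ by weak lower semicontinuity of the $L^2$-norm: for a.e.\ $t$,
\[
E_0(y(t))\le\liminf_{\varepsilon\to0}E_0(y_\varepsilon(t))\le Ce^{-\gamma t}\liminf_{\varepsilon\to0}E_0(y_\varepsilon(0))=Ce^{-\gamma t}E_0(y_0),
\]
which is the asserted rate for $t\ge T_0$.
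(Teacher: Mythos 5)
Your overall skeleton --- globally Lipschitz approximation via monotone operator theory, exponential decay for the approximations, passage to the limit by Aubin--Lions, and transfer of the decay by weak lower semicontinuity --- is exactly the paper's strategy (the paper uses the Yosida approximations $\mathcal{B}_n=\mathcal{B}J_n$ of the m-accretive operator $\mathcal{B}y=|y|^py$ rather than a pointwise truncation, but that difference is cosmetic). Two of your intermediate steps, however, deviate in ways that matter. First, the Morawetz/Rellich multiplier $h(x)\,m(x)\cdot\nabla\bar y_\varepsilon$ is not needed and is not what the paper means by ``multiplier technique'': the paper only tests with $\bar y_n$ (mass dissipation) and with $-\Delta\bar y_n+\overline{\mathcal{B}_n(y_n)}$ (to get the uniform $H^1$ bound, which is where \eqref{anabla} actually enters --- it controls the commutator term $\Re(\nabla y_n,\nabla a\,y_n)$ so that the $H^1$ norm stays bounded by $\|y_0\|_{H_0^1}$, not to absorb boundary or interior terms in a Morawetz identity). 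The observability step is then the trivial splitting $\int_0^TE_{n,0}=\frac12\int_0^T\int_\omega+\frac12\int_0^T\int_{\Omega\setminus\omega}$ as in \eqref{energyest}, with the second piece handled by the compactness--uniqueness Lemma \ref{lema1}. Your multiplier route could in principle work, but it is extra machinery and you would still face the same absorption problem afterwards.

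The genuine gap is in your compactness--uniqueness/unique continuation step, and you half-acknowledge it yourself. As you set it up, the contradicting normalized sequence converges to a limit $z$ solving the \emph{fully nonlinear} Schr\"odinger equation with potential $V=|z|^p$ lying merely in a Lebesgue space, and you then need (a) to identify the weak limit of the nonlinear term along that sequence, which on a general domain in dimension $N\ge 3$ with arbitrary $p>0$ requires uniqueness/smoothing (Strichartz-type) results that do not exist, and (b) a Carleman-based UCP for such rough potentials at $L^\infty_tH^1_x$ regularity, which is likewise unavailable in this generality. Calling this ``the main obstacle'' does not close it. The paper's central idea --- and the piece missing from your proposal --- is to run the entire contradiction argument with the approximation parameter \emph{fixed}: the contradicting sequence consists of solutions of the Lipschitz problem \eqref{wn} for one fixed $n$, so the limit $u$ solves $i\partial_tu+\Delta u=F_nu$ with $F_n$ globally Lipschitz on $L^2(\Omega)$. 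For that equation weak solutions are unique, and the UCP (Lemma \ref{Theo. 2.2}) is obtained by smoothing the data and source, applying the $H^{2,2}$ unique continuation theorem of \cite{las03} to the regularized problems, and invoking uniqueness to conclude. The resulting constant in \eqref{estUCP} depends only on $\|y_0\|_{H_0^1(\Omega)}$ through the normalization $0<\ell<\|y_{n,0}\|_{L^2}$, $\|y_{n,0}\|_{H_0^1}<L$, which is what makes the decay rate uniform in $n$ and lets it survive the passage to the limit. Without this device your proof does not close as written.
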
	

  The proof of the exponential decay estimate as in Theorem \ref{theorem 2.1} is generally reduced to showing that given $R>0$, an inequality of the form
  \begin{equation}
	 	 		\int_{0}^{T}\int_{\Om\backslash \om}|y|^2dxdt\leq
	 	 		c\int_0^T\int_{\Om}a(x)|y|^2\,dx\,dt
	 	 		\label{estUCP3}\end{equation} must be satisfied for all $y$ solving \eqref{problema} with data satisfying $\|y_0\|_{\mathcal{X}}\le R$.  It is standard to prove these kinds of inequalities by contradiction, since then one can obtain a sequence of initial data satisfying $\|y_{0}^k\|_{\mathcal{X}}\le R$, whose corresponding solutions $y^k$ violate \eqref{estUCP3} with say $c=k$. The a priori bound $\|y_{0}^k\|_{\mathcal{X}}\le R$ is used to pass to a subsequence of $y^k$ which is expected to converge (in an appropriate sense) to a solution of the fully nonlinear model, say $u$, which in particular vanishes on $\omega$ (or on $\mathbb{R}^N \backslash B_{R^\prime}$ if $\Omega$ is unbounded).  Then a unique continuation argument must be triggered to conclude that $u$ is zero, which indicates a contradiction based on a further standard normalization argument.  Unfortunately, there is no established wellposedness theory for NLS when it is considered on a general domain with arbitrary data and power index, especially in dimensions three and higher. Absence of uniquesness and smoothing results for general domains makes it quite difficult to handle the nonlinear terms in passage to the limits and obtain a unique continuation property.  This motivates us to follow a novel strategy for stabilizing locally damped pdes based on first working with approximate models whose nonlinear parts are only Lipschitz.   The approximate models possess the desired uniqueness and strong regularity properties.  We focus on exponentially stabilizing solutions of these approximate models.  This is considerably easier than working with the fully nonlinear model because we can easily obtain a unique continuation property for the approximate models. The biggest advantage is that we do not need to handle highly nonlinear terms and therefore do not need to use smoothing properties generally implied by Strichartz type estimates, which are not widely available or true on general domains.  Once the exponential stability for approximate models is established, the existence of a weak solution as well as its exponential stability for the original model \eqref{problema} is achieved in a single shot.

\subsection{Orientation }	
The proof of  Theorem  \ref{theorem 2.1} requires a combination of several steps:
\begin{itemize}
  \item[Step 1:] We shall first work on a bounded domain and construct \emph{approximate solutions}. This is achieved by using the $m$-accretivity of the nonlinear source $By=|y|^py$ on a suitably chosen domain.  This allows us to replace $By$ with its Yosida approximations $B_ny=BJ_ny$, where $J_n$'s are the resolvents of $B$.  We construct an infinite sequence of almost-linear (i.e., Lipschitz) problems (see \eqref{wn}), whose unique and strong solutions, say $y_n$, can be easily obtained via the classical semigroup theory.
  \item[Step 2:] We obtain a \emph{unique continuation property} (Lemma \ref{Theo. 2.2}) which is valid for any weak solution of the approximate solution model that vanishes on $\omega$.  It is noteworthy to mention that the unique continuation property is not stated for a linear model, but rather given for the approximate solution model whose nonlinear part is globally Lipschitz in $L^2(\Omega)$. This allows us to simplify the proof of an important inequality (see Lemma \ref{lema1}). Uniqueness of solution for the approximate model is critical in the proof of the unique continuation.
  \item[Step 3:] By using the multipliers we show that the approximate solutions $y_n$'s are nonincreasing at the $L^2-$level, and moreover uniformly bounded in $n$ at the $H^1$-level. The assumption \eqref{anabla} on the damping coefficient plays a critical role in controlling the $H^1$-norm.
  \item[Step 4:] The exponential decay of approximate solutions is reduced to proving the inequality given in \eqref{estUCP}.  This is proven by contradiction utilizing the unique continuation property given in Step 2.
  \item[Step 5:] As a last step, we use the classical compactness arguments based on the uniform bounds of the approximate solutions in suitable spaces to pass to a subsequence which converges to a soughtafter weak solution of the original model.  The decay of this weak solution is obtained via weak lower semicontinuity of the norm.
  \item[Step 6:] We extend the proof of Theorem \ref{theorem 2.1} to unbounded domains in the particular cases where  $ \Omega $ is either the whole space or an exterior domain.
    \item[Step 7:] We finish the paper with a numerical section, based on a Finite Volume Method, where illustrations verify the proved decay rate.
\end{itemize}
	 	 \section{Approximate solutions, weak solution, unique continuation, stabilization}
	 	 \setcounter{equation}{0}
	 	 This section is devoted to the proof of the main result when $ \Omega $ is a bounded domain.  Monotone operator theory is used as in Özsar\i\,et al. \cite[Section 4]{Ozsar} to construct approximate solutions, except that the treatment here also includes the case of a space dependent damping coefficient.  Once such solutions are constructed we prove that they obey a mass decay law at the $L^2$ level via a unique continuation property.  Finally, we pass to the limit to construct a weak solution.  A similar mass decay for this weak solution is obtained via weak lower semicontinuity argument.	 	
	 	
We start our construction of approximate solutions for problem (\ref{problema}) by replacing the nonlinear source with its Yosida approximations.   To this end, we consider the nonlinear operator $\mathcal{B}$   on $L^2\left(\Omega\right)$ defined by
	 	 \begin{eqnarray}\label{yosida}D(\mathcal{B})&\doteq&\set{y\in\,L^2\left(\Omega\right); \,|y|^p\,y\in\,L^2\left(\Omega\right)},\\
	 	 \mathcal{B} y&\doteq&|y|^p\,y,\,\,\forall\,y\in\,D(\mathcal{B})\,.\label{B}
	 	 \end{eqnarray} It is well known that $\mathcal{B}$ is m-accretive (see e.g., Okazawa and Yokota \cite[Lemma 3.1]{Okazawa}). 	
	 	 Thus,  we can define the (Lipschitz) Yosida approximations $\mathcal{B}_{n}$ of $\mathcal{B}$ in terms of the resolvents $J_{n}$:
	 	 \begin{eqnarray}\label{Jrn}J_{n}\doteq\left(1+\frac1n\,\mathcal{B}\right)^{-1}\end{eqnarray} and \begin{eqnarray}\label{Bn}
	 	 \mathcal{B}_{n}\doteq n\,(I-J_{n})=\mathcal{B}\,J_{n}\,.\end{eqnarray}
	 	 One can represent the operators $\mathcal{B}$ and $\mathcal{B}_{n}$
	 	 as subdifferentials  $$\mathcal{B}=\partial\,\psi\quad\hbox{ and }\quad
	 	 \mathcal{B}_{n}=\partial\,\psi_{n},$$ where $\psi$ and $\psi_{n}$ are given by
	 	 \begin{eqnarray}\label{psi}\psi(y)\doteq\left\{\begin{aligned}&\frac1{p+2}\,||y||_{L^{p+2}\left(\Omega\right)}^{p+2}\quad\hbox{ for
	 	 }y\in\,L^{p+2}\left(\Omega\right)\\
	 	 &\infty\quad\hbox{ otherwise }\end{aligned}\right.\end{eqnarray}
	 	 and
	 	 \bq\label{identpsi}\psi_{n}(y)\doteq\min_{v\in\,L^2\left(\Omega\right)}\left\{\frac
	 	 n2\,||v-y||_{L^2\left(\Omega\right)}^2+\psi(v)\right\}\\\nonumber&=&\frac1{2n}\,\norm{\mathcal{B}_{n}\,y
	 	 }{L^2\left(\Omega\right)}^2+\psi(J_{n}\,y),\,\,y\in\,L^2\left(\Omega\right).\eq	 		 	
	 	 Moreover, one has
	 	 \begin{equation}
	 	 \label{ineqs}\psi(J_n(z))\leq \psi_n(z)\leq \psi(z)\,.
	 	 \end{equation} 	
	 	 Now, given $ y_0\in\,\mathcal{X}, $ we choose a sequence of elements $ \{y_{n, 0}\} \subset \mathcal{X}\cap H^2(\Om)$ such that $y_{n, 0}\rightarrow y_0$ in   $\mathcal{X}.$
	 	We first consider the following approximate problems:
	 	 \begin{eqnarray}\label{wn}
	 	 \left\{\begin{aligned}&i\,\partial_t\,y_n+\Delta\,y_n-F_{n}(y_n)=0
	 	 \quad\hbox{ in }\quad \Omega\,\times\,(0,T),\\
	 	 &y_n(0)=y_{n,0}\quad\hbox{ in }\quad \Omega,\end{aligned}\right. \end{eqnarray} where $F_{n}(y_n) :=\mathcal{B}_{n}(y_n)-ia(x)\,y_n$.
	 	
	 	 As $\mathcal{B}_{n}$ is Lipschitz with say Lipschitz constant $L_{n}$, we deduce that $ F_{n} $ is also Lipschitz. Indeed, let $ y,z\in\,L^2\left(\Omega\right)$, then
	 	 \begin{eqnarray*}
	 	 	\norm{F_{n}(y)-F_{n}(z)}{L^2\left(\Omega\right)}&\leq&\norm{\mathcal{B}_{n}(y)-\mathcal{B}_{n}(z)}{L^2\left(\Omega\right)}+\norm{a(\cdot)(y-z)}{L^2\left(\Omega\right)}\\
	 	 	&\leq&\left(\,L_{n}+\norm{a}{L^{\infty}\left(\Omega\right)}\right)\,\norm{y-z}{L^2\left(\Omega\right)}\,.
	 	 \end{eqnarray*} By using the standard semigroup theory \cite{Pazy}, we obtain a unique solution
	 	 $y_n$ which solves \eqref{wn} and satisfies $y_n\in\,C([0,\infty);H_0^1(\Om)\,\cap\,H^2\left(\Omega\right))\,\cap\,C^1([0,\infty);L^2\left(\Omega\right))$.

Next, we prove the following unique continuation result for the approximate solutions:
	 	 \begin{lemma}[Unique Continuation]\label{Theo. 2.2} Let $n\ge 1$ be fixed and
	 	 	 $u\in\,L^{\infty}(0,T;H_0^1(\Omega))\,\cap\,C([0,T];L^2(\Omega))$ be a weak solution of
\begin{equation}
	\label{probF2u}
	\begin{cases}
	\,i\partial_t u+\Delta u=F_{n}u\quad \hbox{ in } \Omega\,\times (0,T)\\
	u=0 \quad\text{ a.e. in }\omega\,\times\,(0,T); \end{cases}
	\end{equation} then $u\equiv 0$ on $\Omega \times (0,T)$.
	 	 \end{lemma}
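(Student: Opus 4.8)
The plan is to exploit two structural features of the approximate nonlinearity $F_n$ that are unavailable for the genuine power nonlinearity $\mathcal{B}y=|y|^py$: namely that $F_n$ acts pointwise and vanishes at the origin. Writing $Q:=\Omega\times(0,T)$, I would first record that the Yosida approximation $\mathcal{B}_n=\partial\psi_n$ is itself a superposition (Nemytskii) operator. Indeed, $\psi$ in \eqref{psi} is the integral over $\Omega$ of the convex integrand $j(s)=\frac{1}{p+2}|s|^{p+2}$, so by the classical pointwise description of the Moreau--Yosida regularization of an integral functional, $\psi_n$ in \eqref{identpsi} is the integral of the scalar regularization $j_n$ and $\mathcal{B}_n(u)(x)=g_n(u(x))$, where $g_n$ is the scalar Yosida approximation of the maximal monotone map $s\mapsto|s|^ps$ on $\mathbb{C}\cong\mathbb{R}^2$. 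Since $g_n$ is globally Lipschitz with constant $L_n$ and $g_n(0)=0$, one gets the decisive pointwise bound $|\mathcal{B}_n(u)(x,t)|\le L_n|u(x,t)|$ for a.e. $(x,t)\in Q$.

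With this in hand, the crucial step is to rewrite the problem as a \emph{linear} Schrödinger equation with bounded potential. Setting $V(x,t):=g_n(u(x,t))/u(x,t)-i\,a(x)$ where $u(x,t)\ne0$ and $V(x,t):=-i\,a(x)$ where $u(x,t)=0$, the pointwise bound shows that $V$ is measurable with $\|V\|_{L^\infty(Q)}\le L_n+\|a\|_{L^\infty(\Omega)}$ and that $F_nu=Vu$. Hence $u$ is a weak solution of $i\,\partial_tu+\Delta u=Vu$ in $Q$ with $u=0$ a.e. on $\omega\times(0,T)$. This is exactly the reduction that is impossible for $|y|^py$, for which one has neither a pointwise Lipschitz bound nor the uniqueness that gives the argument meaning; this is the whole reason the approximation is introduced.

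It then remains to invoke a unique continuation property for the linear Schrödinger equation with an $L^\infty$ potential: a weak solution that vanishes on a nonempty open subcylinder $\omega\times(0,T)$ must vanish on all of $Q$. Because $\omega$ contains $\overline{\Gamma(x^0)}$ with $\Gamma(x^0)$ defined through the radial field $m(x)=x-x^0$ in \eqref{gamma0}, the geometry is compatible with the pseudoconvex Carleman weight built from $|x-x^0|^2$ that underlies such statements, so the hypotheses of the cited estimate are satisfied and we conclude $u\equiv0$ on $\Omega\times(0,T)$.

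The main obstacle is the unique continuation property itself: for a time-dependent bounded potential this is a nontrivial result, proved through a Carleman estimate for $i\partial_t+\Delta$ with a suitable weight, in which the lower-order term $Vu$ is absorbed into the left-hand side for large Carleman parameter precisely on the strength of the pointwise bound $|Vu|\le(L_n+\|a\|_{L^\infty(\Omega)})|u|$. A secondary difficulty is regularity: as $u$ is only a weak solution one must justify applying the Carleman estimate by a density or regularization argument (noting that the right-hand side $Vu$ lies in $L^\infty(0,T;L^2(\Omega))$, which activates the linear theory), and one must rigorously confirm the Nemytskii structure of $\mathcal{B}_n$ that delivers the pointwise bound.
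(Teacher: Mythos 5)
Your reduction of \eqref{probF2u} to a linear Schr\"odinger equation $i\,\partial_t u+\Delta u=Vu$ with $V\in L^{\infty}(\Omega\times(0,T))$ is sound: the Nemytskii structure of $\mathcal{B}_n$ and the pointwise bound $|\mathcal{B}_n(u)|\le L_n|u|$ are correct, and the paper makes the identical first move by setting $q_0=F_nu/u$ where $u\neq0$ and $q_0=0$ elsewhere. The genuine gap is the step you relegate to a ``secondary difficulty'': there is no unique continuation theorem available for \emph{weak} solutions $u\in L^{\infty}(0,T;H_0^1(\Omega))\cap C([0,T];L^2(\Omega))$ of a Schr\"odinger equation with a merely bounded, time-dependent potential. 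The Carleman-based result of \cite{las03} on which this step would rest requires either $H^{2,2}(\Omega\times(0,T))$ regularity of the solution when the potential is only $L^{\infty}$, or substantially stronger smoothness of the potential when the solution is only of energy class; your $V$, built from $g_n(u)/u$, satisfies neither, and this is precisely the obstruction the paper identifies before rejecting the direct $q_0u$ route. The ``density or regularization argument'' you gesture at does not close in the obvious way: mollifying $u$ destroys the equation, while solving the equation with mollified data produces solutions that no longer vanish on $\omega\times(0,T)$, so the Carleman estimate cannot be transferred to $u$ by a direct limiting argument.

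The paper's proof supplies exactly the missing mechanism, and it uses an ingredient absent from your proposal: \emph{uniqueness} of weak solutions for the Lipschitz approximate problem \eqref{wn}. One constructs smooth solutions $w_m\in H^{2,2}(\Omega\times(0,T))$ of the linear problems \eqref{probF2} with smoothed sources $f_m\to F_nu$ in $L^2(Q)$ and data $w_{m0}\to u(0)$ in $H_0^1(\Omega)$, still vanishing on $\omega\times(0,T)$; the $H^{2,2}$-level unique continuation of \cite{las03} kills each $w_m$; a compactness argument shows the limit $w\equiv 0$ is a weak solution of \eqref{wn} with the same initial datum as $u$; and uniqueness for the Lipschitz problem forces $u=w\equiv 0$. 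Without this bridge (or some other concrete way of descending from the $H^{2,2}$ statement to weak solutions), your argument assumes the conclusion at its critical step.
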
	 	
\begin{proof}[Proof of Lemma \ref{Theo. 2.2}]
In order to prove this theorem, we will use the unique continuation principle presented by \cite{las03}.  The unique continuation argument of \cite{las03} does not directly apply to the problem under consideration here, and there are some technical challenges related with smoothness of solutions and the source function.  In \cite{las03}, unique continuation was proved for $H^{2,2}(\Omega\times (0,T))$ solutions assuming $F_nu$ can be written as $q_0(x,t)u$ for some $q_0\in L^\infty(\Omega)$, or for energy solutions assuming $q_0$ satisfies further rather strong smoothness conditions.  Although we can put $F_nu$ in the form $q_0u$ by simply defining \[ q_0(x,t) := \begin{cases}
          \frac{(F_nu)(x,t)}{u(x,t)} & \text{ if } u(x,t)\neq 0 \\
          0 &  \text{ if } u(x,t)= 0,
       \end{cases}
    \] one cannot use the unique continuation theory at the $H^{2,2}(\Omega\times (0,T))$ level because the solutions of \eqref{wn} only belong to $C([0,T];H^2(\Omega))\cap H^1(0,T;L^2(\Omega))$, which is rougher.  Similarly, we are also not in a position to use the unique continuation at the energy level because $q_0$ does not satisfy the extra conditions given in \cite{las03} for energy solutions.  In order to deal with this difficulty, we will utilize the\emph{ uniqueness} of weak solution to \eqref{wn} together with a compactness argument. Uniqueness is unknown for the NLS with power nonlinearity on high dimensional domains, but luckily we know it is true for the approximate model \eqref{wn} with Lipschitz nonlinearity.  This is another advantage of using Yosida approximations here.

We start by shifting the topology up by constructing (sufficiently smooth) \emph{approximations of approximations}. To this end, for a given $n$, let us consider the problem:
\begin{equation}
	\label{probF2}
	\begin{cases}
	\,i\partial_t w_m+\Delta w_m=f_m(x,t)\quad \hbox{ in } \Omega\,\times (0,T)\\
	w_m=0 \quad\text{ a.e. in }\omega\,\times\,(0,T), \end{cases}
	\end{equation} together with $w_m(0)=w_{m0}\in H^4(\Omega)\cap H_0^1(\Omega)$, where $\displaystyle \lim_{m\rightarrow\infty} w_{m0}= u(0)$ in $H_0^1(\Omega)$, and $f_m\in L^2(0,T;H^2(\Omega))\cap H^1(0,T;L^2(\Omega))$ s.t.  $\displaystyle\lim_{m\rightarrow\infty} f_m = F_nu$ in $L^2(Q)$.
By the linear theory of the Schrödinger equation, \eqref{probF2} has a solution $w_m\in L^2(0,T;H^4(\Omega))\cap H^2(0,T;L^2(\Omega))$. Therefore, in particular $w^m\in H^{2,2}(\Omega\times(0,T))$ and it also satisfies the conditions given in \cite{las03}[2.1.1 (b)].  Note that the right hand side of \eqref{probF2} is simply $$\overrightarrow{0}\cdot \nabla w_m+0\cdot w_m +f_m$$ with respect to the notation given in \cite{las03}. Due to the unique continuation principle \cite{las03}[Cor 2.1.2-ii], we deduce that $w_m\equiv 0$.

By using the multipliers on \eqref{probF2} and compactness arguments we can extract a subsequence of $w_m$ which converges to a weak solution $w$ of \eqref{wn}.  But then $w(0)=u(0)$, and $w$ and $u$ solve the same equation in the weak sense. But $w$ cannot be anything other than zero since all $w_n$ were zero. On the other hand, the weak solution of \eqref{wn} is unique, and therefore we must have $0\equiv w\equiv u$.
\end{proof} 	
	 	 Now, taking the $L^2$-inner product of (\ref{wn}) with $y_n$ and looking at the imaginary parts, we see that
	 	 \begin{equation}
	 	 \Re(\partial_t\,y_n, y_n)_{L^2(\Omega)}-\underbrace{\Im(\nabla y_n,\nabla y_n)_{L^2(\Omega)}}_{=0}-\underbrace{\Im(\mathcal{B}_n(y_n),y_n)_{L^2(\Omega)}}_{=0}+(a(x)\,y_n,y_n)_{L^2(\Omega)}=0,\label{ident_L2}
	 	 \end{equation}
	 	 where the third term vanishes, since by \eqref{Bn} we have
	 	 \begin{equation}
	 	 \begin{aligned}
	 	 \label{Bnyn0}(\mathcal{B}_n(y_n),y_n)_{L^2(\Omega)}&=\left(\mathcal{B}_n(y_n),\frac1n\mathcal{B}_n(y_n)+J_n(y_n)\right)_{L^2(\Omega)}\\
	 	 &=\frac1n\,\|\mathcal{B}_n(y_n)\|_{L^2(\Omega)}^2+\left(\mathcal{B}_n(y_n),J_n(y_n)\right)_{L^2(\Omega)}\\
	 	 &=\frac1n\,\|\mathcal{B}_n(y_n)\|_{L^2(\Omega)}^2+\|J_n(y_n)\|_{L^{p+2}(\Omega)}^{p+2}\,.
	 	 \end{aligned}
	 	 \end{equation}
	 	 Hence, we obtain
	 	 \begin{eqnarray}\label{ynL2}
	 	 \label{Ld}\frac12\,\frac{d}{dt}\norm{y_n}{L^2(\Omega)}^2=-\int_{\Omega} a(x)|y_n|^2\,dx \leq \,0.
	 	 \end{eqnarray}

%

        	(\ref{ynL2}) implies that the mass $ E_{n,0}(t):=\frac12\,\|y_n(t)\|_{L^2(\Om)}^2 $ is non-increasing. Integrating \eqref{ynL2} on $(0,T)$, we obtain
	 	 	\begin{equation}
	 	 	\label{ynL2_1}E_{n,0}(T)+\int_0^T\int_{\Om}a(x)|y_n|^2\,dx\,dt=E_{n,0}(0),
	 	 	\end{equation}
	 	 	and from the assumption $ a(x)\geq a_0 >0 $ a.e. on $ \omega,  $ we get
	 	 	\begin{equation}
	 	 	\begin{aligned}
	 	 	\label{ynL2_2}a_0\int_0^T \int_{\omega}|y_n|^2\,dx\,dt&\leq \int_0^T \int_{\Omega}a(x)\,|y_n|^2\,dx\,dt\\&= E_{n,0}(0)-E_{n,0}(T)=-\frac{1}{2}\left[\int_{\Omega} |y_n|^2\,dx\right]_0^T,
	 	 	\end{aligned}
	 	 	\end{equation}
	 	 	and thus,
	 	 	\begin{equation}
	 	 	\begin{aligned}
	 	 	\label{ynL2_3}\int_0^T \int_{\omega}|y_n|^2\,dx\,dt\leq-\frac{1}{2a_0} \left[\int_{\Omega} |y_n|^2\,dx\right]_0^T.
	 	 	\end{aligned}\end{equation}
	 	 	
	 	 	Therefore,  by (\ref{ynL2_3}), we have the following estimate:
	 	 	\begin{equation}
	 	 	\begin{aligned}\label{energyest}
	 	 	\int_{0}^{T}E_{n,0}(t)dt&=\frac{1}{2}\int_{0}^{T}\int_{\omega}|y_n|^2 dx\, dt+\frac{1}{2}\int_{0}^{T}\int_{\Om\setminus \omega}|y_n|^2 dx\,dt\\&\leq -\frac{1}{2a_0}
	 	 	\left[\int_{\Om}|y_n|^2 dx\right]_{0}^{T}+\underbrace{\frac{1}{2}\int_{0}^{T}\int_{\Om\backslash\om}
	 	 	|y_n|^2dx\,dt}_{I_n}.
	 	 	\end{aligned}
	 	 	\end{equation}
	
 	We will prove in Lemma \ref{lema1} below a useful inequality for the integral $I_n$. Before proving this lemma, let us make a few more observations about the approximate solutions.

  Multiplying \eqref{wn} by $-i$ and rearranging the terms we get
	 	 $$\partial_t\,y_n=i\,\Delta\,y_n-i\,\mathcal{B}_{n}(y_n)-\,a(x)\,y_n\,.$$
 	
	 	 From the above identity, it follows that
	 	 \begin{equation}
	 	 \label{ident_1}\begin{aligned}
	 	 \Re\left(-\Delta\,y_n+\mathcal{B}_n\,(y_n), \partial_t\,y_n\right)_{L^2(\Omega)}&=\Re\left(-\Delta\,y_n+\mathcal{B}_n\,(y_n), i\, \Delta y_n - i\,\mathcal{B}_n\,(y_n) - a(x) y_n\right)_{L^2(\Omega)}\\
	 	 &=\cancelto{0}{\Re\,i\|\Delta\,y_n(t)\|_{L^2(\Omega)}^2}+\Re\left(\Delta\,y_n, i\,\mathcal{B}_n\,(y_n)\right)_{L^2(\Omega)}\\
	 	 &+\Re\left(\Delta\,y_n, a(x) y_n\right)_{L^2(\Omega)}
	 	 +\Re\left(\mathcal{B}_n\,(y_n), i\, \Delta y_n \right)_{L^2(\Omega)}\\
	 	 &+\cancelto{0}{\Re\,i\|\mathcal{B}_n(y_n(t))\|_{L^2(\Omega)}^2}-\Re(\mathcal{B}_n(y_n),a(x)\,y_n)_{L^2(\Omega)}\,.
	 	 \end{aligned}
	 	 \end{equation}
%
%
Taking into account \begin{equation*}
	 	 \label{identity_lemma}\Re\left(\Delta\,y_n, i\,\mathcal{B}_n\,(y_n)\right)_{L^2(\Omega)}
	 	 +\Re\left(\mathcal{B}_n\,(y_n), i\, \Delta y_n \right)_{L^2(\Omega)}=0,
	 	 \end{equation*}
	 	 from \eqref{ident_1}, we obtain
	 	 \begin{equation}
	 	 \label{ident_2}\begin{aligned}
	 	 \Re\left(-\Delta\,y_n+\mathcal{B}_n\,(y_n), \partial_t\,y_n\right)_{L^2(\Omega)}&=\Re\left(\Delta\,y_n, a(x) y_n\right)_{L^2(\Omega)}-\Re(\mathcal{B}_n(y_n),a(x)\,y_n)_{L^2(\Omega)}\\
	 	 &=-\Re\left(\nabla\,y_n, \nabla\,a(x) y_n\right)_{L^2(\Omega)}-\int_\Omega a(x)|\nabla\,y_n|^2\,dx \\
	 	 &-\Re(\mathcal{B}_n(y_n),a(x)\,y_n)_{L^2(\Omega)}\,.
	 	 \end{aligned}
	 	 \end{equation}
	 	 It follows from Showalter \cite[Chapter IV, Lemma 4.3]{Show} that
	 	 \begin{eqnarray}\label{show}&\displaystyle\frac{d}{dt}\psi_{n}(y_n)=Re\left(\mathcal{B}_{n}
	 	 (y_n),\partial_t\,y_n\right)_{L^2\left((\Omega)\right)}.\end{eqnarray}
	 	 Using \eqref{show}, we get
	 	 \begin{equation}
	 	 \label{ident_3}\begin{aligned}
	 	 \Re\left(-\Delta\,y_n+\mathcal{B}_n\,(y_n), \partial_t\,y_n\right)_{L^2(\Omega)}&=\frac{d}{dt}\left[\frac12\,\|\nabla\,y_n(t)\|^2_{L^2(\Omega)}+\psi_{n}(y_n)\right]\,.
	 	 \end{aligned}
	 	 \end{equation}
 	 Combining \eqref{ident_2} and \eqref{ident_3}, it follows that
	 	 \begin{equation}
	 	 \label{ident_4}\begin{aligned}
	 	 &\frac{d}{dt}\left[\frac12\,\|y_n(t)\|^2_{H_0^1(\Omega)}+\psi_{n}(y_n)\right]+\int_\Omega a(x)\,|\nabla\,y_n|^2\,dx\\
	 	 &= -\Re\left(\nabla\,y_n, \nabla\,a(x) y_n\right)_{L^2(\Omega)}-\Re(\mathcal{B}_n(y_n),a(x)\,y_n)_{L^2(\Omega)}\,.
	 	 \end{aligned}
	 	 \end{equation}
	 	 Using (\ref{Bn}), we obtain
	 	 {\small\begin{eqnarray}\label{bridge_Bnyn0} \hspace{0.8cm} -\Re(\mathcal{B}_n(y_n),a(x)\,y_n)_{L^2(\Omega)}=-\frac1n\,\int_{\Omega}a(x)|\mathcal{B}_n(y_n)|^2\,dx-\int_{\Omega}a(x)|J_n(y_n)|^{p+2}\,dx\leq\,0\,.
	 	 	\end{eqnarray}}
	 	 From \eqref{ident_4} and \eqref{bridge_Bnyn0} and taking into account the assumption \eqref{anabla}, we have
	 	 \begin{equation}
	 	 \label{ident_5'}\begin{aligned}
	 	 &\frac{d}{dt}\left[\frac12\,\|y_n(t)\|^2_{H_0^1(\Omega)}+\psi_{n}(y_n)\right]+\int_\Omega a(x)\,|\nabla\,y_n|^2\,dx\\
	 	 &= -\Re\left(\nabla\,y_n, \nabla\,a(x) y_n\right)_{L^2(\Omega)}\underbrace{-\Re(\mathcal{B}_n(y_n),a(x)\,y_n)_{L^2(\Omega)}}_{\leq\,0}\\
	 	 &\leq \int_\Omega |\nabla\,a(x)|\,|y_n|\,|\nabla\,y_n|\,dx\leq {C}\,\int_\Omega |a(x)|^\frac{1}{2}\,|y_n|\,|\nabla\,y_n|\,dx\,.
	 	 \end{aligned}
	 	 \end{equation}
	 	
	 	 Employing the inequality $ ab\leq\, \frac1{4\varepsilon}a^2+\varepsilon\,b^2 $ ($\epsilon>0$)   we obtain
	 	 \begin{equation}
	 	 \label{ident_6}\begin{aligned}
	 	 &\frac{d}{dt}\left[\frac12\,\|y_n(t)\|^2_{H_0^1(\Omega)}+\psi_{n}(y_n)\right]+\int_\Omega a(x)\,|\nabla\,y_n|^2\,dx\\
	 	 &\leq \frac{{C}^2}{4\,\varepsilon}\,
	 	 \int_\Omega \,|y_n|^2\,dx+{\varepsilon}\,\int_\Omega a(x)\,|\nabla\,y_n|^2\,dx.
	 	 \end{aligned}
	 	 \end{equation}
	 	 that is,
	 	 \begin{equation}
	 	 \label{ident_7}\begin{aligned}
	 	 &\frac{d}{dt}\left[\frac12\,\|y_n(t)\|^2_{H_0^1(\Omega)}+\psi_{n}(y_n)\right]+(1-\varepsilon)\,\int_\Omega a(x)\,|\nabla\,y_n|^2\,dx\\&\leq \frac{{C}^2}{4\,\varepsilon}\int_\Omega \,|y_n|^2\,dx\,.
	 	 \end{aligned}
	 	 \end{equation}
	 	
	 	 Considering $ \varepsilon>0 $ small enough, we conclude that
	 	 \begin{equation}
	 	 \label{ident_8}\begin{aligned}
	 	 &\frac{d}{dt}\left[\frac12\,\|y_n(t)\|^2_{H_0^1(\Omega)}+\psi_{n}(y_n)\right]\leq \frac{{C}^2}{4\,\varepsilon}\int_\Omega \,|y_n|^2\,dx\,.
	 	 \end{aligned}
	 	 \end{equation}
	 	 Integrating \eqref{ident_8} in variable $ t\in\,[0,T]$, we get
	 	
	 	 \begin{equation}
	 	 \label{ident_9}\frac{1}{2}\|y_n(t)\|^2_{H_0^1(\Omega)}+\psi_{n}(y_n) \leq \frac{1}{2}\|y_{n,0}\|_{H_0^1(\Omega)}^2 +\psi_{n}(y_{n,0})+\frac{{C}^2}{4\,\varepsilon}\,\int_0^T\int_\Omega \,|y_n|^2\,dx\,dt.
	 	 \end{equation}
	 	
	 	 	\begin{lemma}There exists some $n_0\ge 1$, such that for any fixed $n\ge n_0$, the corresponding solution $y_n$ of \eqref{wn} will satisfy the inequality
	 	 		\begin{equation}
	 	 		\int_{0}^{T}\int_{\Om\backslash \om}|y_n|^2dxdt\leq
	 	 		c\int_0^T\int_{\Om}a(x)|y_n|^2\,dx\,dt
	 	 		\label{estUCP}\end{equation} for some $c$ (which depends on $\|y_0\|_{H_0^1(\Omega)}$).
	 	 		
	 	 		\label{lema1}
	 	 	\end{lemma}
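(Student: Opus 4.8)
The plan is to argue by contradiction, reducing the observability inequality \eqref{estUCP} to the unique continuation property of Lemma \ref{Theo. 2.2} through a normalization-and-compactness scheme, following the contradiction scheme outlined after \eqref{estUCP3}. Suppose the claim fails: then over a fixed ball $\|y_0\|_{\mathcal X}\le R$ of data there are indices $n$ (arbitrarily large) with corresponding solutions of \eqref{wn}, call them $y_k$ (solving \eqref{wn} with parameter $n_k$), for which $\int_0^T\int_{\Om\setminus\om}|y_k|^2 > k\int_0^T\int_\Om a|y_k|^2$. (If the $n_k$ stay bounded, a subsequence fixes $n$, and a single solution with $\int_0^T\int_\Om a|y_k|^2=0$ vanishes on $\om$, hence is $\equiv0$ by Lemma \ref{Theo. 2.2}, contradicting $\int_0^T\int_{\Om\setminus\om}|y_k|^2>0$.) Setting $\lambda_k:=(\int_0^T\int_{\Om\setminus\om}|y_k|^2)^{1/2}$ and $u_k:=y_k/\lambda_k$, I record $\int_0^T\int_{\Om\setminus\om}|u_k|^2=1$ while $\int_0^T\int_\Om a|u_k|^2<1/k\to0$; in particular $u_k\to0$ in $L^2(\om\times(0,T))$, so any limit vanishes on $\om$.

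Next I would collect uniform bounds. From \eqref{ynL2_1} the $L^2$-mass is nonincreasing, and \eqref{ident_9} together with $\psi_n\ge0$ and this $L^2$ bound gives a bound for $\|y_k\|_{L^\infty(0,T;H_0^1(\Om))}$ that is uniform in $k$ and depends only on $R$. The decisive point is to control the nonlinearity uniformly: since $\mathcal B_n(w)=|J_nw|^pJ_nw$ with $|J_nw|\le|w|$ pointwise, the representation \eqref{identpsi} and \eqref{ineqs} yield $\|\mathcal B_{n_k}(y_k)\|_{L^{(p+2)'}(\Om)}^{(p+2)'}=\|J_{n_k}y_k\|_{L^{p+2}(\Om)}^{p+2}\le(p+2)\psi_{n_k}(y_k)$, hence a uniform bound for $\mathcal B_{n_k}(y_k)$ in $L^\infty(0,T;L^{(p+2)'}(\Om))$. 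Provided $\lambda_k$ stays bounded below, $u_k$ is bounded in $L^\infty(0,T;H_0^1(\Om))$, and reading $\partial_t u_k$ off \eqref{wn} bounds it in $L^\infty(0,T;H^{-s}(\Om))$ for $s$ large enough that $L^{(p+2)'}\hookrightarrow H^{-s}$. An Aubin--Lions argument then extracts a subsequence with $u_k\to u$ strongly in $L^2(Q)$, so that $\int_0^T\int_{\Om\setminus\om}|u|^2=1$, hence $u\not\equiv0$, while $\int_0^T\int_\Om a|u|^2=0$ forces $u=0$ on $\om\times(0,T)$.

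It then remains to identify the limit equation and trigger unique continuation. Since $J_{n_k}y_k-y_k\to0$ in $L^2$ and $u_k\to u$ strongly, passing to the limit in \eqref{wn} shows that $u$ solves $i\partial_t u+\Delta u=q_0u$, where, for a fixed parameter $n$, the pointwise estimate $|\mathcal B_n(w)|=|J_nw|^{p+1}\le C_n|w|$ keeps $q_0\in L^\infty(\Om\times(0,T))$ — precisely the structural hypothesis exploited in the proof of Lemma \ref{Theo. 2.2}. Invoking that unique continuation property (equivalently, its proof, which only uses $q_0\in L^\infty$ together with uniqueness of the approximate model) yields $u\equiv0$, contradicting $\int_0^T\int_{\Om\setminus\om}|u|^2=1$. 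This contradiction proves \eqref{estUCP}.

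The hard part will be twofold. First, the degenerate alternative $\lambda_k\to0$ must be excluded or treated separately, since there the $H_0^1$-bound on $u_k$ is lost and the compactness step breaks down; I expect to handle it by exploiting the smallness of the solution, where \eqref{wn} is effectively linear and observability holds with a fixed constant. Second, and most delicately, the uniformity of $c$ in $n$ requires the limiting equation to remain of approximate (fixed-$n$) type, so that Lemma \ref{Theo. 2.2} applies with a \emph{bounded} potential; one must prevent the scenario $n_k\to\infty$, $\lambda_k\not\to0$ from producing the genuinely nonlinear limit $i\partial_t u+\Delta u=\lambda_\infty^p|u|^pu-iau$, for which no unique continuation is available in high dimensions. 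Keeping the nonlinearity controlled through the uniform $L^{(p+2)'}$-bound and the pointwise estimate above is exactly what makes the limit tractable and is the linchpin of the argument.
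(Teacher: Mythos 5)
Your overall strategy (compactness--uniqueness by contradiction, reduction to Lemma \ref{Theo. 2.2}, the uniform $L^\infty(0,T;H_0^1(\Omega))$ and $L^\infty(0,T;L^{(p+2)'}(\Omega))$ bounds) matches the paper's, but the two difficulties you flag at the end are not side issues to be patched later; they are exactly where your argument breaks, and the paper's proof is organized precisely to avoid both of them. First, the paper never lets $n$ vary along the contradiction sequence: it fixes $n$, fixes a class of data via \eqref{Lyn2} (with $0<\ell<\|y_{n,0}^k\|_{L^2(\Omega)}$ and $\|y_{n,0}^k\|_{H_0^1(\Omega)}<L$), and produces for each $k$ a datum in that class whose solution of the \emph{same} fixed-$n$ problem violates \eqref{estUCP} with $c=k$. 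The weak limit then solves the fixed-$n$ Lipschitz model \eqref{wn}, so Lemma \ref{Theo. 2.2} applies directly; the scenario $n_k\to\infty$ producing the full NLS in the limit simply never arises. Your version, which negates over $n$, has no mechanism to rule that scenario out, and you offer none.

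Second, and more seriously, the paper does not pass to the limit in the normalized sequence at all, so it never needs your $\lambda_k$ (its $\nu_k$) to stay bounded below. It first applies compactness and unique continuation to the \emph{un-normalized}, uniformly bounded sequence $y_n^k$, concluding $y_n^k\to 0$ strongly in $L^2(0,T;L^2(\Omega))$ and hence $\nu_k\to 0$; the degenerate case you hope to ``exclude or treat separately'' is in fact the only case that occurs. The contradiction is then purely algebraic: the energy/multiplier chain \eqref{energyest}--\eqref{est5} gives $E_{n,0}^k(0)\le\hat C\left(\int_0^{T_0}\int_\Omega a|y_n^k|^2\,dx\,dt+\nu_k^2\right)$, which after division by $\nu_k^2$ and use of \eqref{estUCP2} yields the uniform bound \eqref{est7} on $\|v_k(0)\|_{L^2(\Omega)}$, while $\|v_k(0)\|_{L^2(\Omega)}\ge \ell/\nu_k\to\infty$ by the lower bound on the data. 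No compactness for $v_k$, no limit equation for $v_k$, and no linearized observability ``with a fixed constant'' are needed. As written, your main line (extracting a strong $L^2$ limit of $u_k$ with unit mass off $\omega$) requires an $H^1$ bound on $y_k/\lambda_k$ that is unavailable, and your proposed fallback is not an argument; this is a genuine gap at the central step of the proposal.
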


	 	 	\begin{proof}[Proof of Lemma \ref{lema1}] The initial datum $y_0\in H_0^1(\Omega)$ in the original model \eqref{problema} is either zero (case (i)) or not zero (case (ii)).

  In the first case, namely if $y_0\equiv 0$, then we can simply set $y_{n,0}\equiv 0$ for $n\ge 1$, which will trivially converge to $y_0\equiv 0$ in $H_0^1(\Omega)$, and the corresponding unique solution of \eqref{wn} will be $y_n\equiv 0$.  Therefore, \eqref{estUCP} will readily hold.

  In the second case, where $y_0\not\equiv 0$, we can choose two strictly positive numbers $\ell,L>0$ such that \begin{equation}\label{L} 0<\ell< \|y_0\|_{L^2(\Omega)} \text{ and } \|y_0\|_{H_0^1(\Omega)}<L,\end{equation} say for instance $\ell\doteq\frac{1}{2}\|y_0\|_{L^2(\Omega)}>0$, and $L\doteq2\|y_0\|_{H_0^1(\Omega)}>0$. On the other hand, we know that $y_{n,0}$ are chosen to strongly converge to $y_0$ in $H_0^1(\Omega)$.  Therefore, there exists $n_0>0$ such that for all $n\ge n_0$, $y_n$ will satisfy
  \begin{equation}\label{Lyn} 0<\ell< \|y_{n,0}\|_{L^2(\Omega)}\le\|y_0\|_{L^2(\Omega)}  \text{ and }  \|y_0\|_{H_0^1(\Omega)}\le \|y_{n,0}\|_{H_0^1(\Omega)}<L.\end{equation}

 Now, we claim that under the condition \eqref{Lyn} on $y_{n,0}$, the solution $y_n$ of \eqref{wn} satisfies \eqref{estUCP}.  In order to prove the claim, we argue by contradiction.  Now, if the claim is false, then no matter what we choose for the constant $c$ in \eqref{estUCP}, we can find an initial datum for problem  \eqref{wn} whose corresponding solution violates  $\eqref{estUCP}$. For example if $c=k\ge 1$, then there exists an initial datum, say $y_{n,0}^k\in H^2(\Omega)\cap H_0^1(\Omega)$, satisfying the properties
 \begin{equation}\label{Lyn2} 0<\ell< \|y_{n,0}^k\|_{L^2(\Omega)}\le\|y_0\|_{L^2(\Omega)}  \text{ and }  \|y_0\|_{H_0^1(\Omega)}\le \|y_{n,0}^k\|_{H_0^1(\Omega)}<L,\end{equation}
  but whose corresponding solution, say $y_n^k$, violates $\eqref{estUCP}$ in the sense
 \begin{equation}
	 	 		\int_{0}^{T}\int_{\Om\backslash \om}|y_n^k|^2dxdt>
	 	 		k\int_0^T\int_{\Om}a(x)|y_n^k|^2\,dx\,dt.
	 	 		\label{estUCP2}\end{equation}
Moreover, we can say this for each $k\ge 1$, and hence obtain a sequence of initial data $\{y_{n,0}^k\}_{k=1}^\infty$, each of whose elements satisfy $\eqref{Lyn2}$ and a sequence of corresponding solutions 	$\{y_{n}^k\}_{k=1}^\infty$, each of whose elements solves \eqref{wn} but also satisfies \eqref{estUCP2}.

Since $y_n^k$ is bounded in $L^\infty(0,T;H_0^1(\Omega))$, we obtain a subsequence of $y_n^k$ (denoted same) which converges (weakly-$^*$) to some $u$ in $L^\infty(0,T;H_0^1(\Omega))$. Moreover, $F_n(y_n^k)$ is bounded in $L^\infty(0,T;L^{(p+2)'}(\Omega))$; therefore there is some $\chi$ such that $F_n(y_n^k)$ (indeed a subsequence of it) weakly-$^*$ converges to in $L^\infty(0,T;L^{(p+2)'}(\Omega))$. It follows that $\partial_ty_n^k$ is bounded in $L^\infty(0,T;H^{-1}(\Omega))$ and (a subsequence of) it  weakly-$^*$ converges to $u_t$ in $L^\infty(0,T;H^{-1}(\Omega))$. By compactness, we have $y_n^k$ converges strongly to $u$ in $L^\infty(0,T;L^2(\Omega))$ and a.e. on $[0,T]\times \Omega$.  Then, we have $\chi=F_n(u)$, and $u$ satisfies the main equation of the approximate model \eqref{wn}. Moreover, since the left hand side of \eqref{estUCP2} is bounded, we have
\begin{equation}\label{lemmaimp3}
  \int_0^T\int_\Omega a(x)|y_n^k|^2dxdt \rightarrow 0.
\end{equation} Therefore, using the assumption $a(x)\ge a_0>0$ on $\omega$, we have
\begin{equation}\label{lemmaimp4}
  \int_0^T\int_\omega |y_n^k|^2dxdt \rightarrow 0,
\end{equation} which implies that $u\equiv 0$ on $\omega$ since $y_n^k$ strongly converges to $u$ in $L^\infty(0,T;L^2(\Omega))$.  Therefore, $u$ must be zero by unique continuation property.  But then we define
\begin{equation}
	 	 		\nu_k=\sqrt{\int_0^T\int_{\Omega-\omega}|y_n^k|^2dxdt} \label{normalizeduk}
	 	 		\end{equation} together with $v_k=y_n^k/\nu_k$. Dividing both sides of \eqref{estUCP2} by $\nu_k^2$, we obtain in the same way
\begin{equation}\label{lemmaimp3a}
  \int_0^T\int_\Omega a(x)|v_k|^2dxdt \rightarrow 0,
\end{equation} which implies
\begin{equation}\label{lemmaimp4a}
  \int_0^T\int_\omega |v_k|^2dxdt \rightarrow 0.
\end{equation} But we also  know that $y_n^k\rightarrow u\equiv 0$ in $L^2(0,T;L^2(\Omega))$, and hence $\nu_k\rightarrow 0$. We in particular have
 \begin{equation}\label{important}
   \|v_k(0)\|_{L^2(\Omega)} = \frac{\|y_n^k(0)\|_{L^2(\Omega)}}{\nu_k} \ge \frac{\ell}{\nu_k} \rightarrow \infty\,\, (\text{as }k\rightarrow \infty).
 \end{equation}
	 	 		On the other hand,  the energy dissipation law  yields
	 	 		\bq\label{est2}E_{n,0}^k(T_0)=E_{n,0}^k(0)-\int_0^{T_0}\int_\Om a(x)| y_n^k(x,t)|^2\,dx\,dt\,.\eq
	 	 		
	 	 		Combining (\ref{energyest}) and (\ref{est2}), we infer
	 	 		\begin{eqnarray*}
	 	 			\int_0^T
	 	 			E_{n,0}^k(t)\,dt&\leq& \frac{1}{a_0}
	 	 			E_{n,0}^k(0)+\frac{1}{2}\int_{0}^{{T_0}}\int_{\Om\backslash\om}
	 	 			|y_n^k|^2\,dx\,dt\\&\leq&\frac{1}{a_0} \left[E_{n,0}^k({T_0})+\int_0^T\int_\Om a(x)| y_n^k|^2\,dx\,dt\right]\\
	 	 			&+&\frac{1}{2}\int_{0}^{{T_0}}\int_{\Om\backslash\om}\,|y_n^k|^2\,dx\,dt\,.
	 	 		\end{eqnarray*}
	 	 		
	 	 		Since $E_{n,0}^k(t)$ is a non-increasing function, one has from the above inequality the following estimate:
	 	 		\bq\label{est3}
	 	 		E_{n,0}^k({T_0})\left({T_0}-\frac{1}{a_0} \right)&\leq&\frac{1}{a_0} \int_0^{T_0}\int_\Om a(x)| y_n^k|^2\,dx\,dt\\ \nonumber&+&\frac{1}{2}\int_{0}^{{T_0}}\int_{\Om\backslash\om}|y_n^k|^2\,dx\,dt\, .\eq
	 	 		
	 	 		From (\ref{est3}), we deduce that, for sufficiently large ${T_0}>0$, there exists
	 	 		$C=C(a_0,T_0)$ verifying
	 	 		\bq\label{est4}E_{n,0}^k({T_0})\leq\,C\,\left[\int_0^{T_0}\int_\Om a(x)| y_n^k|^2\,dx\,dt+\int_{0}^{{T_0}}\int_{\Om\backslash\om}|y_n^k|^2dx\,dt\right]\!.\eq
	 	 		
	 	 		Combining (\ref{est2}) and (\ref{est4}) we finally deduce that
	 	 		\bq\label{est5}E_{n,0}^k(0)\leq\,\hat{C}\,\left(\int_0^{T_0}\int_\Om a(x)| y_n^k|^2\,dx\,dt+\int_{0}^{{T_0}}\int_{\Om\backslash\om}|y_n^k|^2\,dx\,dt\right)\,.\eq
	 	 		
	 	 		From (\ref{est5}) we infer for any $k\in\,\mathbb{N}$, that
	 	 		\bq\label{est6}\frac{E_{0,n}^k(0)}{\nu_k^2}\leq\,\hat{C}\,\left(\int_0^{T_0}\int_\Om a(x)| v_k|^2\,dx\,dt+1\right)\,.\eq
	 	 		
	 	 		Thus, we guarantee the existence of $M> 0$ such
	 	 		that
	 	 		\bq\label{est7}\frac{1}{2}\norm{v_k(0)}{L^2(\Omega)}^2=\frac{\norm{y_{n,0}^k}{L^2(\Omega)}^2}{2\nu_k^2}=\frac{E_{0,n}^k(0)
	 	 		} { \nu_k^2 }\leq M\quad\hbox{ for all } k\in\,\mathbb{N},\eq which establishes a bound
	 	 		for the initial data $v_k(0)$ in $L^2$-norm. This contradicts with \eqref{important}.  Hence, by contradiction, $y_n$ must satisfy \eqref{estUCP}.\\
	 	 		
	 	 	\end{proof}

	 	 	We notice that the equations \eqref{est2}-\eqref{est5} are all valid for $y_n^k$ replaced by $y_n$, too.
	 	 	It follows from \eqref{ynL2_1} that
	 	 	\begin{equation}
	 	 	\label{final_inequality}
	 	 	E_{n,0}(T_0)\leq E_{n,0}(0) \leq C\,\int_0^{T_0}\int_{\Om}a(x)|y_n|^2\,dx\,dt,
	 	 	\end{equation} where $C$ is a positive constant.
	 	 	
	 	 	Now, combining \eqref{ynL2_1} and \eqref{est5}, and using the inequality \eqref{estUCP} given in the above lemma, we obtain
	 	 	\begin{equation}
	 	 	\begin{aligned}
	 	 	E_{n,0}(T_0)&\leq C\,\int_0^{T_0}\int_{\Om}a(x)|y_n|^2\,dx\,dt\\&=C\,\left(E_{n,0}(0)-E_{n,0}(T_0)\right)\,.
	 	 	\end{aligned}
	 	 	\end{equation}
	 	 	Therefore,\begin{eqnarray}
	 	 	\label{inequality}E_{n,0}(T_0)&\leq&\left(\frac{C}{1+C}\right)\,E_{n,0}(0).
	 	 	\end{eqnarray}
	 	 	Repeating the procedure for $nT_0$, $n\in \mathbb{N}$, we deduce
	 	 	$$E_{n,0}(nT_0)\leq\frac{1}{(1+\hat{C})^n}E_{n,0}(0)$$ for all $n\geq 1.$\\
	 	 	
	 	 	Let us consider, now,  $t\geq T_0$, and then write $t=nT_0+r,$ $0\leq r<T_0.$
	 	 	Thus, $$E_{n,0}(t)\leq E_{n,0}(t-r)=E_{n,0}(nT_0)\leq
	 	 	\frac{1}{(1+\hat{C})^n}\,E_{n,0}(0)=\frac{1}{(1+\hat{C})^{\frac{t-r}{T_0}}}E_{n,0}(0).
	 	 	$$
	 	 	
	 	 	Setting $\displaystyle C_0=\textrm{e}^{\frac{r}{T_0}\ln(1+\hat{C})}$ and
	 	 	$\lambda_0=\frac{\ln(1+\hat{C})}{T_0}>0,$ we obtain
	 	 	\begin{equation}\label{exp}
	 	 	E_{n,0}(t)\leq C_0 \,\textrm{e}^{- \lambda_0 t}E_{n,0}(0); \ \ \forall t\geq
	 	 	T_0,
	 	 	\end{equation} which proves the exponential decay  to problem (\ref{wn}).

 Combining \eqref{ineqs}, \eqref{exp}, and \eqref{ident_9}, it follows that
	 	 \begin{equation}
	 	 \label{bound_H1}\|y_n(t)\|^2_{H_0^1(\Omega)}+\psi(J_{n}(y_n)) \leq \frac{C_0C}{\lambda_0\epsilon^2}\|y_{n0}\|_{\mathcal{X}}\,.
	 	 \end{equation}

	 	 The inequality (\ref{bound_H1}) and the boundedness of the sequence $\{y_{n,0}\}$ in ${\mathcal{X}}$ enable us to conclude that
	 	 \begin{eqnarray}
	 	 &&\label{bounded_yn}\{y_n\}\hspace{1.1cm}\hbox{ is bounded in } \quad L^{\infty}(0,T; H_0^1(\Omega))\\
	 	 &&\label{bounded_J_{p,n}}\{J_{n}(y_n)\}\quad
	 	 \hbox{ is bounded in }L^\infty(0,T; L^{(p+2)}(\Omega))\hookrightarrow L^{(p+2)}(0,T; L^{(p+2)}(\Omega)).
	 	 \end{eqnarray}
	 	 Notice that
	 	 \bq\label{BnJn}\mathcal{B}_n(y_n)=\mathcal{B}(J_n(y_n))=|\,J_n(y_n)\,
	 	 |^p\, J_n( y_n)\,
	 	 .\eq So,  from (\ref{bounded_J_{p,n}}) 
	 	 and (\ref{BnJn}),  we get
	 	 \bq &&\label{bounded_B_{p,n}}\{\mathcal{B}_{n}\,y_n\}\quad\hbox{ is bounded in
	 	 }\quad L^{(p+2)'}(0,T; L^{(p+2)'}(\Omega )).
	 	 \eq
	 	 On the other hand, by (\ref{bounded_yn})  and  (\ref{bounded_B_{p,n}})  we
	 	 observe
	 	 \begin{eqnarray*}\norm{\partial_t\,y_n}{\mathcal{X}'}
	 	 	&=&\sup_{\|{\varphi}\|_{ \mathcal{X}}=1}\,
	 	 	\left\{\left({\partial_t\,y_n},{\varphi}\right)_{L^2(\Omega)}\right\}\\
	 	 	&=&\sup_{\|{\varphi}\|_{\mathcal{X}}=1}\{({i\,\Delta\,y_n},
	 	 	\varphi)_{L^2(\Omega)}-\left({i\,\mathcal{B}_{n}(y_n)},{\varphi}\right)_{L^2(\Omega)}-(a(x)\,y_n,\varphi)_{L^2(\Omega)}\}\\
	 	 	&\leq&\sup_{\norm{\varphi}{\mathcal{X}}=1}
	 	 	\left\{\norm{\nabla\,y_n}{L^2(\Omega)}\,\norm{\nabla\,\varphi}{L^2(\Omega)}+\norm{\mathcal{B}_{n}\,y_n}{L^{(p+2)'}(\Omega)}\,
	 	 	\norm{\varphi}{L^{p+2}(\Omega)}\right.\\
	 	 	&+&\left. \norm{a}{\infty}\,\norm{y_n}{L^{2}(\Omega)}\,
	 	 	\norm{\varphi}{L^{2}(\Omega)}\right\}<+\infty,\end{eqnarray*}
	 	 so that
	 	 \bq\label{bounded_ynt}\{\partial_t\,y_n\}
	 	 \quad\hbox{ is bounded in }\quad
	 	 L^{\infty}(0,T; \mathcal{X}')\,.\eq
	 	
	 	 Combining \eqref{bounded_yn}, \eqref{bounded_J_{p,n}}, \eqref{bounded_B_{p,n}} and \eqref{bounded_ynt}, it follows that $\{y_n\}$ has a subsequence
	 	 (still denoted by $\{y_n\}$) such that
	 	 \bq\label{weak_yn}y_n&\stackrel{\ast}{\rightharpoonup}&\,y\quad\quad
	 	 \hbox{ in }\quad L^{\infty}(0,T; H_0^1(\Omega))\,.\\
	 	 \label{weak_JnynY}J_{n}(y_n)&\stackrel{\ast}{\rightharpoonup}&\,\mathcal{Y}\quad\,\,\,\,\,
	 	 \hbox{ in }\quad  L^\infty(0,T; L^{p+2}(\Omega))\,.\\
	 	 \label{weak_Bnyn}\mathcal{B}_{n}(y_n)&\stackrel{\ast}{\rightharpoonup}&\,\mathcal{Z}\quad\,\,\,\,\,
	 	 \hbox{ in }\quad L^{\infty}(0,T; L^{(p+2)'}(\Omega))\,.\\
	 	 \label{weak_potyn}\partial_t\,y_n&\rightharpoonup&\,\partial_t\,y\hspace{0.41cm}
	 	 \hbox{ in }\quad L^{(p+2)^\prime}(0,T; \mathcal{X}^\prime)\,.
	 	 \eq
	 	
	 	 \medskip
	 	
	 	 \smallskip
	 	 By Aubin-Lions' Theorem, J. L. Lions, \cite[Lemma 5.2 on page 57]{Lions},  there exist a
	 	 $y\in\,L^2(0,T;L^2(\Om))$
	 	 and a subsequence $\{y_n\}$ (still denoted by $\{y_n\}$) such
	 	 that
	 	 \begin{eqnarray}
	 	 \label{strong_yn}y_n&\rightarrow&\,y\hspace{1.15cm}
	 	 \hbox{ in }\quad L^2(0,T; L^2(\Om))\\
	 	 \label{ae_yn}y_n&\rightarrow&\,y\quad
	 	 \hbox{ a. e.  in }\quad \Omega\,\times\,(0,T)\,.
	 	 \end{eqnarray}
	 	
	 	 \medskip

	 	 Note that the  operator $\mathcal{B}$ is also m-accretive when considered on $ \mathbb{C} $. So,  by Showalter,  \cite[page 211]{Show}, we have that the resolvents $ {J}_n $ given in \eqref{Jrn} are contractions in $ \mathbb{C},$ that is,
	 	 \begin{equation}
	 	 \label{Jrn_contraction}|J_n(z)-J_n(w)| \leq |z-w|, \,\forall\,z,w\in\,\mathbb{C},
	 	 \end{equation}  Note that in the pointwise sense $ \mathcal{B}_n $ and $ J_n $ are essentially the same operators given in the beginning of this section, except that we are considering them on $ \mathbb{C} $ instead $ L^2(\Om). $ \\
	 	
	 	 From above,  let's define
	 	 $$|||\,C\,|||=\inf\{|x|: \,x\in\,C\}.$$ Thanks to Showalter \cite[Proposition 7.1, item c,  page 211]{Show}, we obtain  \begin{equation}\label{prop_show}
	 	 |\mathcal{B}_n(w)| \leq ||| \mathcal{B}(w)|||=|\mathcal{B}(w)|,\,\forall\,w\in\,\mathbb{C},
	 	 \end{equation} where the  equality on the right hand side of \eqref{prop_show} is  due to the fact that the operator $ \mathcal{B} $ given in \eqref{B} is single-valued in $ \mathbb{C}. $\\
	 	
	 	 On the other hand, from \eqref{Bn}, we have
	 	 $\displaystyle w-J_n(w)=\frac1n\,\mathcal{B}_n(w). $
	 	 Thus, combining this fact with
	 	 \eqref{Jrn_contraction} and  \eqref{prop_show}, we obtain
	 	 \begin{equation}\label{ineq}
	 	 \begin{aligned}
	 	 |J_{n}(z)-w|&\leq |J_{n}(z)-J_{n}(w)|+|J_{n}(w)-w|\\
	 	 &\leq |z-w|+\frac{1}{n}\,|\mathcal{B}_{n}(w)|\\
	 	 &\leq|z-w|+\frac{1}{n}\,|\mathcal{B}(w)|,\,\forall\, w, z\in\,\mathbb{C}\,.
	 	 \end{aligned}
	 	 \end{equation}
	 	 It follows from \eqref{ae_yn} that
	 	 \begin{equation}\label{ae_absolute_value}
	 	 |y_n-y|  \rightarrow 0 \quad \text{ a.e. in } \Om \times (0,T)\,.
	 	 \end{equation}
	 	
	 	 \medskip
	 	
	 	 Now, let $ (x,t)\in\,\Omega \times (0,T) $ such that the convergence \eqref{ae_absolute_value} holds and
	 	 $z=y_n$ and $ w=y $ in \eqref{ineq} and letting $ n\rightarrow \infty $,  taking into account \eqref{ae_absolute_value},  it follows that
	 	 \begin{eqnarray}
	 	 \label{ae_Jryn}J_{n}(y_n)&\rightarrow&\,y\quad\hspace{0.55cm}
	 	 \hbox{ a. e.  in }\quad \Om\,\times\,(0,\infty)\,.
	 	 \end{eqnarray}
	 	
	 	 Moreover,  taking into account  (\ref{ae_Jryn}) and the fact that the map  $\mathcal{B}(z)=|z|^p\,z$ is continuous, we infer
	 	 \begin{eqnarray*}
	 	 	\mathcal{B}(J_n(y_n))&\rightarrow&\,\mathcal{B}(y)=|y|^p\,y\quad
	 	 	\hbox{ a. e.  in }\quad \Om\,\times\,(0,\infty)\,.
	 	 \end{eqnarray*}
	 	 Making use of the definition of the Yosida aproximations $ \mathcal{B}_n $ given in  (\ref{BnJn}), it results that
	 	 \begin{eqnarray}
	 	 \label{ae_Bpnyn}\mathcal{B}_{n}(y_n)&\rightarrow&\,|y|^p\,y\quad
	 	 \hbox{ a. e.  in }\quad \Om\,\times\,(0,\infty)\,.  \hspace{.5cm}
	 	 \end{eqnarray}

	 	 Now, combining \eqref{bounded_J_{p,n}}, (\ref{ae_Jryn})   and (\ref{bounded_B_{p,n}}), (\ref{ae_Bpnyn}),   we have, thanks to
	 	 Lions' Lemma, [J. L. Lions, \cite{Lions}, Lemma 1.3, page 12], the following convergences:
	 	 \bq
	 	 \label{weak_JnynRn}J_{n}(y_n)&\rightharpoonup&\,y\hspace{1.01cm}\hbox{ in }
	 	 L^{\infty}(0,T;L^{p+2}(\Om))\,.\\\mathcal{B}_{n}(y_n)&\rightharpoonup&\,|y|^p\,
	 	 y\quad\hbox { in }\label{weak_BpnynRn}
	 	 L^{(p+2)'}(0,T;L^{(p+2)'}(\Om))\,.
	 	 \eq
	 	 So, by convergences \eqref{weak_JnynY}, \eqref{weak_Bnyn} \eqref{weak_JnynRn} and \eqref{weak_BpnynRn},  we get that $\mathcal{Y}=y$ and $ \mathcal{Z}=|y|^p\,y $ almost everywhere in $ \Om\,\times\,(0,T). $
	 	
	 	 Moreover, the convergence \eqref{weak_JnynRn} allows us to infer jointly with  (\ref{weak_yn}) that
	 	 \begin{eqnarray}
	 	 \label{regular1}y\in\,L^{\infty}(0,T;\mathcal{X}).
	 	 \end{eqnarray}
	 	 Finally, let
	 	 $\varphi\in\,C_0^{\infty}([0,T);\mathcal{X})$. Then, from
	 	 (\ref{wn}), we have
	 	 \bqs&&\int_0^T -( y_n(t),\partial_t\,\varphi(t))_{L^2(\Omega)}
	 	 +i\,(\nabla\,y_n(t),\nabla
	 	 \varphi(t))_{L^2(\Omega)}\,dt\\
	 	 \nonumber&&+i\int_0^T[\langle|\,y_n(t)\,|^{p}\,y_n(t),\varphi(t)\rangle_{L^{
	 	 		(p+2)'}(\Omega),\, L^{p+2}(\Omega)}-i(a(x)\,y_n(t),\varphi(t))_{L^{2
	 	 	}(\Omega)}]
	 	 \,dt=0.\eqs
	 	 From  (\ref{weak_yn}) and (\ref{weak_BpnynRn})  by passing to the limit as $n\rightarrow\,\infty,$
	 	 we obtain the variational formula given in (\ref{weak1}).  \\
	 	
	 	 From (\ref{weak1}), it follows that $ y $ belongs to the space
	 	 $$
	 	 \mathcal{W} =\{y\in\,L^{2}(0,T;\mathcal{X})\,\hbox{ such that }\,\partial_t\,y\in\,L^{2}(0,T;\mathcal{X}')\}.
	 	 $$
	 	 Then, employing Showalter [\cite{Show}, proposition 1.2, page 106], we have that $ \mathcal{W} $ can be continuously embedded in the space $ C([0,T]; L^{2}(\Om)) $ and, therefore, combining this fact with (\ref{regular1}), we obtain that
	 	 $y$  satisfies Definition \ref{def1}.	 Moreover, from \eqref{exp}, \eqref{weak_yn} and
	 	 weak lower - semicontinuity of the norm, we obtain the decay estimate \eqref{exp}. Hence, the proof of Theorem \ref{theorem 2.1} is complete.

\section{Unbounded Domains}\label{section3}
	The results presented in this article for bounded domains extend easily to the whole space and exterior domains.  To this end, we consider the damping term $i a(x) y$ with $a(x) \geq a_0 >0$ in $\mathbb{R}^N \backslash B_{R^\prime}$ where $B_{R^\prime}$ represents a ball of radius $R^\prime>0$.
\begin{enumerate}[(i)]
	\item If {$\Omega=\mathbb{R}^N  $,} then we can take some $r>0$ such that $r>R^\prime$ and work locally in the bounded set $B_r$. Following the steps in the proof of Lemma \ref{lema1}, we can find $u=0$ in $B_r\backslash B_{R^\prime}$ and then employ Lemma \ref{Theo. 2.2} to conclude that $u=0$ in $B_{R^\prime}$ as well, and consequently $u=0$ everywhere because the ball $B_r$ was taken arbitrary.	
\item 	Similarly, the result remains valid for an exterior domain $\Omega:=\mathbb{R}^N \backslash \mathcal{O}$, where $\mathcal{O}$ is a compact star-shaped obstacle whose boundary $\Gamma_0$ is smooth and associated with Dirichlet b.c. as in \cite{las03} and  $m(x)\cdot \nu(x) \leq 0  $ on $ \Gamma_0 $. As in the case of the whole space, we can consider a ball $B_{R^\prime}$ which contains the obstacle strictly, namely, $\mathcal{O} \subset \subset B_{R^\prime}$ and we take, as before, $a(x) \geq a_0 >0$ in $\Omega \backslash B_{R^\prime}$. Now, the  observer $ x_0 $ must be taken in the interior of the obstacle $\mathcal{O}$. So, let us consider $r>0$ such that $r>R^\prime$. The idea is to employ Lemma \ref{Theo. 2.2} in order to conclude that if $u=0$ in $(\Omega \cap B_r) \backslash (\Omega \cap B_{R^\prime})$ then $u=0$ in $\Omega \cap B_{R^\prime}$.\vglue.1in
  For an observer $x_0$ located in the interior of the obstacle $\mathcal{O}$,  we have that the inner product $(x-x_0) \cdot \nu(x) \leq 0$ on $\Gamma_0$, namely, $\Gamma_0$ is the uncontrolled or unobserved part according to terminology used in \cite{las03}, so that the unique continuation
	principle presented by \cite{las03} is verified.	
	Finally, it is worth mentioning that in the context of unbounded domains, the convergence \eqref{weak_JnynRn} remains valid by considering  ideas similar to those used in Cavalcanti et al. \cite[(3.43)]{Cavalcanti0}.
	\begin{remark}
		It is important to mention that the UCP developed in  \cite{las03} can be naturally extended to a finite number of the observers $ x_1, x_2, \ldots, x_n $ with a finite number of respective compact star-shaped obstacles $ \mathcal{O}_i $ whose closures are pairwise disjoint.  To this end, one can simply use the following vector field:
		\begin{equation}
		\label{field}
		q(x):=\begin{cases}
x-x_j,\,j=1,\ldots, n \text{ with } x_j\in\,\mathcal{O}_j, x\in\,\Omega\\
\text{and smootly extended in }\Omega\backslash(\mathcal{O}_1 \cup \mathcal{O}_2\cup \ldots \cup \mathcal{O}_n).
		\end{cases}
		\end{equation}
	\end{remark}
\end{enumerate}


\section{Numerical Approximation}
In this final section, we will show some numerical results supporting \ref{theorem 2.1} in $\mathbb{R}^2$. In particular, a Finite Volume scheme is implemented.

\subsection{Presentation of the Scheme.}

We consider that the domain $\Omega \subset \mathbb{R}^2$ in \eqref{problema}. We approximate the domain using an admisible mesh (see \cite{gallouet}) composed by a set $\mathcal{T}$ of convex polygons, denoted as the \textit{control volumes} or \textit{cells}, a set of faces $\mathcal{E}$ contained in hyperplanes of $\mathbb{R}^2$, and a set of points $\mathcal{P}$, representing the centroids of the control volumes. The size of the mesh will be given by $h:= \max_{K\in\mathcal{T}} \{diam(K)\}$. \\

To generate the mesh, we have made use of the open-source code \texttt{PolyMesher} \cite{talischi}, which contructs Vorono\"i tessellations iteratively refined through a Lloyd's method in order to guarantee its regularity.

We will denote by $K \in \mathcal{T}$ a control volume or cell inside the mesh, which in turn has centroid $x_K \in \mathbb{R}^2$, a measure $m(K)$ (in our case: the area of $K$), a set of neighboring cells $\mathcal{N}(K)$, and a set $\mathcal{E}_K$ of faces $\sigma \in \mathcal{E}_K \subset \mathcal{E} = \mathcal{E}_{int} \cup \mathcal{E}_{ext}$, where $\mathcal{E}_{int}$ is the set of inner faces and $\mathcal{E}_{ext}$ is the set of boundary faces. We will also write $t_n = n\Delta t$ for a given timestep $\Delta t$. We will denote $y_K^n$ as the numerical approximation of the solution of problem \eqref{problema} over the cell $K$ at the time $t_n$. We will also write $y_K^{n+\frac{1}{2}}:= \frac{y_K^{n+1} + y_K^n}{2}$. $\forall K \in \mathcal{T}$, the proposed Finite Volume scheme for this problem will be defined as follows;
\small
\begin{equation}\label{esq_num}
\begin{cases}  i m(K_i)\frac{y_K^{n+1} - y_K^n}{\Delta t} + \sum_{\sigma \in \mathcal{E}_K} F^{n+\frac{1}{2}}_{K,\sigma} - \frac{m(K)}{2p}\frac{|y_K^{n+1}|^{2p} - |y_K^{n}|^{2p}}{|y_K^{n+1}|^2 - |y_K^n|^2}(y_K^{n+1} + y_K^n) + im(K)a(x_K)y_K^{n+\frac{1}{2}} = 0 \\
  F^n_{K,\sigma} = \tau_{\sigma} (y_L^n - y_K^n), \quad \sigma \in \mathcal{E}_{int}, \: \sigma = K|L,\quad L \in \mathcal{T} \\
  F^n_{K,\sigma} = -\tau_\sigma y_K^n,\quad \sigma \in \mathcal{E}_{ext}: \sigma \in \mathcal{E}_K \\
  \tau_\sigma = m(\sigma)/|x_K - x_L|,\quad \sigma \in \mathcal{E}_{int}, \quad L \in \mathcal{T}: \sigma = K|L \\
  \tau_\sigma = m(\sigma)/d(x_K,\sigma), \quad \sigma \in \mathcal{E}_{ext}: \sigma \in \mathcal{E}_K
\end{cases}
\end{equation}
\normalsize
The discretization of the nonlinear term comes from the work of Delfour, Fortin and Payre \cite{delfour}, which was proposed in order to preserve the Energy at $H^1$ level if there is no damping term. The numerical solution over he whole domain $[0,T]\times \Omega$ will de denoted by $y_{\mathcal{T},\Delta t}$, such that $y_{\mathcal{T},\Delta t}(x_K,t_n) = y_K^n$. In some cases, we will write $y^n$ instead of $y_{\mathcal{T},\Delta t}(t_n)$ for the sake of clarity.

Given the symmetric structure of the matrix involved in the induced linear system of equations, a GMRES method is used to solve it. The nonlinear problem is solved using a Picard Fixed Point iteration with a tolerance equal to $10^{-6}$ before moving to the next timestep.

\subsection{Properties and convergence analysis.}

In order to state the properties of the scheme \eqref{esq_num}, we will need some notation. We will denote the discrete $L^2$ norm as follows:
\begin{equation*}
  ||y^n||^2_{L^2_\mathcal{T}(\Omega)} := \sum_{K\in\mathcal{T}} |y_K^n|^2 m(K).
\end{equation*}
In a similar fashion, we define the discrete $L^{2p}$ norm as
\begin{equation*}
      ||y^n||^{2p}_{L^{2p}_{\mathcal{T}}(\Omega)} := \sum_{K\in \mathcal{T}} |y_K^n|^{2p} m(K).
\end{equation*}
The discrete version of the $H_0$ norm will be defined as:
\begin{equation*}
  ||y^n||^2_{H_{0,\mathcal{T}}^1(\Omega)} = \sum_{\sigma \in \mathcal{E}}\tau_\sigma |D_\sigma y^n|^2,
\end{equation*}
where $\tau_\sigma$ is defined as in \eqref{esq_num}, and for $K\in\mathcal{T}$ and $L\in \mathcal{N}(K)$,
\begin{equation*}
  D_\sigma y^n =
  \begin{cases}
    y_L^n - y_K^n,\quad \text{ if } \sigma=K|L \in \mathcal{E}_{int} \\
    -y_K^n,\qquad \quad \: \text{   if } \sigma \in \mathcal{E}_{ext}.
  \end{cases}
\end{equation*}
The following property holds:
\begin{theorem}\label{converg_num}
  The numerical scheme \eqref{esq_num} admits the existence of a unique solution $y_{\mathcal{T},\Delta t}$.
\end{theorem}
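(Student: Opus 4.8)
The plan is to argue one time step at a time, by induction on $n$ (the base case being the prescribed initial data $y^0_K$). Fixing $y^n=(y^n_K)_{K\in\mathcal{T}}$, I regard the first line of \eqref{esq_num} as an equation $P(w)=0$ for the unknown $w=y^{n+1}\in\mathbb{C}^{|\mathcal{T}|}$, where $P$ gathers the whole left-hand side (with the flux relations substituted in). Throughout I use the discrete scalar product $(u,v)_{\mathcal{T}}:=\sum_{K\in\mathcal{T}}m(K)\,u_K\overline{v_K}$ attached to $\|\cdot\|_{L^2_{\mathcal{T}}(\Omega)}$. The only delicate point for the continuity of $P$ is the Delfour--Fortin--Payre divided difference $\frac{|w_K|^{2p}-|y^n_K|^{2p}}{|w_K|^2-|y^n_K|^2}$, which I extend by its limiting value $p\,|y^n_K|^{2(p-1)}$ whenever $|w_K|=|y^n_K|$; with this convention $P:\mathbb{C}^{|\mathcal{T}|}\to\mathbb{C}^{|\mathcal{T}|}$ is continuous.

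The engine of the existence part is a discrete mass identity, mirroring \eqref{ynL2}. Multiplying \eqref{esq_num} by $m(K)\overline{y^{n+1/2}_K}$, summing over $K$, and taking imaginary parts, I expect three structural simplifications: a discrete summation by parts turns the flux sum into $-\sum_{\sigma}\tau_\sigma|D_\sigma y^{n+1/2}|^2$, which is \emph{real}; the nonlinear term is real because its real coefficient multiplies $(y^{n+1}_K+y^n_K)\overline{y^{n+1/2}_K}=2|y^{n+1/2}_K|^2$; and the time term contributes $\tfrac{1}{2\Delta t}(\|y^{n+1}\|^2_{L^2_{\mathcal{T}}}-\|y^n\|^2_{L^2_{\mathcal{T}}})$. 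What survives is
\[
\tfrac{1}{2\Delta t}\big(\|y^{n+1}\|^2_{L^2_{\mathcal{T}}}-\|y^n\|^2_{L^2_{\mathcal{T}}}\big)+\sum_{K\in\mathcal{T}}m(K)\,a(x_K)\,|y^{n+1/2}_K|^2=0,
\]
so the discrete mass is non-increasing. Crucially, the same computation carried out for an \emph{arbitrary} $w$ (not a solution) gives $\Im\,(P(w),w^{1/2})_{\mathcal{T}}=\tfrac{1}{2\Delta t}(\|w\|^2_{L^2_{\mathcal{T}}}-\|y^n\|^2_{L^2_{\mathcal{T}}})+\sum_K m(K)a(x_K)|w^{1/2}_K|^2$, since the real flux and real nonlinear terms cannot contribute to the imaginary part.

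For existence I would invoke the standard corollary of Brouwer's theorem: a continuous map $\Phi$ on $\mathbb{C}^{|\mathcal{T}|}$ with $\Re\,(\Phi(Z),Z)_{\mathcal{T}}>0$ on a sphere $\{\|Z\|_{L^2_{\mathcal{T}}}=\rho\}$ must vanish somewhere inside the ball. Passing to the half step $Z:=y^{n+1/2}$ (so $w=2Z-y^n$) and setting $\Phi(Z):=-i\,P(2Z-y^n)$, one has $\Re\,(\Phi(Z),Z)_{\mathcal{T}}=\Im\,(P(w),w^{1/2})_{\mathcal{T}}\ge\tfrac{1}{2\Delta t}(\|2Z-y^n\|^2_{L^2_{\mathcal{T}}}-\|y^n\|^2_{L^2_{\mathcal{T}}})$ by the identity just noted, and the triangle inequality bounds this below by $\tfrac{2}{\Delta t}\|Z\|_{L^2_{\mathcal{T}}}(\|Z\|_{L^2_{\mathcal{T}}}-\|y^n\|_{L^2_{\mathcal{T}}})$. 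Any $\rho>\|y^n\|_{L^2_{\mathcal{T}}}$ then makes the pairing strictly positive on that sphere, so $\Phi$ has a zero $Z^\ast$, and $y^{n+1}:=2Z^\ast-y^n$ solves \eqref{esq_num}.

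For uniqueness I would take two solutions $y^{n+1,1},y^{n+1,2}$ and their difference $e$. Subtracting the two schemes, the time, flux and damping parts are linear in $e$, while the two nonlinear terms differ by a quantity that is Lipschitz in $e$ on the a priori bounded set $\{\|w\|_{L^2_{\mathcal{T}}}\le\|y^n\|_{L^2_{\mathcal{T}}}\}$, which is bounded on the fixed mesh. Testing the difference equation with $\overline{e_K}$ and taking imaginary parts kills the (real) flux term, produces $\tfrac{1}{\Delta t}\|e\|^2_{L^2_{\mathcal{T}}}$ from the time term and a nonnegative damping term, and leaves the nonlinear difference bounded by $\tfrac{L}{2p}\|e\|^2_{L^2_{\mathcal{T}}}$ with $L=L(\|y^n\|_{L^2_{\mathcal{T}}},\mathcal{T},p)$; this forces $(\tfrac{1}{\Delta t}-\tfrac{L}{2p})\|e\|^2_{L^2_{\mathcal{T}}}\le0$, hence $e\equiv0$. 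Alternatively, taking \emph{real} parts and exploiting the monotonicity of the defocusing Delfour--Fortin--Payre nonlinearity makes both the flux and nonlinear contributions nonpositive while summing to zero, which forces $D_\sigma e=0$ on every face (including boundary faces, where $D_\sigma e=-e_K$) and therefore $e\equiv0$ on the connected mesh. The main obstacle throughout is precisely this nonlinear term: for existence it is only its continuous extension across $|w_K|=|y^n_K|$ (and across $y^n_K=0$ when $p$ is small) that needs care, the rest being the structural mass identity; for uniqueness the real work is the Lipschitz, respectively monotonicity, estimate on the a priori bounded set, which is what tames the indefinite imaginary part generated by the $i$ in the Schrödinger dynamics.
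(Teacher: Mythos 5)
Your route is genuinely different from the paper's, and for the existence half it is the more defensible one. The paper treats \eqref{esq_num} as if it were a linear system in the unknown $y^{n+1}$: it checks that the homogeneous problem (the case $y^n\equiv 0$) admits only the trivial solution and then appeals to finite dimensionality, which is the injectivity-implies-bijectivity argument for linear maps. Since the Delfour--Fortin--Payre coefficient depends on $y^{n+1}$, that argument really only covers the linearized system solved at each Picard iteration, not the nonlinear scheme itself. Your replacement --- the mass identity $\Im\,(P(w),w^{1/2})_{\mathcal T}=\tfrac{1}{2\Delta t}\big(\|w\|^2_{L^2_{\mathcal T}}-\|y^n\|^2_{L^2_{\mathcal T}}\big)+\sum_K m(K)a(x_K)|w_K^{1/2}|^2$ computed for \emph{arbitrary} $w$, fed into the acute-angle corollary of Brouwer's theorem on the sphere $\|Z\|_{L^2_{\mathcal T}}=\rho>\|y^n\|_{L^2_{\mathcal T}}$ --- is the standard and correct way to obtain existence for an implicit nonlinear step, and the supporting algebra (reality of the flux and nonlinear pairings against $\overline{y^{n+1/2}}$, the bound $\|2Z-y^n\|^2-\|y^n\|^2\ge 4\|Z\|(\|Z\|-\|y^n\|)$) checks out. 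The one caveat you flag but do not resolve is continuity of $P$: when $y^n_K=0$ the nonlinear term reduces to $|w_K|^{2p-2}w_K$, which extends continuously through $w_K=0$ only for $p>1/2$; for $p\le 1/2$ Brouwer does not apply directly and vanishing cells need a separate treatment (the paper, which allows all $p>0$, ignores this entirely).

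The genuine gap is in uniqueness. Your Lipschitz estimate yields $(\tfrac{1}{\Delta t}-\tfrac{L}{2p})\|e\|^2_{L^2_{\mathcal T}}\le 0$, which forces $e=0$ only under the time-step restriction $\Delta t<2p/L$, where $L$ is the local Lipschitz constant of the divided-difference nonlinearity on the a priori ball (and this constant degenerates near vanishing cells when $p<1$). That is a \emph{conditional} uniqueness statement, whereas the theorem asserts unconditional unique solvability; you should either state the restriction as a hypothesis or close the gap. Your fallback --- monotonicity of the map $w\mapsto\frac{|w|^{2p}-|y^n_K|^{2p}}{|w|^2-|y^n_K|^2}(w+y^n_K)$, i.e.\ nonnegativity of $\Re\big[(g(w_1)-g(w_2))\overline{(w_1-w_2)}\big]$ --- is asserted but not proved, and unlike the monotonicity of $u\mapsto|u|^pu$ it is not a standard fact for this discretization. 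To be fair, the paper's own uniqueness sentence (``the linear system \dots has finite dimension \dots hence has unique solution'') is not a proof either; but as written your argument establishes existence cleanly and uniqueness only for sufficiently small $\Delta t$, and that limitation should be made explicit.
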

\begin{proof}
  For a given $n \in \{0,1,\dots,N\}$, and assuming that $y_K^n = 0, \forall K \in \mathcal{T}$, we take \eqref{esq_num} and multiply it by $\overline{y}_K^{n+1}$, sum over $K\in \mathcal{T}$, and extract the imaginary part. This will lead us to conclude that $y_K^{n+1} = 0,\: \forall K \in \mathcal{T}$, and hence the existence of solutions is proved. Uniqueness follows after noticing that the linear system induced by the numerical scheme has finite dimension with respect to the vector of unknowns $y_K^{n+1}$, and hence has unique solution.
\end{proof}
Let us define the discrete version of the mass functional $E_0(y(t))$ as follows:
  \begin{equation*}
    E_0^{(n)} := \frac{1}{2}\sum_{K\in \mathcal{T}}|y_K^{n}|^2m(K), \quad n \in \mathbb{N}.
  \end{equation*}
If we multiply the numerical scheme by $\overline{y}_K^{n+\frac{1}{2}}$, sum over $K\in \mathcal{T}$, and extract the imaginary part, we get the following result:
\begin{theorem}
  If $a(x) \equiv 0,\: \forall x\in \Omega$ in \eqref{esq_num}, then the following property is true $\forall n\in \mathbb{N}$:
  \begin{equation}\label{l2_num}
    E_0^{(n)} = E_0^{(n+1)}
  \end{equation}
  If $a(x) \geq a_0 > 0,\: x\in \omega \subset \Omega$, then
  \begin{equation*}
    E_0^{(0)} \geq E_0^{(n)}, \quad \forall n \in \mathbb{N}.
  \end{equation*}
\end{theorem}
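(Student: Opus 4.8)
The plan is to reproduce at the discrete level the energy identity \eqref{ynL2} that underlies the continuous theory. Following the indication preceding the statement, I would multiply the scheme \eqref{esq_num} by $\overline{y}_K^{n+\frac12}$, sum over all control volumes $K\in\mathcal{T}$, and take imaginary parts. The crux is that, of the four groups of terms, the flux and the nonlinear contributions reduce to real quantities (and so disappear from the imaginary part), while the discrete time derivative reproduces the mass increment and the damping produces a sign-definite remainder.

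For the discrete time derivative, writing $y_K^{n+\frac12}=\tfrac12(y_K^{n+1}+y_K^n)$ one finds
\[
(y_K^{n+1}-y_K^n)\,\overline{y_K^{n+\frac12}}=\tfrac12\bigl(|y_K^{n+1}|^2-|y_K^n|^2\bigr)+i\,\Im\bigl(y_K^{n+1}\,\overline{y_K^n}\bigr),
\]
so that after multiplication by $i\,m(K)/\Delta t$ only the first (real) summand contributes to the imaginary part, yielding $\tfrac{m(K)}{2\Delta t}(|y_K^{n+1}|^2-|y_K^n|^2)$; summing over $K$ this is exactly $\tfrac{1}{\Delta t}\bigl(E_0^{(n+1)}-E_0^{(n)}\bigr)$. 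For the nonlinear term I would note that $(y_K^{n+1}+y_K^n)\overline{y_K^{n+\frac12}}=\tfrac12|y_K^{n+1}+y_K^n|^2$ is real and the Delfour--Fortin--Payre difference quotient $\frac{|y_K^{n+1}|^{2p}-|y_K^n|^{2p}}{|y_K^{n+1}|^2-|y_K^n|^2}$ is a real scalar, so the whole term is real and drops out --- this is precisely the mass-conserving feature for which this discretization was chosen.

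The flux term I would handle by a discrete summation by parts. Collecting the two contributions of each interior face $\sigma=K|L$ and using $F_{K,\sigma}=-F_{L,\sigma}$ with $F_{K,\sigma}=\tau_\sigma(y_L-y_K)$, together with the Dirichlet flux $F_{K,\sigma}=-\tau_\sigma y_K$ on exterior faces, the double sum telescopes into $-\sum_{\sigma\in\mathcal{E}}\tau_\sigma|D_\sigma y^{n+\frac12}|^2=-\|y^{n+\frac12}\|_{H_{0,\mathcal{T}}^1(\Omega)}^2$, which is real and hence invisible to the imaginary part. The damping term, on the other hand, gives $i\,m(K)a(x_K)|y_K^{n+\frac12}|^2$, whose imaginary part is the nonnegative quantity $m(K)a(x_K)|y_K^{n+\frac12}|^2$.

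Assembling the four pieces gives the discrete dissipation identity
\[
\frac{E_0^{(n+1)}-E_0^{(n)}}{\Delta t}+\sum_{K\in\mathcal{T}}m(K)\,a(x_K)\,|y_K^{n+\frac12}|^2=0.
\]
If $a\equiv0$ the sum vanishes and \eqref{l2_num} follows immediately; if $a\ge a_0>0$ on $\omega$ the sum is nonnegative, so $E_0^{(n+1)}\le E_0^{(n)}$ and a one-line induction gives $E_0^{(0)}\ge E_0^{(n)}$ for every $n$. I do not expect a genuine obstacle: the only point requiring care is the bookkeeping in the discrete summation by parts --- correctly pairing each interior face and treating the boundary faces through the Dirichlet flux --- so as to arrive at a manifestly real, sign-definite expression.
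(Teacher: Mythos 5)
Your proof is correct and is exactly the argument the paper intends: the authors only state ``multiply the scheme by $\overline{y}_K^{n+\frac{1}{2}}$, sum over $K\in\mathcal{T}$, and extract the imaginary part,'' and your computation fills in precisely those details, arriving at the discrete dissipation identity $\frac{E_0^{(n+1)}-E_0^{(n)}}{\Delta t}+\sum_{K\in\mathcal{T}}m(K)\,a(x_K)\,|y_K^{n+\frac12}|^2=0$ from which both claims follow. No discrepancy with the paper's approach.
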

A consequence of the previous procedure reads as follows:
\begin{corollary}
  Let $y_{\mathcal{T},\Delta t}$ be the solution of \eqref{esq_num} such that $ E_0^{(0)} < \infty$. Then, there exists a constant $C_\infty$, depending on $y^0$ and $T$, such that
  \begin{equation}\label{lInf}
    ||y_{\mathcal{T},\Delta t}||_\infty < C_\infty
  \end{equation}
  where $||y^n||_\infty := \max_{K\in \mathcal{T}}|y^n_K|$.
\end{corollary}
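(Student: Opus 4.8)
The plan is to establish, at the discrete level, exactly the two a priori estimates that the paper has already carried out in the continuous setting, and then to upgrade them to a pointwise bound by a logarithmic Sobolev inequality in the plane. The first ingredient is immediate: the theorem preceding the corollary gives $E_0^{(n)}\le E_0^{(0)}$ for every $n$, whence the \emph{uniform} mass bound $\|y^n\|_{L^2_\mathcal T(\Omega)}\le\|y^0\|_{L^2_\mathcal T(\Omega)}$, with no dependence on $T$ or on the mesh. This alone cannot give the conclusion, since dividing by cell measures reintroduces the mesh; it serves only as input to the next step.

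Second, I would reproduce discretely the $H^1$-energy computation \eqref{ident_1}--\eqref{ident_9}. Pairing \eqref{esq_num} with the natural discrete energy multiplier (a discrete Laplacian plus the nonlinear term, tested against the discrete time derivative) and taking real parts, the flux term reconstructs the increment of $\tfrac12\|y^n\|_{H^1_{0,\mathcal T}}^2$ after discrete integration by parts, while the Delfour--Fortin--Payre discretization telescopes \emph{exactly} into the increment of the discrete potential $\tfrac{1}{2p}\|y^n\|_{L^{2p}_\mathcal T}^{2p}$ (the property for which that discretization was designed). The damping produces a dissipative $a$-weighted term together with an indefinite cross term, and, precisely as in \eqref{ident_5'}--\eqref{ident_7}, the structural assumption \eqref{anabla}, $|\nabla a|^2\lesssim a$, lets one absorb that cross term at the cost of a multiple of $\|y^n\|_{L^2_\mathcal T}^2$. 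Summing the resulting discrete inequality over the time steps and invoking the uniform mass bound yields the \emph{mesh-independent} estimate
\[
\tfrac12\|y^n\|_{H^1_{0,\mathcal T}(\Omega)}^2+\tfrac{1}{2p}\|y^n\|_{L^{2p}_\mathcal T(\Omega)}^{2p}\ \le\ C\big(\|y^0\|_{H^1_{0,\mathcal T}},\|y^0\|_{L^{2p}_\mathcal T}\big)\,(1+T),
\]
which is exactly where the stated dependence on $T$ enters (the factor $1+T$ coming from the time-summation of $\|y^n\|_{L^2_\mathcal T}^2$, in parallel with the integral $\int_0^T$ in \eqref{ident_9}).

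The genuine difficulty is the final step, because in dimension two a bound on $\|y^n\|_{H^1_{0,\mathcal T}}$ does \emph{not} by itself control $\|y^n\|_\infty$. I would close the argument with a discrete Brezis--Gallouet inequality: on the admissible Vorono\"i mesh there is a constant $C$ independent of $\mathcal T$ with
\[
\|y^n\|_\infty\ \le\ C\,\|y^n\|_{H^1_{0,\mathcal T}(\Omega)}\Big(1+\log\!\big(1+\|y^n\|_{H^2_\mathcal T(\Omega)}/\|y^n\|_{H^1_{0,\mathcal T}(\Omega)}\big)\Big)^{1/2},
\]
where $\|\cdot\|_{H^2_\mathcal T}$ is a discrete second-order seminorm. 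The prefactor is already controlled by the previous step, so it remains only to bound $\|y^n\|_{H^2_\mathcal T}$, and here an exponential-in-$T$ bound suffices because it enters only under the logarithm: testing \eqref{esq_num} against a discrete Laplacian of $y^{n+\frac12}$ and running a discrete Gronwall argument --- in which the dispersive flux and the damping are skew and only the nonlinearity, already Lipschitz on the uniformly bounded $H^1$-ball, feeds the constant --- gives $\|y^n\|_{H^2_\mathcal T}\le C(y^0)\,e^{CT}$. Combining the three displays yields $\|y^n\|_\infty\le C_\infty(y^0,T)$ for all $n$ with $n\Delta t\le T$, which is exactly the claim.

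I expect the main obstacle to be this third step, and it is twofold. First, one must establish the discrete Brezis--Gallouet inequality with a constant independent of the polygonal mesh produced by \texttt{PolyMesher}; this rests on a discrete $L^q$--$H^1$ Sobolev inequality with the sharp $O(\sqrt q)$ growth of its constant (available in the discrete functional framework of \cite{gallouet}) followed by the optimization in $q$ underlying the Brezis--Gallouet trick. Second, one must make sense of, and bound \emph{mesh-uniformly}, a discrete $H^2$ seminorm on an unstructured tessellation; the delicate point is a discrete elliptic-regularity/commutator estimate controlling a second difference of the flux by the first, which on a general Vorono\"i mesh is where mesh-independence is truly at risk. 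If a uniform discrete $H^2$ bound cannot be secured, the structurally honest fallback is to keep inside the logarithm any norm the scheme is known to propagate with at most exponential growth in $T$, since only its logarithm affects the final constant; the indispensable mesh-independent input remains the $H^1$-energy estimate of the second step.
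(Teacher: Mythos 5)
Your proposal misreads what the corollary actually asserts, and consequently takes a route that is both much heavier than the paper's and, as you yourself concede, incomplete. In the paper the corollary is a statement about a \emph{fixed} admissible mesh $\mathcal{T}$ and timestep $\Delta t$; the phrase ``a consequence of the previous procedure'' refers to the discrete mass law $E_0^{(n)}\le E_0^{(0)}$, and the proof is one line of finite-dimensional norm equivalence: for every cell $K$ and every $n$,
\begin{equation*}
|y_K^n|^2\, m(K)\ \le\ \sum_{K'\in\mathcal{T}}|y_{K'}^n|^2\, m(K')\ =\ 2E_0^{(n)}\ \le\ 2E_0^{(0)},
\qquad\text{hence}\qquad
\|y^n\|_\infty\ \le\ \left(\frac{2E_0^{(0)}}{\min_{K\in\mathcal{T}} m(K)}\right)^{1/2}.
\end{equation*}
The constant inevitably carries a dependence on the mesh through $\min_K m(K)$, which the statement tolerates because $\mathcal{T}$ is fixed (the advertised dependence on $T$ is not even needed, since the mass is non-increasing for all $n$). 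Note also that the hypothesis is only $E_0^{(0)}<\infty$: nothing beyond discrete mass is assumed about $y^0$.

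Your plan instead tries to manufacture a \emph{mesh-uniform} $L^\infty$ bound, and that cannot be done under the corollary's hypothesis: an $L^2_{\mathcal{T}}$ bound alone gives no mesh-uniform pointwise control (concentrate the initial datum on a single shrinking cell), so any mesh-uniform version necessarily requires the discrete $H^1$, $L^{2p}$, and --- in your scheme --- $H^2$ bounds on $y^0$ that the corollary does not assume. Beyond this hypothesis mismatch, the two load-bearing ingredients of your final step, namely a discrete Brezis--Gallouet inequality with mesh-independent constant on unstructured Vorono\"i tessellations and a mesh-uniform discrete $H^2$ propagation estimate, are precisely the points you flag as ``at risk,'' and you prove neither; there is no analogue of the discrete $H^2$ Gronwall argument anywhere in the paper. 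So as written the proposal does not establish the corollary: it sketches, modulo two unproven and genuinely hard lemmas, a different and stronger statement under stronger hypotheses. (Your instinct toward mesh-uniformity is understandable, since \eqref{lInf} is later invoked in the convergence theorem across a family of refined meshes, but that is an issue with the paper's later argument, not with this corollary.) Your intermediate step, the discrete $H^1$ energy estimate exploiting \eqref{anabla} and the Delfour--Fortin--Payre telescoping, is sound in outline, but it is exactly the content of the paper's Theorem \ref{E_h1}, which comes \emph{after} this corollary and is not an ingredient of it.
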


We will also define the discrete version of the Energy functional at $H^1$ level:
\begin{equation}\label{eH1}
  E_1^{(n)} := \frac{1}{2}\sum_{\sigma \in \mathcal{E}}\tau_\sigma |D_\sigma y^n|^2 + \sum_{K\in\mathcal{T}} \frac{1}{2p}|y_K^n|^{2p} m(K)
\end{equation}
The following property holds:
\begin{theorem}\label{E_h1}
  Let $y_{\mathcal{T},\Delta t}$ be the numerical solution induced by the scheme \eqref{esq_num} such that $||y_{\mathcal{T},\Delta t}^0||_{L^2_{\mathcal{T}(\Omega)}}^2 < \infty$. If $a(x) \equiv 0,\: \forall x \in \Omega$ in \eqref{esq_num}; then the following property holds true $\forall n \in \mathbb{N}$:
  \begin{equation}\label{ener_cons}
    E_1^{(n+1)} = E_1^{(n)}.
  \end{equation}
  If $a(x) \geq a_0 > 0,\; x \in \omega \subset \Omega$ and $a(x) \in W^{1,\infty}(\Omega)$, then there exists a constant $C>0$, depending on $T$, $a(x)$, and $y^0$, such that
    \begin{equation}\label{ener_acotada}
    E_1^{(n)} \leq E_1^{(0)} + C.
  \end{equation}
\end{theorem}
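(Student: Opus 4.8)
The plan is to reproduce, at the discrete level, the continuous $H^1$-energy computation that leads to \eqref{ident_9}. The natural multiplier for an $H^1$-type estimate is the conjugated discrete time increment $\overline{y_K^{n+1}-y_K^n}$, which plays the role of $\overline{\partial_t y}\,\Delta t$. First I would multiply the scheme \eqref{esq_num} by $\overline{y_K^{n+1}-y_K^n}$, sum over $K\in\mathcal{T}$, and take the real part. The term carrying the factor $i$ contributes $\Re\big(i\,m(K)|y_K^{n+1}-y_K^n|^2/\Delta t\big)=0$ and drops out. For the flux term I would perform a discrete summation by parts, using that each interior face $\sigma=K|L$ appears with opposite orientation in $\mathcal{E}_K$ and $\mathcal{E}_L$, to obtain $\sum_K\sum_{\sigma}F^{n+1/2}_{K,\sigma}\overline{(y_K^{n+1}-y_K^n)}=-\sum_\sigma\tau_\sigma D_\sigma y^{n+1/2}\,\overline{D_\sigma(y^{n+1}-y^n)}$; together with the elementary identity $\Re\big[(a+b)\overline{(a-b)}\big]=|a|^2-|b|^2$ this real part telescopes into the increment of the $H^1$-seminorm in \eqref{eH1}. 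The nonlinear term is handled by the same identity: the Delfour--Fortin--Payre quotient is designed precisely so that its product with $\Re[(y_K^{n+1}+y_K^n)\overline{(y_K^{n+1}-y_K^n)}]=|y_K^{n+1}|^2-|y_K^n|^2$ collapses to $\tfrac1{2p}(|y_K^{n+1}|^{2p}-|y_K^n|^{2p})$, reproducing the increment of the second term of \eqref{eH1}. Collecting these, I arrive at the discrete identity $E_1^{(n+1)}-E_1^{(n)}=\Re[\text{(damping contribution)}]$. When $a\equiv0$ the damping contribution is absent, and this identity immediately gives the conservation \eqref{ener_cons}.

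For the second assertion I would not leave the damping contribution in the form $-\sum_K m(K)a(x_K)\Im\big(y_K^{n+1/2}\overline{(y_K^{n+1}-y_K^n)}\big)$, since the time increment is not directly controllable; instead I would substitute $y_K^{n+1}-y_K^n$ from \eqref{esq_num} itself, exactly as the continuous proof replaces $\partial_t y_n$ by the right-hand side of the equation in passing from \eqref{ident_1} to \eqref{ident_5'}. After taking imaginary parts, the purely dissipative pieces emerge with the correct sign: a term $-\Delta t\sum_K m(K)a(x_K)\tfrac{G_K}{4p}|y_K^{n+1}+y_K^n|^2\le0$ (where $G_K$ is the DFP quotient, nonnegative since $t\mapsto t^p$ is increasing), which is the discrete analogue of the nonlinear dissipation $-\int_\Omega a(x)|J_n(y_n)|^{p+2}\,dx$ in \eqref{bridge_Bnyn0}, and, after a \emph{second} discrete summation by parts applied to $a\,y^{n+1/2}$, a term $-\Delta t\sum_\sigma\tau_\sigma \bar a_\sigma|D_\sigma y^{n+1/2}|^2\le0$ with $\bar a_\sigma=\tfrac12(a(x_K)+a(x_L))\ge0$, the discrete analogue of $-\int_\Omega a|\nabla y|^2$. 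What remains is the discrete counterpart of the troublesome cross term $-\Re(\nabla y,\nabla a\,y)$ of \eqref{ident_5'}, namely $-\Delta t\,\Re\sum_\sigma\tau_\sigma (D_\sigma a)\,\tilde y_\sigma\,\overline{D_\sigma y^{n+1/2}}$, produced by the discrete product rule $D_\sigma(a\,y)=\bar a_\sigma D_\sigma y+(D_\sigma a)\tilde y_\sigma$, where $\tilde y_\sigma=\tfrac12(y_K^{n+1/2}+y_L^{n+1/2})$.

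The main obstacle is the estimation of this cross term, which is exactly where assumption \eqref{anabla} enters. I would bound it by $\sum_\sigma\tau_\sigma|D_\sigma a|\,|\tilde y_\sigma|\,|D_\sigma y|$ and apply Young's inequality in the split form $\varepsilon\,\tau_\sigma\bar a_\sigma|D_\sigma y|^2+\tfrac1{4\varepsilon}\tau_\sigma\frac{|D_\sigma a|^2}{\bar a_\sigma}|\tilde y_\sigma|^2$, so that the $\varepsilon$-part is absorbed into the good dissipative term above for $\varepsilon$ small, mirroring the passage from \eqref{ident_5'} to \eqref{ident_7}. The discrete form of \eqref{anabla}, namely $|D_\sigma a|^2\lesssim \bar a_\sigma$ (obtained from $|\nabla a|^2\lesssim a$ together with $|D_\sigma a|\le\|\nabla a\|_{L^\infty(\Omega)}|x_K-x_L|$ on the admissible mesh), then turns $|D_\sigma a|^2/\bar a_\sigma$ into a bounded factor, leaving a remainder $\lesssim \Delta t\,\|y^{n+1/2}\|^2_{L^2_\mathcal{T}(\Omega)}$. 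Summing the resulting inequality $E_1^{(n+1)}-E_1^{(n)}\le C'\Delta t\,\|y^{n+1/2}\|^2_{L^2_\mathcal{T}(\Omega)}$ over the time index, and using $\|y^{n+1/2}\|^2_{L^2_\mathcal{T}(\Omega)}\le E_0^{(n+1)}+E_0^{(n)}\le 2E_0^{(0)}$ from the already established $L^2$ monotonicity $E_0^{(0)}\ge E_0^{(n)}$ together with $n\Delta t\le T$, yields $E_1^{(n)}\le E_1^{(0)}+2C'E_0^{(0)}T$, which is \eqref{ener_acotada} with $C:=2C'E_0^{(0)}T$. The genuinely delicate points I anticipate are controlling $|D_\sigma a|^2/\bar a_\sigma$ uniformly in the undamped region where $a$ is small, and relating $\sum_\sigma\tau_\sigma|\tilde y_\sigma|^2$ to the discrete $L^2$ norm; both are mesh-geometric matters that I expect to hinge on the admissibility and regularity of the Vorono\"i mesh rather than on any new analytic ingredient.
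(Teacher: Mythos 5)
Your proposal is correct, and its skeleton coincides with the paper's: multiply the scheme by the conjugated discrete time increment, take real parts to obtain the discrete energy identity \eqref{eq3.0.1} (whence conservation \eqref{ener_cons} when $a\equiv 0$), substitute the scheme back into the damping contribution as in \eqref{eq3.0.2}--\eqref{eq3.0.3}, discard the sign-definite dissipative pieces, estimate what is left, and sum over the time index using $n\Delta t\le T$. The one place where you genuinely diverge is the treatment of the cross term coming from $\sum_K a(x_K)y_K^{n+\frac12}\sum_\sigma\overline{F}^{n+\frac12}_{K,\sigma}$: you symmetrize $a$ over each face via the product rule $D_\sigma(ay)=\bar a_\sigma D_\sigma y+(D_\sigma a)\tilde y_\sigma$, invoke a discrete form of \eqref{anabla} (namely $|D_\sigma a|^2\lesssim\bar a_\sigma$, which does follow because \eqref{anabla} makes $\sqrt a$ Lipschitz), and absorb the gradient part into the face dissipation by Young's inequality --- a faithful discretization of the continuous passage \eqref{ident_5'}--\eqref{ident_7}. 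The paper instead keeps the asymmetric reordering \eqref{eq3.0.4} and bounds the resulting terms crudely, using only $a\in W^{1,\infty}(\Omega)$ and Cauchy--Schwarz with no absorption and no appeal to \eqref{anabla}; both routes end at the same per-step increment of order $\Delta t\,\|y^0\|^2_{L^2_{\mathcal{T}}(\Omega)}$ and hence at \eqref{ener_acotada}. Your version is structurally cleaner and parallels the PDE proof, but it is more than is needed: since the statement only assumes $a\in W^{1,\infty}(\Omega)$, the paper's cruder bound suffices and sidesteps the delicate point you flag about controlling $|D_\sigma a|^2/\bar a_\sigma$ where $a$ is small. The other mesh-geometric issue you raise --- relating $\sum_\sigma\tau_\sigma|\tilde y_\sigma|^2$ (respectively $\sum_K\sum_{\sigma\in\mathcal{E}_K}\tau_\sigma|y_K^n|^2$ in the paper's last display) to the discrete $L^2$ norm --- is used implicitly by the paper as well and rests on the admissibility and regularity of the Vorono\"i mesh, exactly as you anticipate.
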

\begin{proof}
  We multiply \eqref{esq_num} by $\frac{\overline{y}_K^{n+1} - \overline{y}_K^n}{\Delta t}$, sum over $K\in\mathcal{T}$, and extract the real part. We get
  \begin{align}
    Re\Bigg(\sum_{K\in\mathcal{T}}\sum_{\sigma \in \mathcal{E}_K} &F^{n+\frac{1}{2}}_{K,\sigma}\frac{\overline{y}_K^{n+1} - \overline{y}_K^n}{\Delta t}\Bigg) - \sum_{K\in\mathcal{T}} \frac{m(K)}{2p\Delta t}\Big(|y_K^{n+1}|^{2p} - |y_K^{n}|^{2p} \Big) \label{eq3.0} \\
    + & Re\Big(i\sum_{K\in\mathcal{T}} m(K)a(x_K)y_K^{n+\frac{1}{2}}\frac{\overline{y}_K^{n+1}-\overline{y}_K^n}{\Delta t}\Big)  = 0 \nonumber.
  \end{align}
  After using the identity $Re(a(\overline{b}-\overline{a})) = \frac{1}{2}\big(|b|^2-|a|^2 - |b-a|^2\big)$ for $a,b\in \mathbb{C}$, and reordening the sum, the first term in \eqref{eq3.0} becomes
  \begin{align*}
    Re\Bigg(\sum_{K\in\mathcal{T}}\sum_{\sigma \in \mathcal{E}_K} F^{n+\frac{1}{2}}_{K,\sigma}\frac{\overline{y}_K^{n+1} - \overline{y}_K^n}{\Delta t}\Bigg) &= \sum_{\sigma \in \mathcal{E}}\frac{\tau_\sigma}{2} \Big(|y_L^n - y_K^n|^2 - |y_L^{n+1} - y_K^{n+1}|^2 \Big) \\
    &= \sum_{\sigma \in \mathcal{E}}\frac{\tau_\sigma}{2}\Big(|D_\sigma y^n|^2 - |D_\sigma^{n+1}|^2\Big).
  \end{align*}
  With this, \eqref{eq3.0} turns into the following:
  \begin{equation}
    \frac{1}{\Delta t}E_1^{(n+1)} = \frac{1}{\Delta t}E_1^{(n)} +  Re\Big(i\sum_{K\in\mathcal{T}} m(K)a(x_K)y_K^{n+\frac{1}{2}}\frac{\overline{y}_K^{n+1}-\overline{y}_K^n}{\Delta t}\Big). \label{eq3.0.1}
  \end{equation}
  If $a(x) \equiv 0$, then we get \eqref{ener_cons}. If not, then we will need to recall the following from the numerical scheme:
  \begin{equation}
    \frac{y_K^{n+1} - y_K^n}{\Delta t} = \frac{i}{m(K)}\sum_{\sigma \in \mathcal{E}_K} F^{n+\frac{1}{2}}_{K,\sigma} - \frac{i}{2p}\frac{|y_K^{n+1}|^{2p} - |y_K^{n}|^{2p}}{|y_K^{n+1}|^2 - |y_K^n|^2}(y_K^{n+1} + y_K^n) - a(x_K)y_K^{n+\frac{1}{2}}.  \label{eq3.0.2}
  \end{equation}
  Replacing \eqref{eq3.0.2} in \eqref{eq3.0.1} will lead us to study the following:
  \small
  \begin{align}
    i\sum_{K\in\mathcal{T}} m(K)a(x_K)y_K^{n+\frac{1}{2}}\frac{\overline{y}_K^{n+1}-\overline{y}_K^n}{\Delta t} &= \sum_{K\in\mathcal{T}} a(x_K)y_K^{n+\frac{1}{2}}\sum_{\sigma \in \mathcal{E}_K} \overline{F}^{n+\frac{1}{2}}_{K,\sigma} \nonumber \\
    & \: - \sum_{K\in\mathcal{T}} a(x_K) \frac{m(K)}{p}\frac{|y_K^{n+1}|^{2p} - |y_K^{n}|^{2p}}{|y_K^{n+1}|^2 - |y_K^n|^2}\big|y_K^{n+\frac{1}{2}}\big|^2 \label{eq3.0.3} \\
    & \: - i\sum_{K\in\mathcal{T}} m(K)(a(x_K))^2|y_K^{n+\frac{1}{2}}|^2. \nonumber
  \end{align}
 \normalsize
 After extracting the real part in \eqref{eq3.0.3} the third term at the right hand side vanishes and the second term is a strictly negative number. For the first term, again using the identity $Re(a(\overline{b}-\overline{a})) = \frac{1}{2}\big(|b|^2-|a|^2 - |b-a|^2\big)$ and reordering the sum, we get
 \footnotesize
 \begin{align}
   Re\Bigg(\sum_{K\in\mathcal{T}} a(x_K)y_K^{n+\frac{1}{2}}\sum_{\sigma \in \mathcal{E}_K} \overline{F}^{n+\frac{1}{2}}_{K,\sigma}\Bigg) &= \sum_{K\in\mathcal{T}}\sum_{\sigma \in \mathcal{E}_K} \frac{\tau_\sigma}{8}\Big(|y_K^{n+1}|^2 + |y_K^{n}|^2\Big) \big(a(x_L)-a(x_K) \big) \nonumber \\
   & -  \sum_{K\in\mathcal{T}}\sum_{\sigma \in \mathcal{E}_K} \frac{\tau_\sigma}{8}a(x_K)\Big(|y_L^{n+1} - y_K^{n+1}|^2 + |y_L^{n} - y_K^{n}|^2\Big) \label{eq3.0.4} \\
   & +  \sum_{K\in\mathcal{T}}\sum_{\sigma \in \mathcal{E}_K} \frac{\tau_\sigma}{4}a(x_K) Re\Big(y_K^{n+1}(\overline{y}_L^n - \overline{y}_K^n) + y_K^n(\overline{y}_L^{n+1} - \overline{y}_K^{n+1}) \Big). \nonumber
 \end{align}
\normalsize
The second term in \eqref{eq3.0.4} is strictly negative. Hence, and given the regularity condition of the damping function $a(x) \in W^{1,\infty}(\Omega)$, we can infer the existence of a constant $C_1$, depending on $a(x)$, such that
\small
 \begin{align*}
   Re\Bigg(\sum_{K\in\mathcal{T}} a(x_K)y_K^{n+\frac{1}{2}}\sum_{\sigma \in \mathcal{E}_K} \overline{F}^{n+\frac{1}{2}}_{K,\sigma}\Bigg) &\leq C_1 \sum_{K\in\mathcal{T}}\sum_{\sigma \in \mathcal{E}_K} \frac{\tau_\sigma}{8}\Big(|y_K^{n+1}|^2 + |y_K^{n}|^2\Big)  \nonumber \\
   & +  C_1\sum_{K\in\mathcal{T}}\sum_{\sigma \in \mathcal{E}_K} \frac{\tau_\sigma}{4}\Big|y_K^{n+1}(\overline{y}_L^n - \overline{y}_K^n) + y_K^n(\overline{y}_L^{n+1} - \overline{y}_K^{n+1}) \Big| \nonumber \\
   & \leq \frac{C_1}{8}\Big(||y^{n+1}||_{L^2_{\mathcal{T}(\Omega)}}^2 + ||y^{n}||_{L^2_{\mathcal{T}(\Omega)}}^2 \Big) \\
   & + \frac{C_1}{4}\sum_{K\in\mathcal{T}}\sum_{\sigma \in \mathcal{E}_K} \tau_\sigma \Big( 4|y_K^{n+1}|^2 + 4|y_K^n|^2 \Big) \\
   & \leq \frac{9}{4}C_1 ||y^0||_{L^2_{\mathcal{T}(\Omega)}}^2.
 \end{align*}
 \normalsize
 Hence, \eqref{eq3.0.1} will turn into
\begin{align*}
  \frac{1}{\Delta t}E_1^{(n+1)} &= \frac{1}{\Delta t}E_1^{(n)} +  Re\Big(i\sum_{K\in\mathcal{T}} m(K)a(x_K)y_K^{n+\frac{1}{2}}\frac{\overline{y}_K^{n+1}-\overline{y}_K^n}{\Delta t}\Big) \\
  &\leq \frac{1}{\Delta t}E_1^{(n)} + \frac{9}{4} C_1 ||y^0||_{L^2_{\mathcal{T}(\Omega)}}^2.
\end{align*}
Multiplying the previous result by $\Delta t$ and repeating the upper bound $n$ times will lead us to
\begin{equation*}
  E_1^{(n+1)} \leq E_1^{(0)} + \frac{9C}{4}n\Delta t ||y^0||_{L^2_{\mathcal{T}(\Omega)}}^2,
\end{equation*}
and because $||y^0||_{L^2_{\mathcal{T}(\Omega)}}^2 < \infty$, we can infer the existence of a constant $C$, depending on $T$, $y^0$, and $a(x)$, such that
\begin{equation*}
  E_1^{(n+1)} \leq E_1^{(0)} + C.
\end{equation*}
Thus, the theorem is proved.
\end{proof}
On the other hand, if we go back to \eqref{ener_acotada} and compare it with the definition \eqref{eH1}, we get the following result:
\begin{corollary}
  Let $y^n$ be the solution of \eqref{esq_num} such that $||y^0||_{L^2_{\mathcal{T}}(\Omega)}^2 < \infty$ and $E_1^{(0)} < \infty$. Then, there exist some constants $C_1$ and $C_2$, depending on $y^0$, $a(x)$, and $T$, such that
  \begin{equation}\label{cotaH1}
    ||y^n||_{H_{0,\mathcal{T}}^1(\Omega)} < C_1,\quad \forall n\in\mathbb{N}.
  \end{equation}
 and
  \begin{equation}\label{cotaL2p}
    ||y^n||_{L^{2p}_{\mathcal{T}}(\Omega)} < C_2,\quad \forall n\in\mathbb{N}.
  \end{equation}
\end{corollary}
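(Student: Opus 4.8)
The plan is to read off both bounds directly from the discrete $H^1$-energy estimate \eqref{ener_acotada} established in Theorem \ref{E_h1}, by recognizing that the energy functional $E_1^{(n)}$ splits into two manifestly nonnegative pieces: one equal to half the squared discrete $H_0^1$-norm, and the other a positive multiple of the $2p$-th power of the discrete $L^{2p}$-norm. Concretely, matching the definition \eqref{eH1} term-by-term against the definitions of the two discrete norms introduced just before Theorem \ref{E_h1}, I would first rewrite
\[
  E_1^{(n)} = \frac{1}{2}\,\|y^n\|_{H_{0,\mathcal{T}}^1(\Omega)}^2 + \frac{1}{2p}\,\|y^n\|_{L^{2p}_{\mathcal{T}}(\Omega)}^{2p}.
\]

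Since both summands on the right are nonnegative, each is individually dominated by $E_1^{(n)}$. I would then invoke the uniform-in-$n$ bound \eqref{ener_acotada}, namely $E_1^{(n)} \le E_1^{(0)} + C$, which is available precisely under the present hypotheses ($a\in W^{1,\infty}(\Omega)$, $a\ge a_0>0$ on $\omega$, and $\|y^0\|_{L^2_{\mathcal{T}}(\Omega)}^2<\infty$). Combining these gives
\[
  \tfrac{1}{2}\,\|y^n\|_{H_{0,\mathcal{T}}^1(\Omega)}^2 \le E_1^{(0)}+C
  \quad\text{and}\quad
  \tfrac{1}{2p}\,\|y^n\|_{L^{2p}_{\mathcal{T}}(\Omega)}^{2p} \le E_1^{(0)}+C,
\]
and taking the square root in the first inequality and the $2p$-th root in the second yields the claim with $C_1 := \sqrt{2\,(E_1^{(0)}+C)}$ and $C_2 := \bigl(2p\,(E_1^{(0)}+C)\bigr)^{1/(2p)}$, both finite because $E_1^{(0)}<\infty$ by hypothesis, and both independent of $n$.

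There is essentially no obstacle to overcome here: the corollary is a direct bookkeeping consequence of Theorem \ref{E_h1}. The only points worth recording explicitly are that the constant $C$ (and hence $C_1,C_2$) inherits its dependence on $T$, $a$, and $y^0$ from \eqref{ener_acotada}, and that the bounds are genuinely uniform in $n$ precisely because \eqref{ener_acotada} has already absorbed the time-accumulated contribution $\tfrac{9C}{4}\,n\Delta t\,\|y^0\|_{L^2_{\mathcal{T}}(\Omega)}^2$ into a single constant valid for all timesteps up to the final time $T$. Thus the entire argument is a two-line extraction from the energy identity, with no delicate estimate required.
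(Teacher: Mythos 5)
Your argument is correct and is essentially the paper's own: the corollary is stated as a direct consequence of comparing the uniform bound \eqref{ener_acotada} with the definition \eqref{eH1}, which decomposes $E_1^{(n)}$ into the two nonnegative terms $\tfrac12\|y^n\|_{H_{0,\mathcal{T}}^1(\Omega)}^2$ and $\tfrac{1}{2p}\|y^n\|_{L^{2p}_{\mathcal{T}}(\Omega)}^{2p}$, exactly as you do. Your explicit extraction of the constants $C_1$ and $C_2$ and the remark on their dependence on $T$, $a$, and $y^0$ simply spell out what the paper leaves implicit.
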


This upper bound will help us to prove the convergence of the numerical scheme.
\begin{theorem}
  For $m \in \mathbb{N}$, let $\{y_m\}_{m\in \mathbb{N}},\: y_m = y_{\mathcal{T}_m,\Delta t_m}(x,t)$ be a sequence of solutions of \eqref{esq_num} induced by their respective initial conditions $\{y_m^{0}\}_{m\in \mathbb{N}} \subset \mathcal{X}$, while using a sequence of admissible meshes $\mathcal{T}_m$ and timesteps $\Delta t_m$ such that $h_m \rightarrow 0$ and $\Delta t_m \rightarrow 0$ when $m\rightarrow \infty$. Then, there exists a subsequence of the sequence of numerical solutions, still denoted by $\{y_m\}_{m\in \mathbb{N}}$, which converges to the weak solution $y(t)$ given by the Definition \ref{def1} when $m \rightarrow \infty$.
\end{theorem}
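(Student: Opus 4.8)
The plan is to reproduce, at the discrete level, the compactness-and-limit argument already carried out in Section 2 for the Yosida-approximate solutions, with the mesh/time-step index $m$ playing the role of $n$ there; the uniform a priori bounds established above take the place of \eqref{bounded_yn}--\eqref{bounded_ynt}, and the limit is identified as a weak solution exactly as in \eqref{weak_yn}--\eqref{weak_BpnynRn}. First I would reconstruct from the discrete unknowns $\{y_K^n\}$ space-time functions: a function $y_m$ that is piecewise constant on each cell $K$ and on each time slab, together with a reconstructed discrete gradient built from the face differences $D_\sigma y^n$. The corollaries \eqref{lInf}, \eqref{cotaH1} and \eqref{cotaL2p} then furnish, uniformly in $m$, a bound for $y_m$ in $L^\infty(0,T;L^2(\Omega))$, a bound for the reconstructed gradient in $L^\infty(0,T;L^2(\Omega))$ (i.e.\ a uniform discrete $H_0^1$ bound), and a bound in $L^\infty(0,T;L^{2p}(\Omega))$. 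Via the discrete Sobolev and Poincaré inequalities on admissible meshes (see \cite{gallouet}) these yield weak-$\ast$ limits $y_m \stackrel{\ast}{\rightharpoonup} y$ in $L^\infty(0,T;H_0^1(\Omega))$ and, after relabelling, in $L^\infty(0,T;L^{p+2}(\Omega))$, paralleling \eqref{weak_yn}.

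To upgrade to strong convergence I would next establish a uniform bound on a discrete time-derivative of $y_m$ in $L^2(0,T;\mathcal{X}')$. Testing the scheme \eqref{esq_num} against an arbitrary $\varphi\in\mathcal{X}$ sampled at the centroids and summing over cells, the flux term is controlled by the uniform discrete $H_0^1$ bound, the damping term by the $L^2$ bound and $\|a\|_{L^\infty(\Omega)}$, and the nonlinear term by the $L^{2p}$ bound through the $L^{(p+2)'}$--$L^{p+2}$ pairing, exactly as in the estimate preceding \eqref{bounded_ynt}. With uniform bounds in $L^\infty(0,T;H_0^1(\Omega))$ together with the bound on the discrete time-derivative in $L^2(0,T;\mathcal{X}')$ in hand, a discrete Aubin--Lions--Simon compactness theorem for piecewise-constant finite-volume functions (such as the version in \cite{gallouet}) produces a subsequence with $y_m\to y$ strongly in $L^2(0,T;L^2(\Omega))$ and a.e.\ on $\Omega\times(0,T)$.

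I would then pass to the limit in the discrete weak formulation obtained by multiplying \eqref{esq_num} by $\overline{\varphi(x_K,t_n)}$ for $\varphi\in C_0^\infty(0,T;\mathcal{X})$ and summing in space and time. After summation by parts in the time index, the discrete-time-derivative term converges to $-(y,\partial_t\varphi)_{L^2(\Omega)}$; the two-point flux term converges to $i(\nabla y,\nabla\varphi)_{L^2(\Omega)}$ by the standard consistency of the admissible-mesh flux combined with the weak convergence of the reconstructed gradient; and the damping term converges to $-i(a\,y,\varphi)_{L^2(\Omega)}$, using $a\in W^{1,\infty}(\Omega)$ so that the piecewise-constant sampling $a(x_K)$ converges to $a$. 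These recover the linear terms of \eqref{weak1}.

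The main obstacle is the nonlinear term. Here I would combine the a.e.\ convergence $y_m\to y$ with the uniform $L^\infty(0,T;L^{2p}(\Omega))$ bound: by the mean value theorem the Delfour--Fortin--Payre difference quotient in \eqref{esq_num} is a consistent approximation of the continuous nonlinearity and converges a.e.\ to it along the subsequence, while the uniform higher-integrability bound makes the associated sequence bounded in $L^{(p+2)'}(0,T;L^{(p+2)'}(\Omega))$. Lions' lemma, applied as in \eqref{weak_BpnynRn}, then identifies the weak limit of the nonlinear term with $|y|^p y$, so the nonlinear contribution converges to $i\langle |y|^p y,\varphi\rangle_{L^{(p+2)'}(\Omega);\,L^{p+2}(\Omega)}$. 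Finally, the convergence $y_m^0\to y_0$ in $\mathcal{X}$ together with the embedding of $\mathcal{W}$ into $C([0,T];L^2(\Omega))$ (Showalter \cite{Show}) gives $y(0)=y_0$, so that the limit $y$ satisfies Definition \ref{def1}, which completes the proof.
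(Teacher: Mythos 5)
Your proposal is correct and follows essentially the same route as the paper: uniform discrete bounds from \eqref{lInf}, \eqref{cotaH1}, \eqref{cotaL2p}, a bound on the discrete time derivative in $\mathcal{X}'$ obtained by testing the scheme, Aubin--Lions compactness for strong $L^2(0,T;L^2(\Omega))$ convergence, and passage to the limit in the discrete weak formulation with the nonlinear term identified via a.e.\ convergence and boundedness (Lions' lemma). The only cosmetic difference is that the paper transfers the two-point flux onto $\Delta\overline{\varphi}$ of the test function and handles the nonlinear term in the $\mathcal{X}'$ estimate by an explicit case analysis in $p$, whereas you keep the gradient pairing and the $L^{(p+2)'}$--$L^{p+2}$ duality directly; both are standard and equivalent here.
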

\begin{proof}
  We will start by proving that $\partial_t y_m$ is bounded in $\mathcal{X}'$; this is
  \small
  \begin{align}
    ||\partial _t y_m||_{\mathcal{X}_m'} &:= \sup_{||\varphi||_{\mathcal{X}_m} =1} \Big\{ \Big| \big(\partial y_m,\varphi \big)_{L^2_{\mathcal{T}_m}(\Omega)} \Big| \Big\} \nonumber \\
    &= \sup_{||\varphi||_{\mathcal{X}_m} =1} \Bigg\{ \Bigg| i\bigg(\sum_{K\in\mathcal{T}_m}\sum_{\sigma \in \mathcal{E}_K} \tau_\sigma \big(y_L^{n+\frac{1}{2}} - y_K^{n+\frac{1}{2}}\big)  \overline{\varphi}_K\bigg)  \nonumber  \\
    & \quad -\frac{i}{2p} \sum_{K\in\mathcal{T}_m}  \bigg(\frac{|y_K^{n+1}|^{2p} - |y_K^{n}|^{2p}}{|y_K^{n+1}|^2 - |y_K^n|^2}(y_K^{n+1} + y_K^n) \overline{\varphi}_K m(K) \bigg) - \sum_{K\in\mathcal{K}} \big(a(x_K)y_K^{n+\frac{1}{2}}\overline{\varphi}_K m(K) \big) \Bigg|  \Bigg\} \label{eq3.0.5} \\
    & \quad < \infty \nonumber.
  \end{align}
  \normalsize
  The first term in the right hand side of \eqref{eq3.0.5} can be rewritten as follows
    \begin{align}
    \sum_{n=0}^N &\sum_{K\in \mathcal{T}} \sum_{\sigma\in \mathcal{E}_K} \tau_{\sigma} (y_L^{n+\frac{1}{2}} - y_K^{n+\frac{1}{2}})\overline{\varphi}_K \Delta t = \nonumber \\
    &\sum_{n=0}^N \sum_{K|L\in \mathcal{E}_{int}} m(K|L) (y_L^{n+\frac{1}{2}} - y_K^{n+\frac{1}{2}})\frac{\overline{\varphi}_K - \overline{\varphi}_L}{d_{K|L}} \Delta t. \nonumber
    \end{align}
    After \eqref{cotaH1} and the regularity of $\varphi$, we can write
    \begin{equation}\label{eq3.0.6}
       \sum_{K\in\mathcal{T}_m}\sum_{\sigma \in \mathcal{E}_K} \tau_\sigma \big(y_L^{n+\frac{1}{2}} - y_K^{n+\frac{1}{2}}\big)  \overline{\varphi}_K < \infty.
    \end{equation}
    For the second term in \eqref{eq3.0.5}, we will consider three cases.
    \begin{itemize}
    \item[$\bullet$] If $p \leq 1$, we have
      \begin{align*}
        \Big|\sum_{K\in\mathcal{T}_m} \frac{|y_K^{n+1}|^{2p} - |y_K^{n}|^{2p}}{|y_K^{n+1}|^2 - |y_K^n|^2}(y_K^{n+1} + y_K^n) \overline{\varphi}_K m(K)\Big| &\leq \sum_{K\in\mathcal{T}_m} |(y_K^{n+1} + y_K^n) \overline{\varphi}_K| m(K)  \nonumber \\
        & \leq 2||\varphi||_{L_{\mathcal{T}_m}^\infty (\Omega)} ||y^0||^2_{L_{\mathcal{T}_m}^2 (\Omega)}
      \end{align*}
      which is bounded. \\

    \item[$\bullet$] If $1 < p < 2$, then
      \scriptsize
      \begin{align}
        \Big|\sum_{K\in\mathcal{T}_m} \frac{|y_K^{n+1}|^{2p} - |y_K^{n}|^{2p}}{|y_K^{n+1}|^2 - |y_K^n|^2}(y_K^{n+1} + y_K^n) \overline{\varphi}_K m(K)\Big| &\leq  2||\varphi||_{L_{\mathcal{T}_m}^\infty}||y^0||_{L_{\mathcal{T}_m}^\infty} \sum_{K\in\mathcal{T}_m} \bigg(|y_k^{n+1}|^{2p-2} + |y_K^n|^{2p-2} \bigg) m(K). \nonumber
      \end{align}
      \normalsize
      Using Young's inequality, we get
            \scriptsize
      \begin{align*}
        \sum_{K\in\mathcal{T}_m} \bigg(|y_k^{n+1}|^{2p-2} + |y_K^n|^{2p-2} \bigg) m(K) &\leq \sum_{K\in\mathcal{T}_m} \bigg(\bigg(\frac{2p-2}{2p}\bigg)\big(|y_k^{n+1}|^{2p} + |y_K^n|^{2p}\big) + \frac{2}{p} \bigg) m(K) \nonumber
      \end{align*}
      \normalsize
      which is also bounded due to \eqref{cotaL2p}, \eqref{lInf}, and by the fact that $|\Omega| < \infty$. \\

    \item[$\bullet$] If $p \geq 2$, then we have
      \scriptsize
      \begin{align}
        \Big|\sum_{K\in\mathcal{T}_m} \frac{|y_K^{n+1}|^{2p} - |y_K^{n}|^{2p}}{|y_K^{n+1}|^2 - |y_K^n|^2}(y_K^{n+1} + y_K^n) \overline{\varphi}_K m(K)\Big| &\leq  2||\varphi||_{L_{\mathcal{T}_m}^\infty}||y^0||_{L_{\mathcal{T}_m}^\infty} \sum_{K\in\mathcal{T}_m} \frac{p}{2}\bigg(|y_k^{n+1}|^{2p-2} + |y_K^n|^{2p-2} \bigg) m(K) \nonumber
      \end{align}
      \normalsize
      which is bounded by the same reasons argued in the previous point.
    \end{itemize}
    Hence, we conclude that the second term in \eqref{eq3.0.5} is bounded for any $p > 0$; this is,
    \begin{align}
      \Big|\sum_{K\in\mathcal{T}_m} \frac{|y_K^{n+1}|^{2p} - |y_K^{n}|^{2p}}{|y_K^{n+1}|^2 - |y_K^n|^2}(y_K^{n+1} + y_K^n) \overline{\varphi}_K m(K)\Big| & < \infty. \label{eq3.0.7}
    \end{align}
    Regarding the third term in \eqref{eq3.0.5}: thanks to \eqref{l2_num}, and the regularity properties of $a(x)$, we observe that
    \begin{equation}\label{eq3.0.8}
      \sum_{K\in\mathcal{K}} a(x_K)y_K^{n+\frac{1}{2}}\overline{\varphi}_K m(K) \leq \frac{C_2}{2} \Big(||y^{n+\frac{1}{2}}||_{L_{\mathcal{T}_m}^2(\Omega)}^2 + ||\varphi||_{L_{\mathcal{T}_m}^2(\Omega)}^2 \Big) < \infty
    \end{equation}
    where $C_2$ is a constant depending on $a(x)$. Combining \eqref{eq3.0.6}, \eqref{eq3.0.7} and \eqref{eq3.0.8}, we conclude that
    \begin{equation}
      \{\partial_t y_m\} \quad \text{ is bounded in } \quad L^\infty (0,T;\mathcal{X}').
    \end{equation}
    Therefore, due to the fact that
    \begin{equation*}
      H_0^1(\Omega)\stackrel{c}{\hookrightarrow}\,L^2(\Omega)\hookrightarrow\,H^{-2}(\Omega),
    \end{equation*}
    and thanks to the Aubin-Lions Theorem, we can extract a subsequence, still denoted by $\{y_m\}_{m\in \mathbb{N}}$, such that
    \begin{equation}\label{convergL2}
      y_m \rightarrow y \quad \text{ strongly in } \quad L^2 (0,T;L^2 (\Omega)).
    \end{equation}
    We will now prove that this $y$ is the weak solution given by Definition \ref{def1}. Let $\varphi \in C_0^\infty (0,T;\mathcal{X})$ such that $\nabla \varphi \cdot \mathbf{\hat{n}} = 0$ in $\partial \Omega \times [0,T]$. Multiplying the numerical scheme \eqref{esq_num} by $\frac{\Delta t}{2}\Big(\overline{\varphi}(x_K,n\Delta t) + \overline{\varphi}(x_K,(n+1)\Delta t)\Big) =: \frac{\Delta t}{2}\overline{\varphi}(x_K,t_{n+\frac{1}{2}})$, and summing over $K\in \mathcal{T}$ and over $n = 0,\:\dots,\: N$ with $T = N\Delta t$, we get:
  \small
  \begin{align}
    &i\sum_{n=0}^N \sum_{K\in \mathcal{T}} m(K) (y_K^{n+1}- y_K^n)\overline{\varphi}(x_K,t_{n+\frac{1}{2}}) + \sum_{n=0}^N \sum_{K\in \mathcal{T}} \sum_{\mathcal{N}(K)}  \tau_{K|L} (y_L^{n+\frac{1}{2}} - y_K^{n+\frac{1}{2}})\overline{\varphi}(x_K,t_{n+\frac{1}{2}}) \Delta t \nonumber \\
    & \quad - \sum_{n=0}^N \sum_{K\in \mathcal{T}} |y_k^{n+\frac{1}{2}}|^py_K^{n+\frac{1}{2}}\overline{\varphi}(x_K,t_{n+\frac{1}{2}})\Delta t + i\sum_{n=0}^N \sum_{K\in \mathcal{T}}  a(x_K)y_K^{n+\frac{1}{2}}\overline{\varphi}(x_K,t_{n+\frac{1}{2}}) \Delta t = 0. \label{eq3.1}
  \end{align}
  \normalsize
  We can re-write the first term in \eqref{eq3.1}, after using summation by parts and recalling that $\varphi \in C_0^\infty (0,T;\mathcal{X})$:
  \small
  \begin{equation}
    i\sum_{n=0}^N \sum_{K\in \mathcal{T}} m(K) (y_K^{n+1}- y_K^n)\overline{\varphi}(x_K,t_{n+\frac{1}{2}}) = -i\sum_{n=0}^N \sum_{K\in \mathcal{T}} m(K) y_K^{n}\Big(\frac{\overline{\varphi}(x_K,t_{n+1})- \overline{\varphi}(x_K,t_{n-1})}{2} \Big). \nonumber
  \end{equation}
  \normalsize
  Hence, because $\{y_m\}_{m\in\mathcal{N}}$ is bounded in $L^\infty((0,T)\times,L^2( \Omega))$, then as $m\rightarrow \infty$,
  \begin{equation}
    -i\sum_{n=0}^N \sum_{K\in \mathcal{T}} m(K) y_K^{n}\Big(\frac{\overline{\varphi}(x_K,t_{n+1})- \overline{\varphi}(x_K,t_{n-1})}{2} \Big) \rightarrow -i\int_0^T \int_\Omega y(x,t) \overline{\varphi}_t(x,t) dxdt. \label{eq3.2}
  \end{equation}
  The second term in \eqref{eq3.1} can also be re-written as follows:
  \begin{align}
    \sum_{n=0}^N &\sum_{K\in \mathcal{T}} \sum_{L\in \mathcal{N}(K)} \tau_{K|L} (y_L^{n+\frac{1}{2}} - y_K^{n+\frac{1}{2}})\overline{\varphi}(x_K,t_{n+\frac{1}{2}}) \Delta t = \nonumber \\
    &\sum_{n=0}^N \sum_{K|L\in \mathcal{E}_{int}} m(K|L) (y_L^{n+\frac{1}{2}} - y_K^{n+\frac{1}{2}})\frac{\overline{\varphi}(x_K,t_{n+\frac{1}{2}}) - \overline{\varphi}(x_L,t_{n+\frac{1}{2}})}{d_{K|L}} \Delta t. \label{eq3.3}
  \end{align}
  On the other hand,
  \scriptsize
  \begin{align}
    \sum_{n=0}^N \int_{n\Delta t}^{(n+1)\Delta t} \int_\Omega y_{\mathcal{T},\Delta t}(x,t) \Delta \overline{\varphi}(x,n\Delta t) dx dt &= \sum_{n=0}^n \sum_{K\in \mathcal{T}} y_K^{n+\frac{1}{2}}\int_K \Delta \overline{\varphi}(x,t_{n+\frac{1}{2}})dx \Delta t \label{eq3.4.0}  \\
    &= \sum_{n=0}^N \sum_{K|L\in \mathcal{E}_{int}}\Big(y_K^{n+\frac{1}{2}} - y_L^{n+\frac{1}{2}} \Big)\int_{K|L}\nabla \overline{\varphi}(x,t_{n+\frac{1}{2}})\cdot \mathbf{n}_{K,L}d\gamma. \label{eq3.4}
  \end{align}
  \normalsize
  By the same reasons argued in \eqref{eq3.2}, we have that
  \begin{equation}
    \sum_{n=0}^N \int_{n\Delta t}^{(n+1)\Delta t} \int_\Omega y_{\mathcal{T},\Delta t}(x,t) \Delta \overline{\varphi}(x,n\Delta t) dx dt \rightarrow \int_0^T \int_\Omega y(x,t) \Delta \overline{\varphi}(x,t) dx dt \label{eq3.5}
  \end{equation}
  as $m\rightarrow \infty$. Now, subtracting the right hand side of \eqref{eq3.3} from \eqref{eq3.4},
  \scriptsize
  \begin{align}
    \sum_{n=0}^N \sum_{K|L\in \mathcal{E}_{int}}m(K|L)\Big(y_K^{n+\frac{1}{2}} - y_L^{n+\frac{1}{2}} \Big)\Bigg(\int_{K|L}\nabla \overline{\varphi}(x,t_{n+\frac{1}{2}})\cdot \mathbf{n}_{K,L}d\gamma - \frac{\overline{\varphi}(x_K,t_{n+\frac{1}{2}}) - \overline{\varphi}(x_L,t_{n+\frac{1}{2}})}{d_{K|L}}\Bigg) \Delta t. \label{eq3.6}
  \end{align}
  \normalsize
  Because of the regularity properties of $\varphi$, we have that \eqref{eq3.6} goes to $0$ when $m\rightarrow \infty$. Hence, and thanks to \eqref{eq3.4.0} and \eqref{eq3.5},
  \small
  \begin{equation*}
    \sum_{n=0}^N \sum_{K|L\in \mathcal{E}_{int}} m(K|L) (y_L^{n+\frac{1}{2}} - y_K^{n+\frac{1}{2}})\frac{\overline{\varphi}(x_K,t_{n+\frac{1}{2}}) - \overline{\varphi}(x_L,t_{n+\frac{1}{2}})}{d_{K|L}} \Delta t \rightarrow \int_0^T \int_\Omega y(x,t) \Delta \overline{\varphi}(x,t) dx dt.
  \end{equation*}
  \normalsize
  The third and fourth terms in \eqref{eq3.1} can be treated in a similar way because $y_m \in L^\infty (0,T;\mathcal{X})$; hence, and due to \eqref{convergL2}, we have
  \small
  \begin{equation*}
    \sum_{n=0}^N \sum_{K\in \mathcal{T}} |y_K^{n+\frac{1}{2}}|^py_K^{n+\frac{1}{2}}\overline{\varphi}(x_K,t_{n+\frac{1}{2}})\Delta t \rightarrow \int_0^T \int_\Omega  |y(x,t)|^py(x,t)\overline{\varphi}(x,t)dxdt,\: \text{ as } m\rightarrow \infty.
  \end{equation*}
  \normalsize
  Finally,
  \small
  \begin{equation*}
    i\sum_{n=0}^N \sum_{K\in \mathcal{T}}  a(x_K)y_K^{n+\frac{1}{2}}\overline{\varphi}(x_K,t_{n+\frac{1}{2}}) \Delta t \rightarrow i\int_0^T \int_\Omega a(x)y(x,t)\overline{\varphi}(x,t)dxdt,\: \text{ as }m\rightarrow \infty.
  \end{equation*}
  Thus, when passing to the limit in \eqref{eq3.1} and integrating by parts, we conclude that $y$ is the weak solution of \eqref{problema}; concluding the proof.
\end{proof}

\subsection{Example I}
\begin{figure}[htb!]
  \centering
  \begin{subfigure}[t]{0.45\textwidth}
    \includegraphics[width=\textwidth]{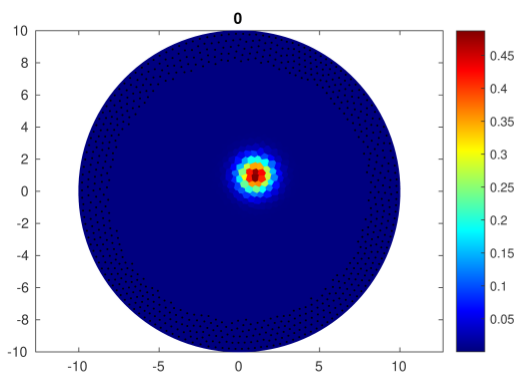}
  \end{subfigure}
  \begin{subfigure}[t]{0.45\textwidth}
    \includegraphics[width=\textwidth]{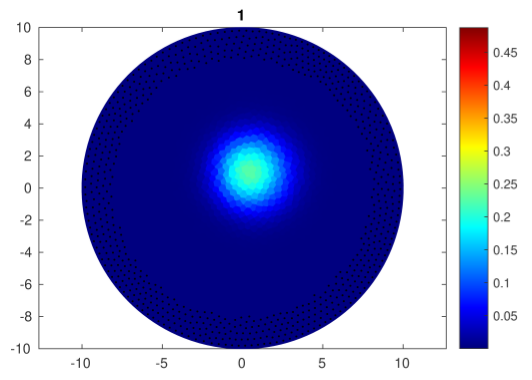}
  \end{subfigure} \\
    \begin{subfigure}[t]{0.45\textwidth}
      \includegraphics[width=\textwidth]{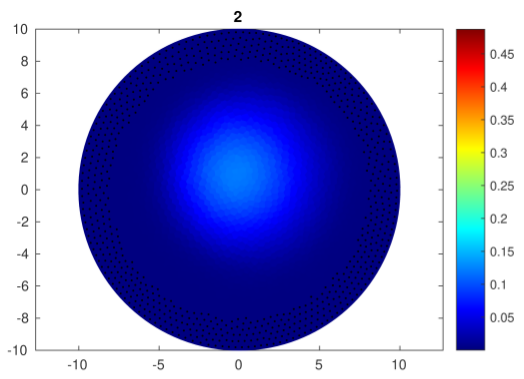}
    \end{subfigure}
  \begin{subfigure}[t]{0.45\textwidth}
    \includegraphics[width=\textwidth]{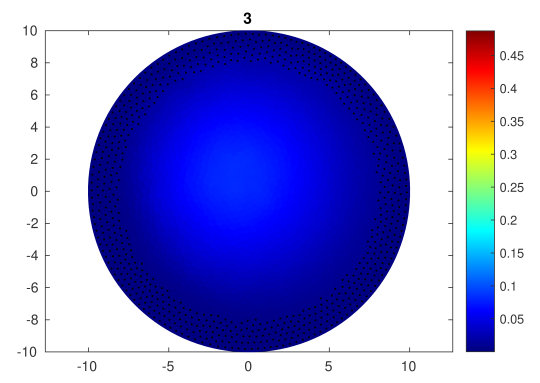}
  \end{subfigure}
    \caption{Numerical solution at different timesteps. Cells with black dots indicate the zone where the damping function is in place.}
    \label{fig1}
\end{figure}
In the following example, we will use the given numerical scheme to solve equation \eqref{problema} for $p = 2$, $T = 500$, $\Omega$ being disk with ratio $r = 10$, $\omega \subset \Omega: x^2+y^2 > 8^2$, and a damping function defined as follows:
\begin{equation*}
  a(x,y) =
  \begin{cases}
    \big(\sqrt{(x^2 + y^2)} - 8\big)^2, \quad 8^2 \leq x^2 + y^2 \leq 10^2 \\
    0, \qquad \qquad \qquad \text{ otherwise.}
  \end{cases}
\end{equation*}
Observe that the damping fulfills condition \eqref{anabla}. The initial condition is given by
\begin{equation}
  y_0 = \frac{1}{2}\exp\Big(-\big((x-1)^2 + (y-1)^2 + \frac{i}{2}(x-1)\big)\Big) \label{inCond}.
\end{equation}
In our computations, we've used $\Delta t = \frac{1}{2^6} = 0.015625$ and $h = 0.64851$, where 2000 polygons were used to approximate the domain. Figure \ref{fig1} illustrates the state of the numerical solution at different times, while Figure \ref{fig2} left shows the evolution of the energy with time. In this case the decay is exponential, as expected from Theorem \ref{theorem 2.1}.

\begin{figure}[tb]
  \centering
    \begin{subfigure}[t]{0.4325\textwidth}
    \includegraphics[width=\textwidth]{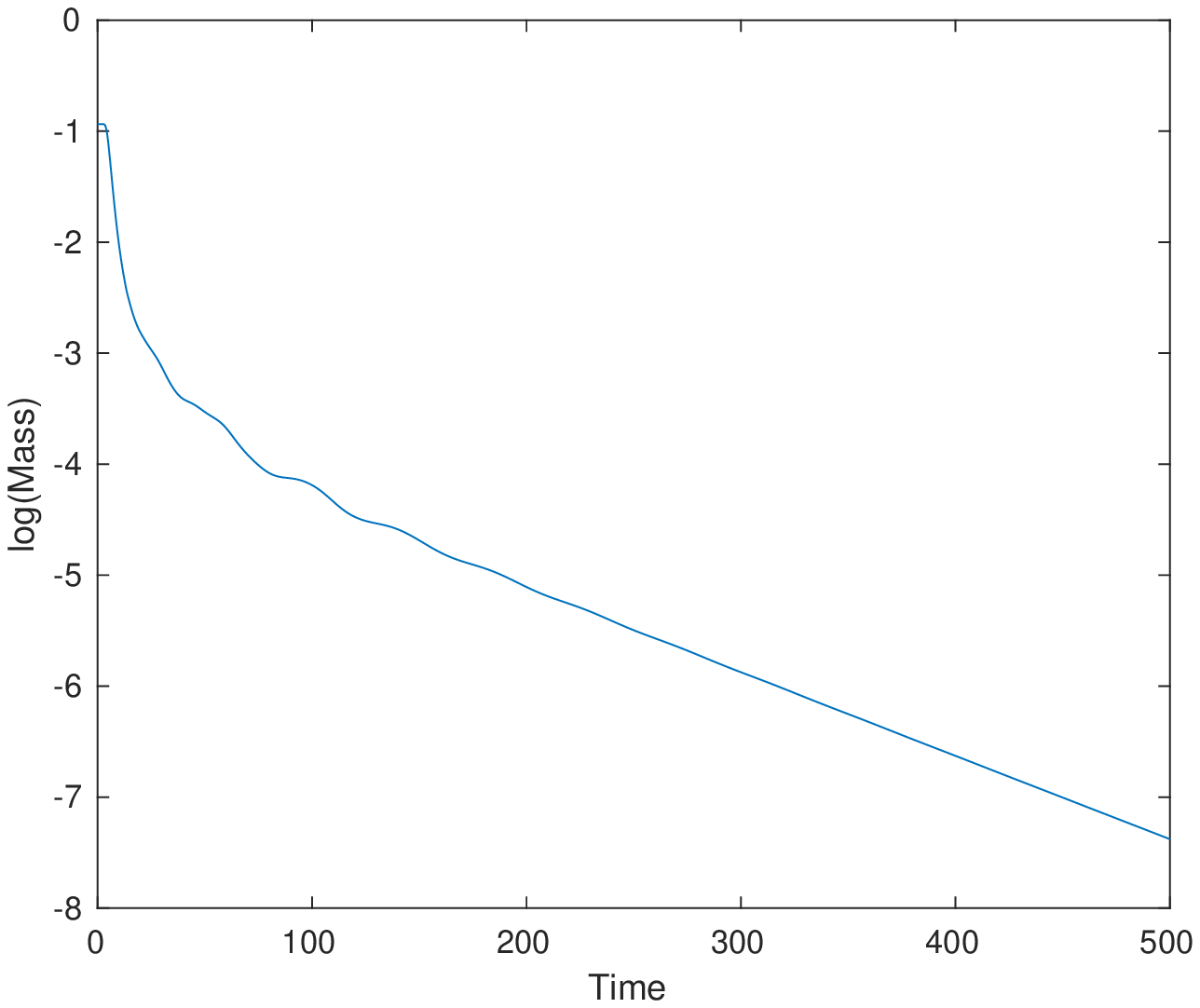}
    \end{subfigure}
      \begin{subfigure}[t]{0.45\textwidth}
    \includegraphics[width=\textwidth]{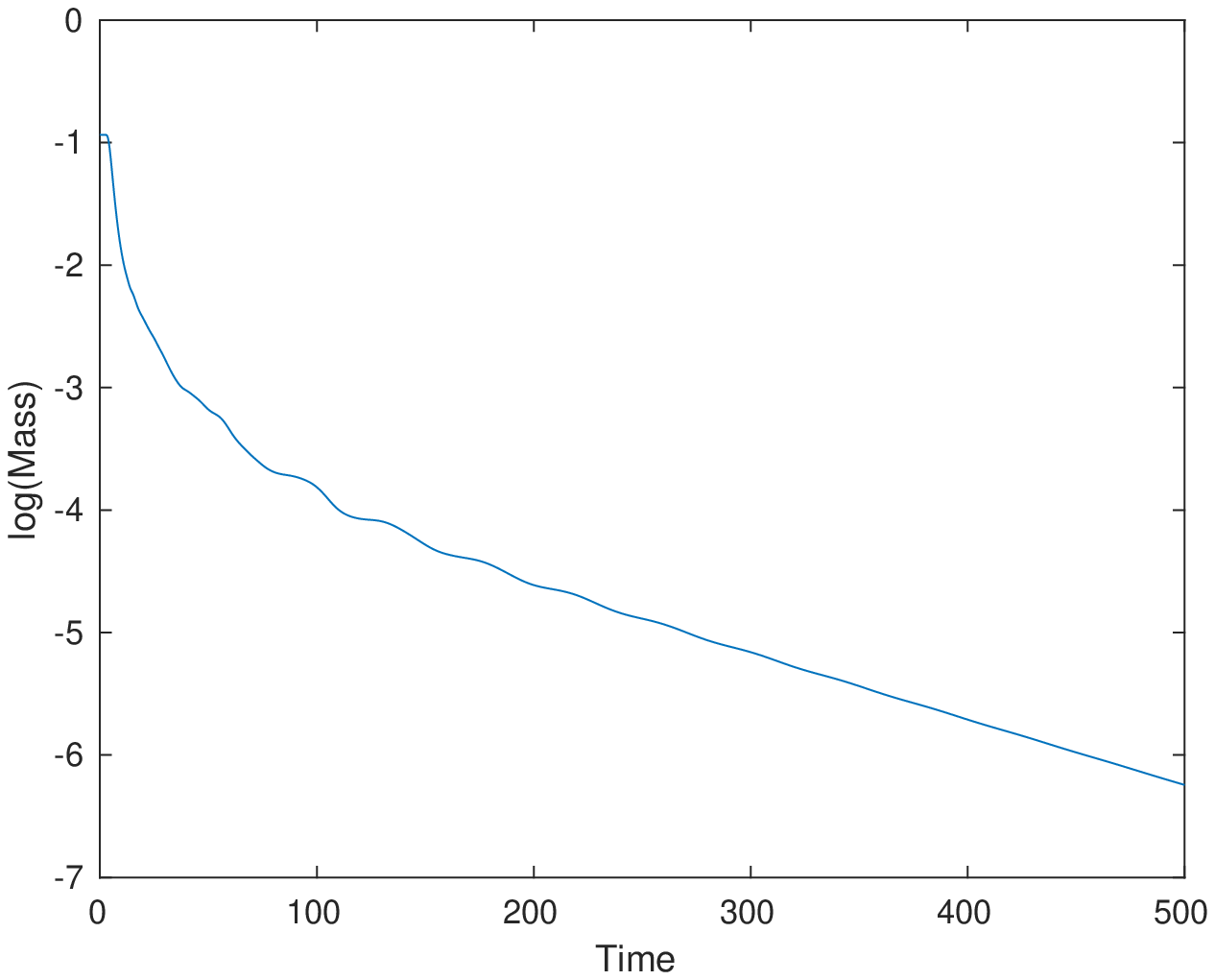}
      \end{subfigure}
      \caption{Energy decays for both examples. Left: decay for Example I. Right: decay for Example II.}
      \label{fig2}
\end{figure}

\subsection{Example II} As a second experiment, we will repeat Example I but using $p=2$, $T=500$, and the damping function
\begin{equation*}
  a(x,y) =
  \begin{cases}
    \big(\exp(\sqrt{x^2+y^2} - 8) - 1 \big)^2, \quad 8^2 \leq x^2 + y^2 \leq 10^2 \\
    0, \qquad \qquad \qquad \qquad \qquad \qquad \text{ in other case.}
  \end{cases}
\end{equation*}
This function also fulfills condition \eqref{anabla}. Figure \ref{fig2} right shows the time evolution of the energy. The decay in this case is also exponential, replicating the theoretical result \eqref{theorem 2.1} proved in the previous sections.

\subsection{Example III} We will now consider an exterior domain, as stated in Section \ref{section3}. The new domain $\Omega$ will be defined as:
\begin{equation*}
  \Omega = \{(x,y) \in \mathbb{R}^2: 5 \leq \sqrt{x^2 + y^2} \leq 20\},
\end{equation*}
while the effective damping subset will be given by
\begin{equation*}
  \omega = \{(x,y) \in \mathbb{R}^2: \sqrt{x^2+y^2} \geq 17\}.
\end{equation*}
The initial condition to be used is
\begin{equation*}
  y(x,0) = \exp\Big(-\big(x^2 + (y-10)^2 + \frac{i}{2}x \big)\Big).
\end{equation*}
\begin{figure}[h]
  \centering
    \begin{subfigure}[t]{0.425\textwidth}
    \includegraphics[scale=0.6,bb=0 0 30 30]{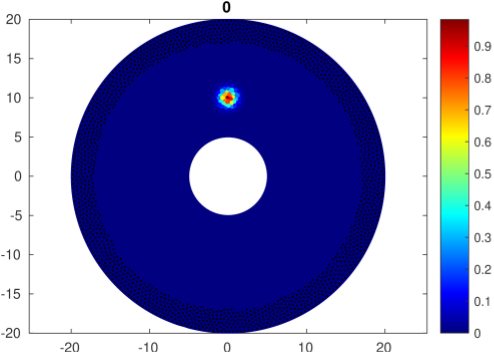}
    \end{subfigure}
      \begin{subfigure}[t]{0.425\textwidth}
    \includegraphics[width=\textwidth]{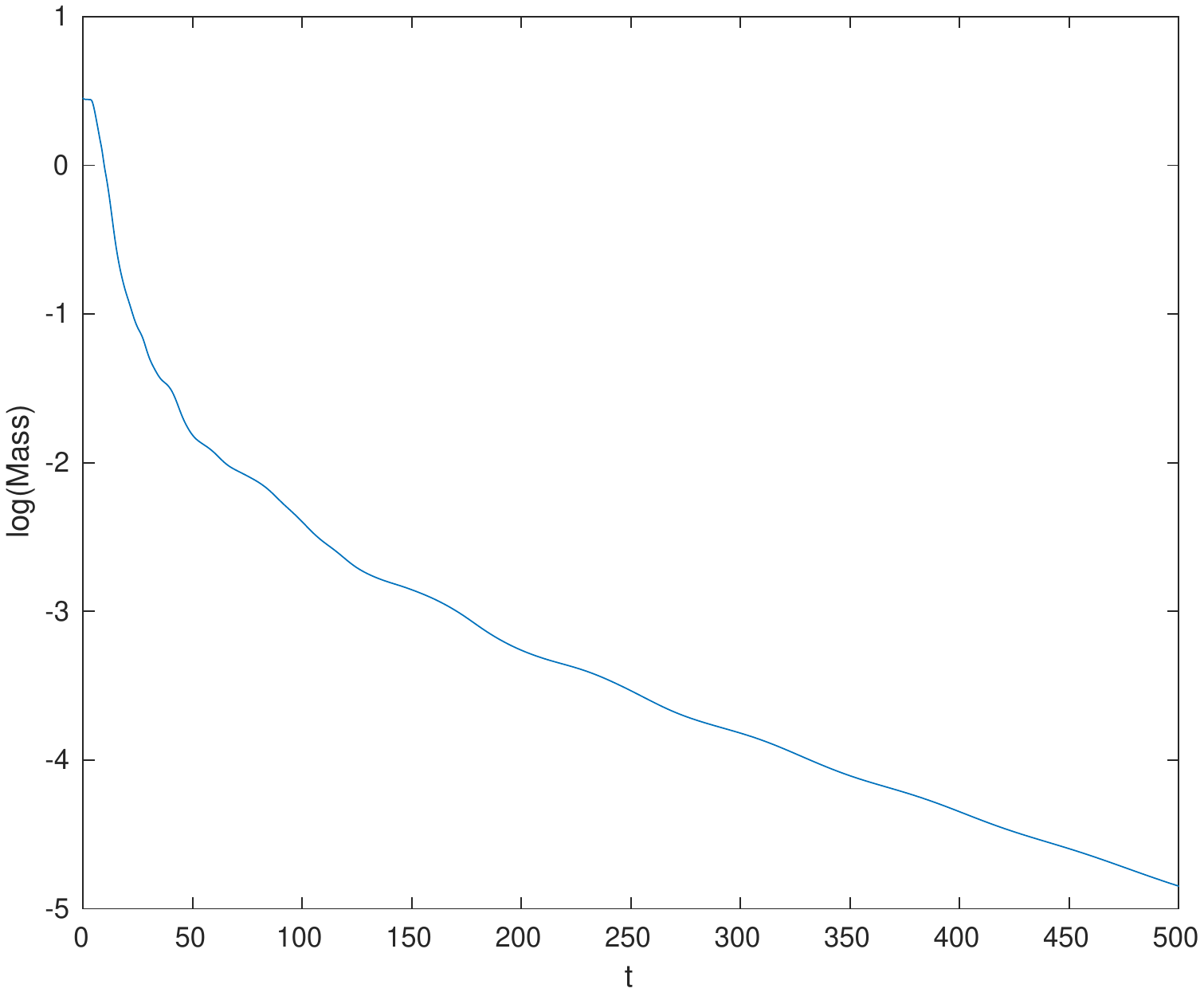}
      \end{subfigure}
      \caption{Results for the experiment with an exterior domain. Left: the initial condition. Right: semi-log plot for the time-evolution of the mass function.}
      \label{fig3}
\end{figure}
For these calculations, we've done $\Delta t = \frac{1}{2^6} = 0.0156$, and the domain was approximated using $5000$ polygons with $h = 0.76172$. Figure \ref{fig3} illustrates the initial condition and the time evolution of the mass functional. Its decay follows an exponential trend, as expected.

\subsection{Example IV} As a final experiment, we will repeat the previous case but using the following domain
\begin{equation*}
  \Omega = \{(x,y) \in \mathbb{R}^2: 7 \leq \sqrt{x^2 + y^2} \leq 20\}.
\end{equation*}
The effective damping subset will be given by $\omega = \{(x,y) \in \mathbb{R}^2: \sqrt{x^2 + y^2} \geq 17 \land \alpha \in (-\pi,0)\}$, where $\alpha$ is the angle of the point $(x,y)$ with respect to the positive $x$ axis. This is equivalent to the geometric condition \eqref{gamma0} for a point $x^0 = (0,y)$ such that  $y \rightarrow +\infty$.
\begin{figure}[h]
  \centering
    \begin{subfigure}[t]{0.5\textwidth}
    \includegraphics[width=\textwidth]{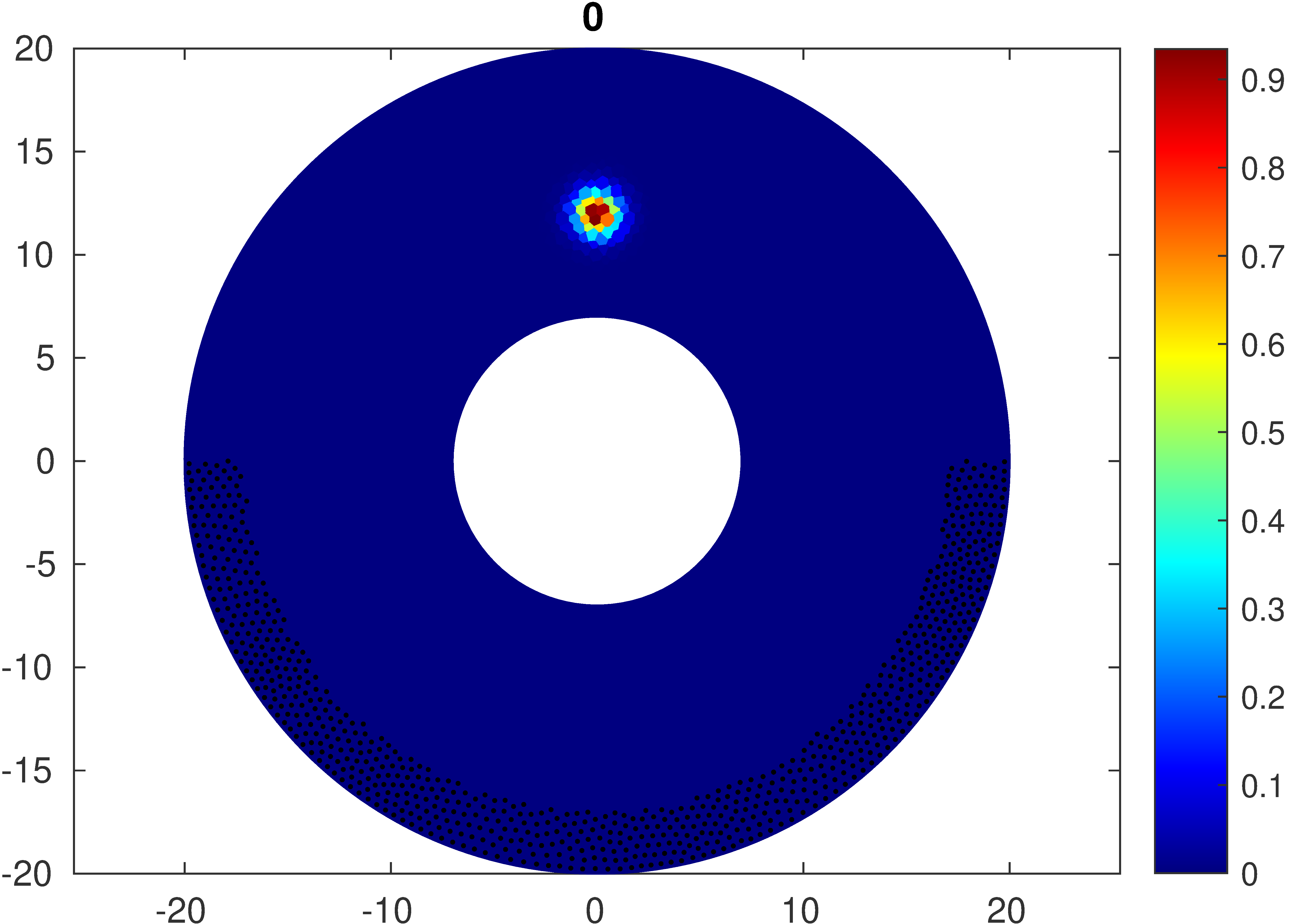}
    \end{subfigure}
      \begin{subfigure}[t]{0.425\textwidth}
    \includegraphics[width=\textwidth]{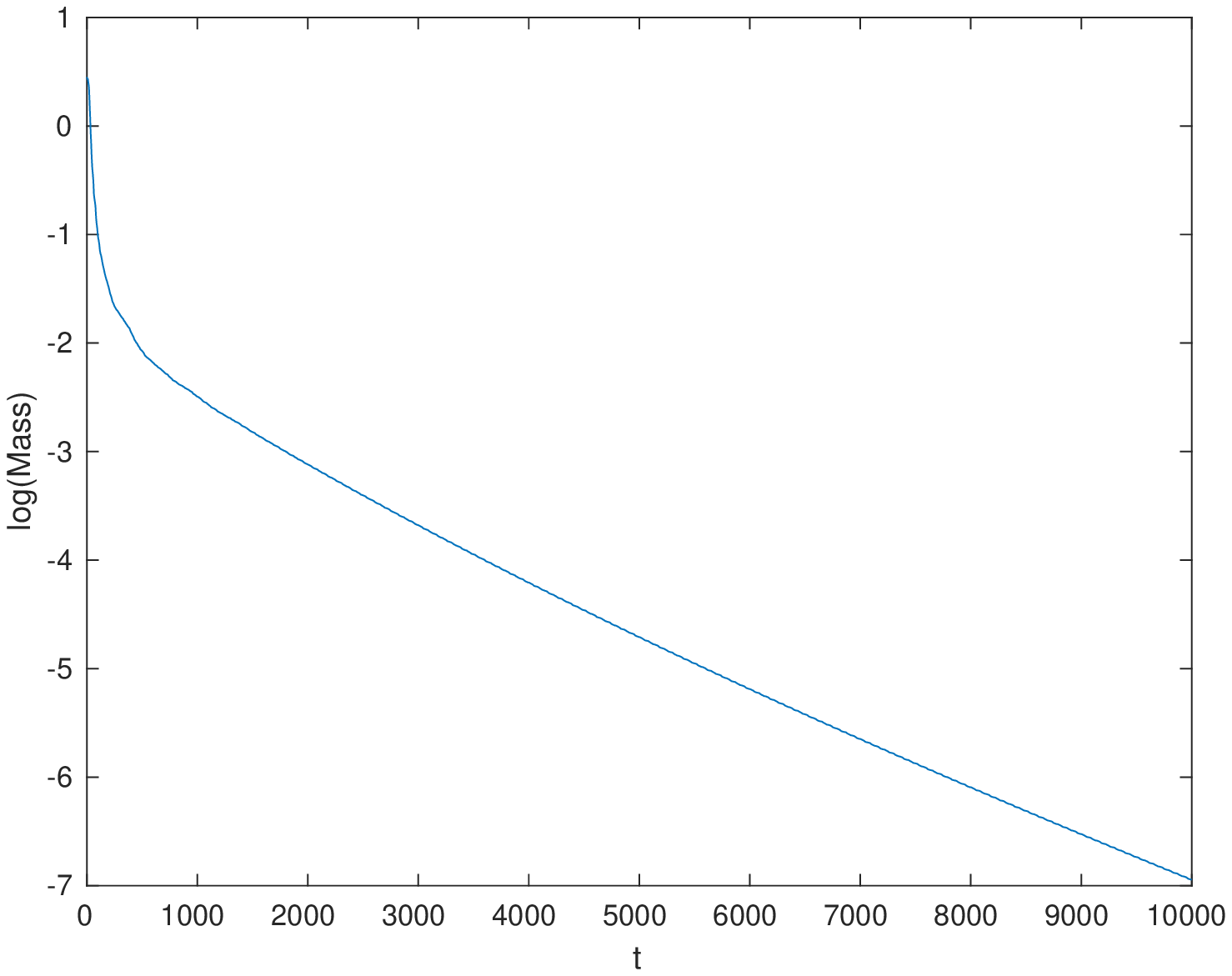}
      \end{subfigure}
      \caption{Left: the initial condition. Black dots denote the cells where the damping function is acting effectively. Right: time evolution of the mass functional, at semi-log scale.}
      \label{fig4}
\end{figure} \\

For our calculations, we've used $\Delta t = \frac{1}{2^5} = 0.0312$, $T = 10000$, and $h = 0.80958$ for a domain approximated using $5000$ polygons. The left panel of Figure \ref{fig4} shows the initial condition and the zone where the damping is acting effectively; while the right panel shows the decay of the Mass funcional in semi-log scale. We can clearly see the exponential decay rate, as expected from Section \ref{section3}.



\section{Final Conclusions}
	 	 \setcounter{equation}{0}
The following table summarizes the new contributions of the present paper compared with the works cited in the introduction.

\newpage
		\begin{center}
\hspace*{.9cm}\quad 		\begin{adjustbox}{max width=.9\textwidth}

		\begin{tabular}{@{\extracolsep{\fill}}|M{1.8cm}|M{1.5cm}|M{3.5cm}|M{2.5cm}|M{5.5cm}|}
			
			\hline 		\multicolumn{5}{|c|}{\small Summary of the literature with respect to problem $ iu_t+\Delta u-f(u)+\text{``damping''}=0 $} \\ \hline
			Authors	& $ f(u) $	&  Damping     & Setting & Tools/Comments
			\\ \hline
			Tsutsumi (1990) \cite{Tsutsumi3}& $ f(|u|^2)u $	&  $ i a\,u $
			
			$ a=\text{ constant} $  & Bounded domain in  $ \mathbb{R}^N $
			& \hspace*{-.5cm}  \begin{minipage} [t] {0.4\textwidth}
				\vspace{-\topsep}\begin{itemize}
					\setlength{\parskip}{0pt}
					\setlength{\itemsep}{0pt plus 1pt}

					\done  Restrictive values of $ p $ acting on
					{\small $ |f(s)|$ and $|D^\alpha f(s) s^{\alpha-1}|.$}
					%
					\done  Exponential decay in $ H_0^1, H^2, $ {\small$ H^{{2k}}, k>N/4 $ level.}
					\crossed smoothing effect
					\crossed unique continuation
				\end{itemize}
			\end{minipage}
			\\ \hline
			
			Lasiecka and Triggiani (2006) \cite{Lasiecka5}& $ 0 $	&  $ \displaystyle \frac{\partial u}{\partial \nu}= ig(u) $ on $ \Gamma_1$   & Bounded domain in  $ \mathbb{R}^N $ with $ \Gamma=\Gamma_0 \cup \Gamma_1 $
			& \hspace*{-.5cm}  \begin{minipage} [t] {0.4\textwidth}
				\vspace{-\topsep}\begin{itemize}
					\setlength{\parskip}{0pt}
					\setlength{\itemsep}{0pt plus 1pt}
					\done  uniform decay rates at the $ L^2 $ level
					\crossed smoothing effect
					\crossed unique continuation
				\end{itemize} \vspace{-\topsep}
			\end{minipage}
			\\ \hline
			Dehman et al. (2006) \cite{Dehman}& $ P^\prime(|u|^2)u $	&  {\small $-a(x)(1-\Delta)^{-1}a(x)u_t $}  & Compact $ 2 $-dimensional Riemannian manifold without boundary
			& \hspace*{-.5cm}  \begin{minipage} [t] {0.4\textwidth}
				\vspace{-\topsep}\begin{itemize}
					\setlength{\parskip}{0pt}
					\setlength{\itemsep}{0pt plus 1pt}
					\done  exponential decay at the  $ H^1 $ level
					\done microlocal analysis
					\done pseudo-differential dissipation
					\done Strichartz estimates \cite{burq2004strichartz}
					\done unique continuation (assumption)
					\crossed smoothing effect
				\end{itemize} \vspace{-\topsep}
			\end{minipage}
			\\ \hline
			Aloui and Kenissi (2008) \cite{aloui2007stabilizationexterior}& $ 0 $	&
			$ i a(x)u $
			
			&  $ N $-exterior domain  & \hspace*{-.5cm}  \begin{minipage} [t] {0.4\textwidth}
				\vspace{-\topsep}\begin{itemize}
					\setlength{\parskip}{0pt}
					\setlength{\itemsep}{0pt plus 1pt}
					\crossed smoothing effect
					\crossed unique continuation
					\done uniform local energy estimates at the $ L^2 $ level
				\end{itemize} \vspace{-\topsep}
			\end{minipage}
			\\ \hline
			Aloui (2008) \cite{aloui2008smoothing}, \cite{aloui2008smoothingbounded}& $ R_0 $	&
			\cite{aloui2008smoothing}: $ i a(x)(1-\Delta)^{\alpha}a(x)u; $	\cite{aloui2008smoothingbounded}: $ i a(x)(-\Delta)^{\alpha}a(x)u, $
			$ 0 < \alpha \le \frac12 $
			& \cite{aloui2008smoothing}: Compact $ N $-dimensional Riemannian manifold without boundary; \cite{aloui2008smoothingbounded}: bounded domain in $ \mathbb{R}^N $  & \hspace*{-.5cm}  \begin{minipage} [t] {0.4\textwidth}
				\vspace{-\topsep}\begin{itemize}
					\setlength{\parskip}{0pt}
					\setlength{\itemsep}{0pt plus 1pt}
					\done $ R_0 $ is a pseudo-differential dissipation		\done smoothing effect
					\crossed unique continuation
					\crossed stabilization
				\end{itemize} \vspace{-\topsep}
			\end{minipage}
			\\ \hline
			Cavalcanti et al. (2009) \cite{Cavalcanti}& $ |u|^2\,u $	&  $ i a(x)u $  &  $ \mathbb{R}^2 $
			& \hspace*{-.5cm}  \begin{minipage} [t] {0.4\textwidth}
				\vspace{-\topsep}\begin{itemize}
					\setlength{\parskip}{0pt}
					\setlength{\itemsep}{0pt plus 1pt}
					
					\done  exponential decay at the  $ L^2 $ level
					\done unique continuation (proved for undamped problem)
					\done smoothing effect in $ H^{1/2} $ norm
					
				\end{itemize} \vspace{-\topsep}
			\end{minipage}
		
						\\ \hline

						Laurent (2010) \cite{Laurent}& {\small$ (1+|u|^2)u $}	&  {\small $-a(x)(1-\Delta)^{-1}a(x)u_t $}  &  Some compact manifolds of dimension 3
						& \hspace*{-.5cm}  \begin{minipage} [t] {0.4\textwidth}
							\vspace{-\topsep}\begin{itemize}
								\setlength{\parskip}{0pt}
								\setlength{\itemsep}{0pt plus 1pt}
								
								\done  exponential decay at the  $ H^1 $ level for periodic solutions
								\done solutions lie in Bourgain spaces
								\done the propagation of compactness and regularity
								
								\done microlocal defect measure \cite{gerard1991microlocal}
								\done pseudo-differential dissipation
								\done unique continuation (assumption)
								\crossed smoothing effect
							\end{itemize} \vspace{-\topsep}
						\end{minipage}
						\\ \hline


		\end{tabular}

	\end{adjustbox}							
\end{center}

\begin{adjustbox}{max width=0.9\textwidth}		

		\begin{tabular}{@{\extracolsep{\fill}}|M{2.cm}|M{1.5cm}|M{3.5cm}|M{2.5cm}|M{5.5cm}|}

			\hline 		\multicolumn{5}{|c|}{\small Summary of the literature with respect to problem $ iu_t+\Delta u-f(u)+\text{``damping''}=0 $} \\ \hline
			Authors	& $ f(u) $	&  Damping     & Environment & Tools/Comments
			\\
			\hline

	Cavalcanti et al. (2010) \cite{Cavalcanti2}& $ \pm|u|^2\,u $	&  \begin{enumerate}[(i)]
		\item $ i a(x)u $
		\item $ i b(x)|u|^2u $
	\end{enumerate}  &  $ \mathbb{R} $
	& \hspace*{-.5cm}  \begin{minipage} [t] {0.4\textwidth}
		\vspace{-\topsep}\begin{itemize}
			\setlength{\parskip}{0pt}
			\setlength{\itemsep}{0pt plus 1pt}
			
			\done  exponential decay at the  $ L^2 $ level (i) and polynomial decay (ii)
			\done unique continuation \cite{zhang1997unique}
			\done smoothing effect in $ H^{1/2} $ norm
			
		\end{itemize} \vspace{-\topsep}
	\end{minipage}
	\\ \hline
						Rosier and Zhang (2010) \cite{Rosier1}& $\lambda u^{\alpha_1}\bar{u}^{\alpha_2},  $
						{\tiny  $ \alpha_1+\alpha_2\ge 2 $}	&
						$ i a^2(x)u $
						&  $ \mathbb{T}^N $
						& \hspace*{-.5cm}  \begin{minipage} [t] {0.4\textwidth}
							\vspace{-\topsep}\begin{itemize}
								\setlength{\parskip}{0pt}
								\setlength{\itemsep}{0pt plus 1pt}
								
								\done   internal stabilization
								\done unique continuation \cite{zhang1997unique}
								\done smoothing effect \cite{bourgain1999global}
								
							\end{itemize} \vspace{-\topsep}
						\end{minipage}
			
						\\ \hline
			Özsar\i\, et al. (2011) \cite{Ozsar}& $ |u|^pu $	&  $ i a\,u $
						
						$a= $ constant
						&  Bounded domain in $ \mathbb{R}^N $ subject to inhomogeneous Dirichlet/ Neumann
						
						& \hspace*{-.5cm}  \begin{minipage} [t] {0.4\textwidth}
							\vspace{-\topsep}\begin{itemize}
								\setlength{\parskip}{0pt}
								\setlength{\itemsep}{0pt plus 1pt}
								
								\done   exponential stabilization at the $ H^1, H^2$ (smallness on the initial data) level
								\done  monotone operator theory
								\crossed unique continuation
								\crossed smoothing effect
								
							\end{itemize} \vspace{-\topsep}
						\end{minipage} \\ \hline
			Özsar\i\, et al. (2012) \cite{Ozsari2}& $ |u|^pu $	&  $ i a\,u $
			
			$a= $ constant
			&  Bounded domain in $ \mathbb{R}^N $ with Dirichlet control
			
			& \hspace*{-.5cm}  \begin{minipage} [t] {0.4\textwidth}
				\vspace{-\topsep}\begin{itemize}
					\setlength{\parskip}{0pt}
					\setlength{\itemsep}{0pt plus 1pt}
					
					\done   exponential stabilization at the $ H^1 $ level
					\done $ 0 < p < \frac{4}{N+2} $
					\done maximal monotone operator theory
					\crossed unique continuation
					\crossed smoothing effect
					
				\end{itemize} \vspace{-\topsep}
			\end{minipage}
			\\ \hline
			Bortot et al. (2013) \cite{Bortot}& 0	&  $ i a(x)g(u) $  & Compact $ N $-dimensional Riemannian manifold with smooth boundary
			& \hspace*{-.5cm}  \begin{minipage} [t] {0.4\textwidth}
				\vspace{-\topsep}\begin{itemize}
					\setlength{\parskip}{0pt}
					\setlength{\itemsep}{0pt plus 1pt}

					\done  observability inequality for the linear problem
					\crossed smoothing effect
					\crossed unique continuation
				\end{itemize} \vspace{-\topsep}
			\end{minipage}
			
			\\ \hline 	
			Aloui (2013) \cite{Aloui2}& $ 0 $	&
			$ i a(x)(-\Delta)^{1/2}a(x)u $
			
			& $ \Omega_0=\mathcal{O}\cap B, $ where   $ B $ is a bounded domain in $ \mathbb{R}^N $
			and $ \mathcal{O} $ is the union of a finite number of bounded strictly convex bodies
			&\hspace*{-.5cm}
			\begin{minipage} [t] {0.4\textwidth}
				\vspace{-\topsep}\begin{itemize}
					\setlength{\parskip}{0pt}
					\setlength{\itemsep}{0pt plus 1pt}
					\done smoothing effect
					\crossed unique continuation
					\done  exponential decay at the  $ L^2 $ level on $ \Omega_0 $
				\end{itemize} \vspace{-\topsep}
			\end{minipage}
			\\ \hline
Özsar\i\, (2013) \cite{Ozsari3}& $ \alpha|u|^pu $	&  $ i\beta|u|^qu, \beta>0 $

& Bounded domain in  $ \mathbb{R}^N $ with $ \Gamma=\Gamma_0 \cup \Gamma_1 $
&\hspace*{-.5cm}
\begin{minipage} [t] {0.4\textwidth}
	\vspace{-\topsep}\begin{itemize}
		\setlength{\parskip}{0pt}
		\setlength{\itemsep}{0pt plus 1pt}
		\crossed smoothing effect
		\crossed unique continuation
		\done  exponential decay at the  $ L^2 $ level when $ q=0 $ and \break $ 0 < p < \frac{4}{N+2} $
	\end{itemize} \vspace{-\topsep}
\end{minipage}
\\ \hline
			
			Bortot and Cavalcanti (2014) \cite{Bortot1}& 0	&  $ i a(x)g(u) $  &  Exterior domain and non compact $ n $-dimensional Riemannian manifold & \hspace*{-.5cm}  \begin{minipage} [t] {0.4\textwidth}
				\vspace{-\topsep}\begin{itemize}
					\setlength{\parskip}{0pt}
					\setlength{\itemsep}{0pt plus 1pt}
					\done  exponential decay at the  $ L^2 $ level
					\done smoothing effect \cite{burq2004nonlinear}
					\done unique continuation \cite{Triggiani_Xu}
				\end{itemize} \vspace{-\topsep}
			\end{minipage}
			\\ \hline

		\end{tabular}
		
		%
	\end{adjustbox}
	
	\begin{center}
		\begin{adjustbox}{max width=0.9\textwidth}		

			\begin{tabular}{@{\extracolsep{\fill}}|M{2.cm}|M{1.5cm}|M{3.5cm}|M{2.5cm}|M{5.5cm}|}

				\hline 		\multicolumn{5}{|c|}{\small Summary of the literature with respect to problem $ iu_t+\Delta u-f(u)+\text{``damping''}=0 $} \\ \hline
				Authors	& $ f(u) $	&  Damping     & Environment & Tools/Comments
				\\ \hline	
				Natali (2015) \cite{Natali}& $ |u|^{\alpha-1}u $	&  $ i b(x)g(u) $  &   $ \mathbb{R}$ & \hspace*{-.5cm}  \begin{minipage} [t] {0.4\textwidth}
					\vspace{-\topsep}\begin{itemize}
						\setlength{\parskip}{0pt}
						\setlength{\itemsep}{0pt plus 1pt}
						\done smoothing effect in $ H^{1/2} $ norm
						\done unique continuation
						\done  exponential decay at the  $ L^2 $ level for $ \alpha=3,5 $
					\end{itemize} \vspace{-\topsep}
				\end{minipage}

				\\
				\hline

				Natali (2016) \cite{Natali1}& $ |u|^{2}u $	&  $ i a(x)g(u) $  &   $ \mathbb{R}^2$ & \hspace*{-.5cm}  \begin{minipage} [t] {0.4\textwidth}
					\vspace{-\topsep}\begin{itemize}
						\setlength{\parskip}{0pt}
						\setlength{\itemsep}{0pt plus 1pt}
						\done smoothing effect in $ H^{1/2} $ norm
						\done unique continuation \cite{Cavalcanti}
						\done  exponential decay at the  $ L^2 $ level
					\end{itemize} \vspace{-\topsep}
				\end{minipage}
				
				\\ \hline	
				Kalantarov and Özsar\i\, (2016) \cite{Kalantarov}& $ |u|^{p}u $	&  $ i a\,u $
				
				$ a=$ constant  &   $ (0,\infty)$ & \hspace*{-.5cm}  \begin{minipage} [t] {0.4\textwidth}
					\vspace{-\topsep}\begin{itemize}
						\setlength{\parskip}{0pt}
						\setlength{\itemsep}{0pt plus 1pt}
						\crossed smoothing effect
						\crossed unique continuation
						\done  decay rates are determined according to the relation between the powers of the nonlinearities.
					\end{itemize} \vspace{-\topsep}
				\end{minipage}
				
				\\ \hline
				
				Cavalcanti et al. (2017) \cite{Cavalcanti0}& $ |u|^{p}u $	&  $ i\lambda(x,t)u $  &   $ \mathbb{R}^N$ & \hspace*{-.5cm}  \begin{minipage} [t] {0.4\textwidth}
					\vspace{-\topsep}\begin{itemize}
						\setlength{\parskip}{0pt}
						\setlength{\itemsep}{0pt plus 1pt}
						\crossed smoothing effect
						\crossed unique continuation
						\done  exponential decay at the  $ L^2 $ level and $ H^1\cap L^{p+2} $ level
					\end{itemize} \vspace{-\topsep}
				\end{minipage}
				\\ \hline
				
{Burq and Zworski (2017)} \cite{burq2017rough}& $ 0 $	&  $ ia(x)u $  &   2-Tori & \hspace*{-.5cm}  \begin{minipage} [t] {0.4\textwidth}
					\vspace{-\topsep}\begin{itemize}
						\setlength{\parskip}{0pt}
						\setlength{\itemsep}{0pt plus 1pt}
						\crossed smoothing effect
						\crossed unique continuation
						\done  exponential decay at the  $ L^2 $ level
					\end{itemize} \vspace{-\topsep}
				\end{minipage}				
				\\ \hline	
				Bortot and Corrêa (2018) \cite{Bortot2017}& $ f(|u|^2)u $	&
				$ i a(x)(-\Delta)^{1/2}a(x)u $&
				
				Bounded domain in $ \mathbb{R}^N $
				
				&\hspace*{-.5cm}
				\begin{minipage} [t] {0.4\textwidth}
					\vspace{-\topsep}\begin{itemize}
						\setlength{\parskip}{0pt}
						\setlength{\itemsep}{0pt plus 1pt}
						\done $ |f(s)|, f^\prime(s)s \leq M $ and $ f(s)s\ge 0 $
						\done smoothing effect \cite{aloui2008smoothingbounded}
						\done unique continuation (proved for undamped problem)
						\done  exponential decay at the  $ L^2 $ level
						
					\end{itemize} \vspace{-\topsep}
				\end{minipage}
				\\ \hline
				
				Cavalcanti et al. (2018) \cite{Cavalcanti4}&
				
				(i) $ |u|^2\,u $
				
				\smallskip
				
				(ii)
				\medskip
				
				$ f(|u|^2)u $
				& (i) $ i\,a(x)u $
				
				\medskip

				(ii)
				
				\medskip
				
				$ i a(x)(-\Delta)^{1/2}a(x)u $ \medskip &
				
				Compact $ N $-dimensional Riemannian manifold without boundary ($ N=2 $ only in the case (i))
				
				&\hspace*{-.5cm}
				\begin{minipage} [t] {0.4\textwidth}
					\vspace{-\topsep}\begin{itemize}
						\setlength{\parskip}{0pt}
						\setlength{\itemsep}{0pt plus 1pt}
						\done $ |f(s)|, f^\prime(s)s \leq M $ and $ f(s)s\ge 0 $
						\done smoothing effect \cite{aloui2008smoothing}
						\done unique continuation
						\done  exponential decay at the  $ L^2 $ level (case (ii))
						\done energy functional goes to zero as time goes to infinity (case (i))
						
					\end{itemize} \vspace{-\topsep}
				\end{minipage}
				\\ \hline
				{\bf Present article}&
				
				$ |u|^p\,u $

				&  $ i\,a(x)u $
				
				&
				
				(i) Bounded domain in $ \mathbb{R}^N $
				\medskip

				(ii) $ \mathbb{R}^N $

\medskip

		(iii) Exterior domain
				
				&\hspace*{-.5cm}
				\begin{minipage} [t] {0.4\textwidth}
					\vspace{-\topsep}\begin{itemize}
						\setlength{\parskip}{0pt}
						\setlength{\itemsep}{0pt plus 1pt}
						\done No restriction for $ p $ and $ N $
						\crossed smoothing effect
						\crossed Strichartz estimates
						\crossed Microlocal analysis
					\done monotone theory
						\done unique continuation \cite{las03}
						\done  exponential decay at the  $ L^2 $
						
					\end{itemize} \vspace{-\topsep}
				\end{minipage}
				\\ \hline
			\end{tabular}
			
			%
		\end{adjustbox}
	\end{center}
	
	\medskip
	
		\subsection{ Strong stability versus uniform energy decay rates} Making use of the assumption \eqref{anabla}, we obtain exponential decay at the $ L^2 $ level.  When (\ref{anabla}) is no longer valid, the constant on the right hand side of (\ref{ident_9}) will depend on $ C=C\left(T,\|y_0\|_{\mathcal{X}}\right)$. In this case, instead of exponential decay rate estimates one just has that the energy $E_0(t)$ goes to zero when $t$ goes to infinity (as in Cavalcanti et al. \cite{{Cavalcanti4}}). Indeed, fix $T_0^\ast > T_0$, where $T_0>0$ considered large enough comes from the unique continuation property. Then, from \eqref{final_inequality}  there exists a constant $C=C(L,T_0^\ast)$ such that
\begin{eqnarray}\label{eqt1}
E_0(0) \leq C(L,T_0^\ast) \int_0^{T_0^\ast}\int_\Omega a(x)\,|y|^2\,dxdt.
\end{eqnarray}

The identity of the energy yields
\begin{eqnarray}\label{eqt2}
\int_0^{T_0^\ast}\int_\Omega a(x)\,|y|^2\,dxdt=-E_0(T_0^\ast) + E_0(0).
\end{eqnarray}

Combining (\ref{eqt1}) and (\ref{eqt2}) and since $E_0(T_0^\ast) \leq E_0(0)$, we infer
\begin{eqnarray*}
	E_0(T_0^\ast) \left(1+C(L,T_0^\ast) \right)\leq C(L,T_0^\ast) E_0(0),
\end{eqnarray*}
from which we conclude that
\begin{eqnarray*}
	E_0(T_0^\ast) \leq \left(\frac{C(L,T_0^\ast)}{1+ C(L,T_0^\ast)}\right)E_0(0),
\end{eqnarray*}
and, consequently, since the map $t \mapsto E_0(t)$ is non-increasing, we deduce
\begin{eqnarray}\label{eqt3}
E_0(T) \leq \gamma_1 E_0(0),~\forall T > T_0,~ \hbox{ where }\gamma_1:= \left( \frac{1}{\tilde C_0 +1}\right),
\end{eqnarray}
and $\tilde {C}_0 = \tilde {C}_0(L, T_0^\ast)$. From the boundedness $||y(t)||_{H_0^1(\Omega)}\leq C\left(T,\|y_0\|_{\mathcal{X}}\right)$ one has $||y(T)||_{H^1(M)} \leq C_1(T)$, and as we have proceed above we conclude that
\begin{eqnarray}\label{eqt4}
E_0(2T) \leq \gamma_2 E_0(T),~\forall T > T_0,~\hbox{ where } \gamma_2:=\left( \frac{1}{\tilde C_1 +1}\right)
\end{eqnarray}
and $\tilde C_1 = \tilde C_1 (C_1(T),T_0^\ast)$. Thus, from (\ref{eqt3}) and (\ref{eqt4}) we arrive at
\begin{eqnarray*}
	E_0(2T) \leq (\gamma_1 \gamma_2)\, E_0(0),~~\forall T > T_0, \hbox{ with }\gamma_1,\gamma_2 <1,
\end{eqnarray*}
and recursively we obtain for all $n\in \mathbb{N}$, that
\begin{eqnarray}\label{eqt5}
E_0(nT) \leq (\gamma_1 \gamma_2 \cdots \gamma_n)\, E_0(0), ~\forall T > T_0, ~ \hbox{ with }\gamma_1,\gamma_2, \cdots, \gamma_n <1.
\end{eqnarray}

Thus, if we assume, by contradiction, that the map $t \mapsto E_0(t)$ is bounded from below, namely, if there exists $\alpha>0$ such that $E_0(t) \geq \alpha$ for all $t>0$, then from (\ref{eqt5}) it follows that $E_0(nT) \leq \gamma^n\, E_0(0)$ for some $\gamma<1$, and we obtain a contradiction.  Consequently $E_0(t)$ goes to zero when t goes to infinity. \quad $\square$

\medskip

From the above, we are adjusted with Liu and Rao final results \cite{Liu}, namely, uniform stability or just uniform and exponential decay rate estimates. However, they exploit the assumption (\ref{anabla}), looking for resolvent estimates, while in our case we are looking for global solutions in $H^1-$level bounded by $||u(t)||_{H_0^1(\Omega)}\leq C\left(\|y_0\|_{\mathcal{X}}\right)$.

\bibliographystyle{abbrv}
\bibliography{myreferences}
\end{document}